\patchcmd{\thebibliography}{\chapter*}{\section*}{}{}
\newcommand{\R}{\mathbb{R}}		
\newcommand{\beq}{\begin{equation}}		
\newcommand{\eeq}{\end{equation}}			
\newcommand{\beqq}{\begin{equation*}}	
\newcommand{\eeqq}{\end{equation*}}		
\newcommand{\id}{1\hspace{-0,9ex}1}
\renewcommand{\P}{\mathbb{P}}
\newcommand{\Id}{\text{Id}}
\newcommand{\B}{\mathfrak{B}} 
\newcommand{\F}{\mathcal{F}} 
\newcommand{\A}{\mathcal{A}} 
\newcommand{\Var}{\text{Var}}
\newcommand{\Cov}{\text{Cov}}
\newcommand{\x}{\text{\scalebox{0.62}{$\mathbb{X}$}}}
\newcommand{\X}{\mathbb{X}}
\newtheorem{theorem}{Theorem}[section]
\newtheorem{lemma}[theorem]{Lemma}
\newtheorem{definition}[theorem]{Definition}
\newtheorem{remark}[theorem]{Remark}
\newtheorem{notation}[theorem]{Notation}
\newtheorem{corollary}[theorem]{Corollary}
\newtheorem{assumption}[theorem]{Assumption}
\newtheorem{proposition}[theorem]{Proposition}
\newtheorem{stepp}{\noindent\bf{Step}}
\begin{document}
\pagestyle{headings} \thispagestyle{headings} \thispagestyle{empty}

\noindent\rule{15.812cm}{0.4pt}
\begin{center}
\textsc{Abstract Cauchy Problems in separable Banach Spaces driven by random Measures: Asymptotic Results in the finite extinction Case}
\end{center}
\begin{center}
	by
\end{center} 
\begin{center}
		\textsc{Alexander Nerlich\footnote{Affiliation: Ulm University}\footnote{Affiliation's address: 89081 Ulm, Helmholtzstr. 18, Germany}\footnote{Author's E-Mail: alexander.nerlich@uni-ulm.de}\footnote{Author's Phone: +49 176 6315 7836}\footnote{Author's ORCID: 0000-0001-7823-0648}}
\end{center}
\noindent\rule{15.812cm}{0.4pt}
\vspace{0.4cm}
\pagestyle{myheadings} 
\begin{center} 
	{\large ABSTRACT} 
\end{center} 
The aim of this paper is to prove the strong law of large numbers (SLLN) as well as the central limit theorem (CLT) for a class of vector-valued stochastic processes which arise as solutions of the stochastic evolution inclusion
\begin{align*}
 \eta(t,z) N_{\Theta}(dt \otimes z)\in dX(t)+\A X(t)dt,
\end{align*}
where $\A$ is a multi-valued operator and $N_{\Theta}$ is the counting measure induced by a point process $\Theta$.
The SLLN and the CLT will be proven not only for real-valued, but also for vector-valued functionals and the applicability of these results to the (weighted) $p$-Laplacian evolution equation (for "small" $p$) will be demonstrated.\\
The key assumption needed in this paper is that the nonlinear semigroup arising from the multi-valued operator $\A$ extincts in finite time.\\
\textbf{Mathematical Subject Classification (2010).} 47J35, 60H15, 35B40, 60F05, 60F15 \\ 
\textbf{Keywords.} Nonlinear (stochastic) evolution equation, Pure jump noise, Strong law of large numbers, Central limit theorem, Weighted $p$-Laplacian evolution equation, Anscombe's theorem

\section{Introduction}

Existence and uniqueness results for the stochastic evolution inclusion
\begin{align}
\tag{ACPRM}
\label{acprm}
\eta(t,z) N_{\Theta}(dt \otimes z)\in dX(t)+\A X(t)dt,
\end{align}
have been proven in \cite{ich2}. Moreover, a representation formula for the solutions was established there. In the current paper, we deduce intriguing asymptotic results with the aid of this representation formula.\\ 

Before stating our results as well as the required assumptions in more detail, let us give this formula. To this end, let $(V,||\cdot||_{V})$ denote a real, separable Banach space and let $\A:D(\A)\rightarrow 2^{V}$ be a densely defined, m-accretive operator. Then it is well known that the initial value problem
\begin{align} 
\label{acp}
0 \in u^{\prime}(t)+\A u(t),~\text{a.e. } t \in (0,\infty),~u(0)=v,
\end{align}
has for any $v \in V$ a uniquely determined mild solution, denoted by $T_{\A}(\cdot)v:[0,\infty)\rightarrow V$, see  \cite[Prop. 3.7]{BenilanBook}.\\
Now, introduce a complete probability space $(\Omega,\F,\P)$ and let $(\beta_{m})_{m \in \mathbb{N}}$ and $(\eta_{m})_{m \in \mathbb{N}}$ be $(0,\infty)$-valued and $V$-valued sequences of random variables, respectively. In addition, let $x$ be a $V$-valued random variable, introduce $\alpha_{m}:=\sum \limits_{k=1}\limits^{m}\beta_{m}$, $\alpha_{0}:=0$, $\x_{x,0}:=x$ and $\x_{x,m}:=T_{\A}(\beta_{m})\x_{x,m-1}+\eta_{m}$ for all $m \in \mathbb{N}$. Then the stochastic process $\X_{x}:[0,\infty)\times \Omega \rightarrow V$ defined by
\begin{align}
\label{intro_repr}
\X_{x}(t):= \sum \limits_{m=0}\limits^{\infty}T_{\A}((t-\alpha_{m})_{+})(\x_{x,m})\id_{[\alpha_{m},\alpha_{m+1})}(t),
\end{align}
is for some drift $\eta$ and some random measure $N_{\Theta}$ the uniquely determined mild solution of (\ref{acprm}), starting at $x$, if  $\A$ fulfills certain regularity assumptions and $(\beta_{m})_{m \in \mathbb{N}}$ is i.i.d, see \cite[Theorem 3.13 and Remark 3.14]{ich2}.\\

Even though this representation formula does not make it possible to explicitly calculate the solution of (\ref{acprm}), it still gives a direct link between the solution of the deterministic Cauchy problem (\ref{acp}) and (\ref{acprm}). Consequently, it raises the question, how the asymptotic properties of $T_{\A}$ and $\X_{x}$ are related. The probably strongest asymptotic property $T_{\A}$ can have, is that $T_{\A}(\cdot)v$ extincts in finite time, which is in our case managed by assuming that: There are constants $\kappa \in (0,\infty)$ and $\rho \in (0,1)$ such that 
\begin{align}
\label{intro_ex}
||T_{\A}(t)v||^{\rho}_{V_{1}} \leq (-\kappa t+||v||_{V_{1}}^{\rho})_{+}
\end{align}
for all $t \geq 0$ and $v \in V_{1}$, where $(V_{1},||\cdot||_{V_{1}})\subseteq V$ is another separable Banach space, invariant w.r.t. $T_{\A}$ and continuously injected into $V$. The reason why we introduce $V_{1}$ is to make the results more applicable, since it is quite common that it is possible to prove existence and uniqueness of mild solutions of (\ref{acp}) for all $v \in V$, but that the finite extinction property (\ref{intro_ex}) only holds on a subspaces.\\
The most important stochastic assumptions needed to achieve this, are that $(\beta_{m})_{m \in \mathbb{N}}$ and $(\eta_{m})_{m \in \mathbb{N}}$ are both i.i.d. sequences, which are independent of each other, independent of the initial $x$ and that $\beta_{m}$ is in some sense (to be made precise later) "larger" than $\eta_{m}$.\\ 
It will then be possible to show that, for a class of functionals $\Xi:V \rightarrow W$, where $(W,||\cdot||_{W})$ is another separable Banach space, we have
\begin{align}
\tag{SLLN}
\label{slln}
\lim \limits_{t \rightarrow \infty}\frac{1}{t} \int \limits_{0}\limits^{t} \Xi (\X_{x}(\tau))d\tau=\nu_{\Xi},
\end{align}
with probability one, where $\nu_{\Xi}\in W$ will be made precise later; and that if $(W,||\cdot||_{W})$ is in addition a type $2$ Banach space, we have
\begin{align}
\tag{CLT}
\label{clt}
\lim \limits_{t \rightarrow \infty} \frac{1}{\sqrt{t}}\left(\int \limits_{0}\limits^{t} \Xi (\X_{x}(\tau))d\tau-t\nu_{\Xi}\right)=Z,
\end{align}
in distribution, where $Z:\Omega \rightarrow W$ is a centered, Gaussian $W$-valued random variables, whose covariance will be made precise later.\\
Particularly, the class of functionals is sufficiently large, such that $\Xi(\X_{x}(t))$ in (\ref{slln}) and (\ref{clt}) can be replaced by $\X_{x}(t)$. Moreover, $\Xi$ depends on another separable Banach space $(V_{2},||\cdot||_{V_{2}})\subseteq (V,||\cdot||_{V})$, with continuous injection and invariant w.r.t.  $T_{\A}$. This makes it possible to replace $\Xi(\X_{x}(t))$ in (\ref{slln}) and (\ref{clt}) by $||\X_{x}(t)||_{V_{2}}$.\\
All of these results are proven solely with the aid of the representation formula (\ref{intro_repr}); particularly, no precise notion of a solution of (\ref{acprm}) is required.\\

Moreover, our theoretical results will be applied to the weighted $p$-Laplacian evolution equation on an $L^{1}$-space, where $p \in I$ and $I \subseteq (1,2)$ is an interval to be specified later. (The usual $p$-Laplacian evolution equation is a special case of this equation, with the weight function being equal to one.) We will see that in this case all $L^{q}$-norms, where $q\in [1,\infty)$, are a valid choice for $||\cdot||_{V_{2}}$ and that (\ref{slln}) as well as (\ref{clt}) also hold for $\X_{x}$ itself.\\

The basic technique to prove these results is to introduce a certain sequence of stopping times $(\tau_{m})_{m\in \mathbb{N}}$, such that $\int \limits_{0}\limits^{\tau_{m}}\Xi (\X_{x}(\tau))d\tau$ can be decomposed into an i.i.d. sum; and then to use approximation techniques to replace $\tau_{m}$ by $t$.\\

Results like (\ref{slln}) and (\ref{clt}) are relatively rare in the field of nonlinear SPDEs; in particular, it is rare that it is possible to prove them for vector-valued functionals and not just for real-valued. Moreover, the only structural assumption needed regarding $V$ is that it is separable. Even though we also consider a triplet of 3 Banach spaces, $V_{2},V_{1},V$, we do not assume that these Banach spaces form a Gelfand triplet, but simply that all of them are separable and that the injections are continuous.\\

There are besides the weighted $p$-Laplacian example we consider, many other nonlinear semigroups which extinct in finite time. For another concrete example, see \cite[Chapter 4]{acmbook} and for a general survey on the finite extinction property, containing many examples, including the (unweighted) $p$-Laplacian case, see \cite{extinct}.\\ 
Proving asymptotic results of $\X_{x}$ under the assumption that $T_{\A}$ fulfills other decay estimates than (\ref{intro_ex}) is the subject of current research.\\

Before embarking on the endeavor ahead of us, let us outline this paper's structure: All notations and basic results used throughout this paper are stated in Section \ref{basicdef}. Section \ref{section_asymptotics} is this paper's core; a precise statement of all assumptions needed and proofs of the general results mentioned in the introduction are given there. Finally, Section \ref{plaplace} deals with the application of these results to the weighted $p$-Laplacian evolution equation.\\
Section \ref{section_asymptotics} also contains a type $2$ Banach space version of Anscombe's CLT, which we did not find in the literature and might be of independent interest to some readers. It can be found in Theorem \ref{theorem_anscombeclt} and is written as self-contained as possible.\\

\section{Notation and preliminary Results}
\label{basicdef}
Firstly, let us state some functional analytic preliminaries: Whenever $(U,||\cdot||_{U})$ is a Banach space, $U^{\prime}$ denotes its dual and $\langle \cdot,\cdot\rangle_{U}$ the duality between $U$ and $U^{\prime}$.\\ 
Throughout this section, $(U,||\cdot||_{U})$ denotes a separable real Banach space.\\ 
If $(K,\Sigma,\mu)$ is a $\sigma$-finite measure space, then $L^{q}(K,\Sigma,\mu;U)$ denotes, for any $q\in[1,\infty)$, the set of all (equivalence classes of) functions  $f:K\rightarrow U$ which are $\Sigma-\B(U)$-measurable and fulfill
\begin{align*}
\int  \limits_{K} ||f||_{U}^{q}d\mu < \infty,
\end{align*}
where $\B(T)$ always denotes the Borel $\sigma$-algebra of a topological space $(T,\mathcal{T})$. For any $f \in L^{q}(K,\Sigma,\mu;U)$, the integral $\int \limits_{K}fd\mu$ is understood as a Bochner integral; for an introduction to Bochner integrability, see \cite[Section 2.1]{SIBS}.\\ 
Now we also need some results regarding nonlinear semigroups. The reader is referred to \cite{BenilanBook} for a comprehensive introduction to this topic. Moreover, \cite{acmbook} deals with existence, uniqueness and asymptotic results for many initial value problems; and this book's appendix contains a more concise introduction to nonlinear semigroups.\\
Now, let $\mathcal{A}:U\rightarrow 2^{U}$ be a multi-valued operator, then we introduce $D(\mathcal{A}):=\{u\in U: \mathcal{A}u\neq \emptyset\}$ and we call this operator single-valued if $\mathcal{A}u$ contains precisely one element for all $u\in D(\mathcal{A})$. Moreover, instead of $\mathcal{A}:U\rightarrow 2^{U}$ we may write $\mathcal{A}:D(\mathcal{A})\rightarrow 2^{U}$. In addition, $G(\A):=\{(v,\hat{v}):~v \in D(\A),~\hat{v}\in \A v\}$ is the graph of $\A$. By identifying an operator with its graph, we may simply write $(v,\hat{v})\in \A$ instead of $v \in D(\A)$ an $\hat{v}\in \A v$.\\
Moreover, $\A$ is called accretive, if $||u_{1}-u_{2}||_{U}\leq ||u_{1}-u_{2}+\alpha(\hat{u}_{1}-\hat{u}_{2})||_{U}$ for all $\alpha>0$, $(u_{1},\hat{u}_{1})\in \A$ and $(u_{2},\hat{u}_{2})\in \A$; m-accretive, if it is accretive and $R(Id+\alpha\mathcal{A})=U$ for all $\alpha>0$; and densely defined, if $\overline{D(\mathcal{A})}=U$.\\
 Using these simple definitions enables us to invoke the following well-known result:

\begin{remark}\label{remark_msex} Let $\mathcal{A}:D(\mathcal{A})\rightarrow 2^{U}$ be m-accretive and densely defined. Then, for any $u\in U$, the initial initial value problem
	\begin{align}
	\label{remark_mseq}
	0 \in v^{\prime}(t)+\mathcal{A}v(t),~\text{for a.e. }t\in (0,\infty),~v(0)=u,
	\end{align}
	has precisely one mild solution. The reader is referred to \cite[Prop. 3.7]{BenilanBook} for a proof and to \cite[Definition 1.3]{BenilanBook} for the definition of mild solution.\\
	For a given m-accretive and densely defined operator $\mathcal{A}:D(\mathcal{A})\rightarrow 2^{U}$, we denote for each $u \in U$ by $T_{\mathcal{A}}(\cdot)u:[0,\infty)\rightarrow U$ the uniquely determined mild solution of (\ref{remark_mseq}). It is well known (see \cite[Theorem 3.10]{BenilanBook} and \cite[Theorem 1.10]{BenilanBook}) that the family of mappings $(T_{\A}(t))_{t \geq 0}$ forms a jointly continuous, contractive semigroup, i.e. it fulfills
	\begin{enumerate}
		\item semigroup property: $T_{\A}(0)u=u$ and $T_{\A}(t+h)u=T_{\A}(t)T_{\A}(h)u$ for all $t,h \in [0,\infty)$ and $u \in U$
		\item joint-continuity: $[0,\infty)\times U \ni (t,u) \mapsto T_{\A}(t)u$ is a continuous map, and
		\item contractivity: $||T_{\A}(t)u_{1}-T_{\A}(t)u_{2}||_{U}\leq ||u_{1}-u_{2}||_{U}$ for all $t \in [0,\infty)$. $u_{1},~u_{2}\in U$.
	\end{enumerate}
	In the sequel, we refer to the family of mappings $(T_{\A}(t))_{t\geq 0}$ as the semigroup associated to $\A$.
\end{remark} 

\begin{remark}\label{remark_measuc0sg}  Let $\mathcal{A}:D(\mathcal{A})\rightarrow 2^{U}$ be m-accretive and densely defined. As $[0,\infty) \times U \ni (t,u)\mapsto T_{\A}(t)u$ is continuous it is a fortiori $\mathfrak{B}([0,\infty)\times U)$-$\mathfrak{B}(U)$-measurable. Moreover, by separability we have $\mathfrak{B}([0,\infty)\times U)= \mathfrak{B}([0,\infty))\otimes \mathfrak{B}(U)$, see \cite[page 244]{Billingsley}; which gives that this map is $\mathfrak{B}([0,\infty))\otimes \mathfrak{B}( U)$-$\mathfrak{B}(U)$-measurable.
\end{remark}

\begin{definition} Let $\mathcal{A}:D(\mathcal{A})\rightarrow 2^{U}$ be m-accretive and densely defined. Moreover, let $\tilde{U}\subseteq U$. Then we say that $\tilde{U}$ is an invariant set w.r.t. $T_{\A}$, if $T_{\A}(t)\tilde{u} \in \tilde{U}$ for all $t \in [0,\infty)$ and $\tilde{u}\in\tilde{U}$.
\end{definition}

Now let us proceed with the stochastic preliminaries, which are mainly concerned with vector-valued random variables, i.e. random variables taking values in a (separable) Banach space. The reader is referred to \cite{PIBS} for a comrepehnsive introduction to this topic.\\
Throughout everything which follows $(\Omega,\F,\P)$ denotes a complete probability space. Moreover, we introduce the short cut notation $L^{q}(\Omega,\F,\P;U):=L^{q}(\Omega;U)$ for all $q \in [1,\infty)$. In addition, if $U=\R$ we may simply write $L^{q}(\Omega)$. Furthermore, $\mathcal{M}(\Omega;U)$ denotes the space of all mappings $Y:\Omega \rightarrow U$ which are $\F$-$\B(U)$-measurable. We may also refer to the elements of $\mathcal{M}(\Omega;U)$ as $U$-valued random variables. Moreover, if $Y_{i}$ is a $U_{i}$-valued random variable for each $i \in I$, where $I$ is an arbitrary index set and the $U_{i}$'s are separable Banach spaces, then $\sigma(Y_{j};j \in I)\subseteq \F$ is the smallest  $\sigma$-Algebra, such that each $Y_{i}$ is $\sigma(Y_{j};j \in I)-\B(U_{i})$-measurable. In addition, $\sigma_{0}(Y_{j};j \in I)$ denotes its completion, i.e. 
\begin{align*}
\sigma_{0}(Y_{j};j \in I):=\{A \in \F:~ \exists B \in \sigma(Y_{j};j \in I)\text{, such that } \P(A \Delta B)=0 \},
\end{align*}
where $\Delta$ denotes the symmetric difference. It is easily verified that the right-hand-side of the previous equation is indeed a $\sigma$-Algebra and the smallest one containing all $\P$-null-sets as well as all elements of $\sigma(Y_{j};j \in I)$. Moreover, it is well known that an $Y \in \mathcal{M}(\Omega;U)$ is independent of a $\sigma$-algebra, if and only if it is independent of the $\sigma$-algebra's completion.\\ 
Now let us recall the notations regarding Gaussian (vector-valued) random variables and convergence in distribution, needed in the sequel:

\begin{remark} The separable Banach space $(U,||\cdot||_{U})$ is said to be of type $2$, if: There is a constant $C>0$ such that for all $n \in \mathbb{N}$, $X_{1},...,X_{n} \in L^{2}(\Omega;U)$ which are centered and independent, we have
	\begin{align*}
	\mathbb{E} \left|\left|\sum \limits_{k=1}\limits^{n}X_{k}\right|\right|_{U}^{2} \leq C \sum \limits_{k=1}\limits^{n}\mathbb{E}||X_{k}||_{U}^{2}.
	\end{align*}
	The main feature of such Banach spaces is that these are precisely the Banach spaces where every centered, square integrable i.i.d. sequence still fulfills the CLT, see \cite[Theorem 10.5]{PIBS}.\\
	Now let $Y\in \mathcal{M}(\Omega;U)$. Then $Y$ is called Gaussian, if $\langle Y,\psi\rangle_{U}$ is Gaussian for all $\psi \in U^{\prime}$. (Note that by this definition constant random variables are Gaussian as well.) In addition, for a (not necessarily Gaussian) random variable $Y\in L^{2}(\Omega;U)$, we call the mapping $\Cov_{U}(Y):U^{\prime}\times U^{\prime}\rightarrow \mathbb{R}$, where
	\begin{align*}
	\Cov_{U}(Y)(\psi_{1},\psi_{2}):=\mathbb{E}(\langle Y-\mathbb{E}Y,\psi_{1}\rangle_{U}\langle Y-\mathbb{E}Y,\psi_{2}\rangle_{U}),~\forall \psi_{1},~\psi_{2}\in U^{\prime},
	\end{align*} 
	the covariance of $Y$. It is plain to verify that the right-hand-side expectation in the preceding equation indeed exists.\\ 
	Moreover, if $Y\in \mathcal{M}(\Omega;U)$ is Gaussian, then particularly $Y \in L^{2}(\Omega;U)$, see \cite[p. 5]{PR}. In addition, analogously to the real-valued case, the distribution of $Y$ is still uniquely determined by $\mathbb{E}Y$ and $\Cov_{U}(Y)$, see \cite[p. 5]{PR}.\\
	In the sequel, it will be written $Y \sim N_{U}(\mu,Q)$ whenever $Y\in \mathcal{M}(\Omega;U)$ is Gaussian, with mean $\mu$ and covariance $Q$. Of course, if $U=\mathbb{R}$ this is abbreviated by $N(\mu,\sigma^{2})$, where $\sigma^{2}:=Q(\Id,\Id)$ is the variance of $Y$. In addition, for any $Y \in \mathcal{M}(\Omega;U)$, $\P_{Y}:\B(U)\rightarrow [0,1]$ denotes its law.\\
	Last but not least, let us remark, that as usually we say that $\lim \limits_{m\rightarrow \infty}Y_{m}=Y$ in distribution, where $Y_{m},~Y\in \mathcal{M}(\Omega;U)$, if $\lim \limits_{m \rightarrow \infty}\mathbb{E}f(Y_{m})=\mathbb{E}f(Y)$, for all $f:U\rightarrow \mathbb{R}$ which are continuous and bounded.
\end{remark}

Finally, let us spend some words on the stochastic process which is the central object of this paper:

\begin{definition} Let $(\beta_{m})_{m \in \mathbb{N}}$, where $\beta_{m}:\Omega \rightarrow (0,\infty)$, be a sequence of real-valued random variables. Moreover, let $(\eta_{m})_{m \in \mathbb{N}}\subseteq\mathcal{M}(\Omega;U)$, introduce $\alpha_{m}:=\sum \limits_{k=1}\limits^{m}\beta_{k}$ for all $m \in \mathbb{N}$ and set $\alpha_{0}:=0$. Finally, let $x \in \mathcal{M}(\Omega;U)$ and let $\A:D(\A)\rightarrow 2^{U}$ be m-accretive and densely defined. Then the sequence $(\x_{x,m})_{m\in \mathbb{N}_{0}}$ defined by $\x_{x,0}:=x$ and
\begin{align*}
\x_{x,m}:=T_{\A}(\alpha_{m}-\alpha_{m-1})\x_{x,m-1}+\eta_{m}=T_{\A}(\beta_{m})\x_{x,m-1}+\eta_{m},~\forall m \in \mathbb{N},
\end{align*}
is called the sequence generated by $((\beta_{m})_{m \in \mathbb{N}},(\eta_{m})_{m \in \mathbb{N}},x,\A)$ in $U$. Moreover, the stochastic process $\X_{x}:[0,\infty)\times \Omega \rightarrow U$ defined by
\begin{align*}
\X_{x}(t):= \sum \limits_{m=0}\limits^{\infty}T_{\A}((t-\alpha_{m})_{+})(\x_{x,m})\id_{[\alpha_{m},\alpha_{m+1})}(t),~\forall t\geq 0.
\end{align*}
is called the process generated by $((\beta_{m})_{m \in \mathbb{N}},(\eta_{m})_{m \in \mathbb{N}},x,\A)$ in $U$.
\end{definition}
\interfootnotelinepenalty=10000
\begin{remark}\label{remark_Xmeascad} Let $(\x_{x,m})_{m\in \mathbb{N}_{0}}$ and $\X_{x}:[0,\infty)\times \Omega \rightarrow U$ be the sequence and the process generated by some $((\beta_{m})_{m \in \mathbb{N}},(\eta_{m})_{m \in \mathbb{N}},x,\A)$ in $U$. Then it follows easily from Remark \ref{remark_measuc0sg} that each $\x_{x,m}$ and each $\X_{x}(t)$ is $\F$-$\B(U)$-measurable and that $\X_{x}$ has almost surely c\`{a}dl\`{a}g paths. Consequently, this process is $\B([0,\infty))\otimes \F$-$\B(U)$-measurable\footnote{Actually, this only implies that there is a process indistinguishable of $\X_{x}$ which is $\B([0,\infty))\otimes \F$-$\B(U)$-measurable. However, we follow the common mathematical convention of identifying indistinguishable processes with each other.}, see \cite[Prop. 2.2.3]{SIBS}.
\end{remark}

\begin{remark} Let $(\x_{x,m})_{m\in \mathbb{N}_{0}}$ and $\X_{x}:[0,\infty)\times \Omega \rightarrow U$ be the sequence and the process generated by some $((\beta_{m})_{m \in \mathbb{N}},(\eta_{m})_{m \in \mathbb{N}},x,\A)$ in $U$. Moreover, assume $\eta_{m}=0$ for all $m \in \mathbb{N}$ almost surely. Then we have $\x_{x,m}=T_{\A}(\beta_{m})\cdot...\cdot T_{\A}(\beta_{1})x$ for all $m \in \mathbb{N}$ a.s., and thus, thanks to the semigroup property, we get $\x_{x,m}=T_{\A}(\alpha_{m})x$, for all $m \in \mathbb{N}_{0}$, almost surely. Consequently, employing the semigroup property once more, yields $\X_{x}(t)=T_{\A}(t)x$ for all $t \geq 0$, with probability one. This demonstrates that even for the most simple noise, i.e. $\eta_{m}=0$, one needs some assumptions regarding the asymptotic behavior of $T_{\A}$, to be able to prove asymptotic results like (\ref{slln}) or (\ref{clt}).
\end{remark}

\begin{remark}\label{lemma_meas1} Let $(\hat{U},||\cdot||_{\hat{U}})\subseteq (U,||\cdot||_{U})$ be another separable Banach space and assume that the injection $\hat{U}\hookrightarrow U$ is continuous. Then Lusin-Souslin's Theorem (see \cite[Theorem 15.1]{kechris}) yields \linebreak$f(B)\in \B(U)$ for all $B \in \B(\hat{U})$ and $f:\hat{U}\rightarrow U$ which are continuous and injective. Consequently, we get $\B(\hat{U})\subseteq \B(U)$. Particularly, for $|\cdot|_{\hat{U}}:U\rightarrow [0,\infty)$, with $|u|_{\hat{U}}:=||u||_{\hat{U}}$ for all $u \in \hat{U}$ and $|u|_{\hat{U}}:=0$ for all $u \in U \setminus \hat{U}$, we have that $|\cdot|_{\hat{U}}$ is $\B(U)$-$\B([0,\infty))$-measurable.\\
Hence, if $y:\Omega \rightarrow U$ is $\F$-$\B(U)$-measurable, with $\P(y\in \hat{U})=1$, then $||y||_{\hat{U}}$ is $\F$-$\B([0,\infty))$-measurable.
\end{remark}

\section{Asymptotic Results for abstract Cauchy Problems driven by random Measures}
\label{section_asymptotics} 
The purpose of this section is to prove the introductory mentioned results (\ref{slln}) and (\ref{clt}) . At first we will state the needed assumptions, as well as some additional notations. As this section is quite long, a detailed outline is given after all the assumptions and notations have been stated, see Remark \ref{remark_outline}. There the basic techniques which are employed to prove (\ref{slln})  and (\ref{clt})  are also described.\\

Throughout this section, $(V,||\cdot||_{V})$ denotes a real, separable Banach space; and $\A:D(\A)\rightarrow 2^{V}$ is a densely defined, m-accretive operator. In addition, $(T_{\A}(t))_{t\geq 0}$ denotes the semigroup associated to $\A$. Finally, the following functional analytic assumption is drawn: 

\begin{assumption}\label{assumption_fe} There are separable Banach spaces $(V_{1},||\cdot||_{V_{1}})$ and $(V_{2},||\cdot||_{V_{2}})$, with $V_{i}\subseteq V$, such that the injections $V_{i} \hookrightarrow V$ are continuous for $i=1,2$. In addition, the following assertions hold.
\begin{enumerate}
	\item $V_{i}$ is invariant w.r.t. $T_{\A}$ for $i=1,2$.
	\item There are constants $\kappa \in (0,\infty)$ and $\rho \in (0,1)$ such that $||T_{\A}(t)v||^{\rho}_{V_{1}} \leq (-\kappa t+||v||_{V_{1}}^{\rho})_{+}$ for all $t \geq 0$ and $v \in V_{1}$, where $(\cdot)_{+}:=\max(\cdot,0)$.
	\item $||T_{\A}(t)v||_{V_{2}}\leq ||v||_{V_{2}}$ for all $v \in V_{2}$.
\end{enumerate}
\end{assumption}
Throughout this entire section, Assumption \ref{assumption_fe} is assumed to hold and $(V_{1},||\cdot||_{V_{1}})$, $(V_{2},||\cdot||_{V_{2}})$ as well as $\kappa\in (0,\infty)$, $\rho\in (0,1)$ are as in this assumption.\\

Now let us proceed with the stochastic assumptions and notations. Throughout this section, let $(\eta_{m})_{m \in \mathbb{N}}$ and $(\beta_{m})_{m \in \mathbb{N}}$ denote i.i.d. sequences, where $\eta_{m}:\Omega \rightarrow V$ and $\beta_{m}:\Omega \rightarrow (0,\infty)$ are $\F$-$\B(V)$-measurable and $\F$-$\B((0,\infty))$-measurable, respectively.  In addition, assume that $(\eta_{m})_{m \in \mathbb{N}}$ and $(\beta_{m})_{m \in \mathbb{N}}$ are independent of each other. Finally, introduce $(\alpha_{m})_{m \in \mathbb{N}_{0}}$, where $\alpha_{m}:\Omega \rightarrow [0,\infty)$, by $\alpha_{0}:=0$ and
\begin{align*}
\alpha_{m}:=\sum\limits_{k =1}\limits^{m}\beta_{k},~\forall m \in \mathbb{N}.
\end{align*}
The final assumption needed, reads as follows:

\begin{assumption}\label{assumption_mb} Throughout this section, the following assertions hold for all $m \in \mathbb{N}$.
\begin{enumerate}
	\item $\eta_{m}\in V_{i}$ for $i=1,2$ with probability one.
	\item $\mathbb{E}||\eta_{m}||_{V_{2}}^{4}<\infty$, and there is a $\hat{\varepsilon}>0$, such that $\mathbb{E}||\eta_{m}||_{V_{1}}^{\rho(11+\hat{\varepsilon})}<\infty$ and $\mathbb{E}\beta_{m}^{11+\hat{\varepsilon}}<\infty$. 
	\item $-\kappa\mathbb{E}\beta_{m}+\mathbb{E}||\eta_{m}||_{V_{1}}^{\rho}<0$.
\end{enumerate}	
\end{assumption}

Throughout this section $\hat{\varepsilon}>0$ is as in the preceding assumption. Moreover, for any $x \in \mathcal{M}(\Omega;V)$, $(\x_{x,m})_{m\in \mathbb{N}_{0}}$ and $\X_{x}:[0,\infty)\times \Omega \rightarrow V$ denote the sequence and the process generated by \linebreak$((\beta_{m})_{m \in \mathbb{N}},(\eta_{m})_{m \in \mathbb{N}},x,\A)$ in $V$. 

\begin{notation}\label{notation_xi} We write $(\Xi,(W,||\cdot||_{W})) \in \text{SL}_{V_{2}}(V)$, whenever the following assertions hold.
	\begin{enumerate}
		\item $(W,||\cdot||_{W})$ is a separable Banach space.
		\item $\Xi:V\rightarrow W$ is $\B(V)-\B(W)$-measurable.
		\item $\Xi$ is sub-linear in the following sense: There are constants $c_{1},~c_{2}\in [0,\infty)$ such that\linebreak $||\Xi(v)||_{W}\leq c_{1}||v||_{V_{2}}+c_{2}$, for all $v \in V_{2}$. 
	\end{enumerate}
\end{notation}

\begin{definition}\label{definition_basic} A mapping $x:\Omega \rightarrow V$ is called an independent initial value leading to extinction, if the following assertions hold.
\begin{enumerate}
	\item $x \in \mathcal{M}(\Omega;V)$.
	\item $x \in V_{i}$ for $i=1,2$ with probability one. 
	\item $\mathbb{E}||x||^{2\rho}_{V_{1}} < \infty$. 
	\item $x$ is jointly independent of $(\beta_{m})_{m \in \mathbb{N}}$ and $(\eta_{m})_{m \in \mathbb{N}}$.
\end{enumerate}
Moreover, if $x:\Omega \rightarrow V$ is an independent initial leading to extinction, we denote by $(e_{x}(n))_{n \in \mathbb{N}}$, where $e_{x}(n):\Omega \rightarrow \mathbb{N}\cup \{\infty\}$, the sequence of extinction times, defined by
\begin{enumerate}\setcounter{enumi}{3}
	\item $e_{x}(1):= \min (m \in \mathbb{N}:~T_{\A}(\beta_{m})\x_{x,m-1}=0)$ and
	\item $e_{x}(n):= \min \big(m \in \mathbb{N}:~T_{\A}(\beta_{m})\x_{x,m-1}=0,~m>e_{x}(n-1)\big)$ for all $n \in \mathbb{N}\setminus\{1\}$.
\end{enumerate}
Finally, introduce the filtrations $(\F^{x}_{j})_{j \in \mathbb{N}}$ and $(\tilde{\F}^{x}_{m})_{m \in \mathbb{N}_{0}}$, by
\begin{enumerate}\setcounter{enumi}{5}
	\item $\F^{x}_{1}:=\sigma_{0}(x,\beta_{1})$, $\tilde{\F}^{x}_{0}:=\sigma_{0}(x)$ and
	\item $\F^{x}_{j}:=\sigma_{0}(x,\beta_{1},...,\beta_{j},\eta_{1},...,\eta_{j-1})$ for all $j \in \mathbb{N}\setminus\{1\}$ and $\tilde{\F}^{x}_{m}:=\sigma_{0}(x,\beta_{1},...,\beta_{m},\eta_{1},...,\eta_{m})$ for all $m \in \mathbb{N}$.
\end{enumerate}
\end{definition}

\begin{remark}\label{remark_outline} Let $x \in \mathcal{M}(\Omega;V)$ be an independent initial leading to extinction and $\Xi \in SL_{V_{2}}(V)$. The centerpiece of the proof of the SLLN as well as the CLT, which are both proven in Theorem \ref{theorem_sllncltmain}, is the fact that the sequence $\left(\int \limits_{\alpha_{e_{x}(n)}}\limits^{\alpha_{e_{x}(n+1)}}\Xi(\X_{x}(\tau))d\tau\right)_{n\in \mathbb{N}}$ is i.i.d., square integrable and for each $n \in \mathbb{N}$ in distribution equal to $\int \limits_{0}\limits^{\alpha_{e_{\overline{x}}(1)}}\Xi(\X_{\overline{x}}(\tau))d\tau$, where $\overline{x}\in \mathcal{M}(\Omega;V)$ is specified in Remark \ref{remark_xbar}.\\
Before one can prove these results, one of course needs that $\P(e_{x}(n)<\infty,~\forall n \in \mathbb{N})=1$ and that the occurring integrals exist and are well-defined, which is subject to Proposition \ref{prop_extinction} and Lemma \ref{lemma_Xbasicprop}. The stated i.i.d. and square integrability assertions are then proven in Proposition \ref{prop_iid} and Lemma \ref{lemma_squareintegrability}.\\
Even though $\left(\int \limits_{\alpha_{e_{x}(n)}}\limits^{\alpha_{e_{x}(n+1)}}\Xi(\X_{x}(\tau))d\tau\right)_{n\in \mathbb{N}}$ is i.i.d., it remains so far open how one gets from there to Theorem \ref{theorem_sllncltmain}. A similar obstacle occurs for discrete time Markov chains possessing an atom; and the technique we employ to overcome it is somehow similar to the one used in \cite[Theorems 17.2.1 and 17.2.2]{tweety}. It is just "somehow" similar, since we are not in discrete time, consider vector-valued instead of real-valued functionals and last but not least $T_{\A}(\beta_{m})\x_{x,m-1}=0$, means $\x_{x,m}=\eta_{m}$, i.e. we do not stop the sequence $(\x_{x,m})_{m \in \mathbb{N}}$ at deterministic states, but at a "random state"; moreover, note that even though $(\x_{x,m})_{m \in \mathbb{N}}$ is a Markov chain, $\X_{x}$ is not necessarily\footnote{The author conjectures that the only nontrivial distribution of $\beta_{m}$ which turns $\X_x$ in a Markov process is the exponential one.} a Markov process. \\
Moreover, Corollary \ref{theorem_corvectorvalued} is a useful applications of Theorem \ref{theorem_sllncltmain} for special choices of $(\Xi,(W,||\cdot||_{W}))$. In addition, Theorem \ref{theorem_anscombeclt} is a vector-valued version of Anscombe's CLT.\\ 
The remaining results, which have not been mentioned explicitly in this remark, solely serve to keep the exposition more clean and the proofs more accessible, but are not of independent interest out of this section.
\end{remark} 

\begin{lemma}\label{lemma_xbasicprop} Let $x:\Omega \rightarrow V$ be an independent initial leading to extinction. Then all of the following assertions hold.
\begin{enumerate}
	\item $\x_{x,m}$ is $\tilde{\F}^{x}_{m}$-$\B(V)$-measurable for all $m \in \mathbb{N}_{0}$.
	\item $e_{x}(n)+1\leq e_{x}(n+1)$ and $e_{x}(n)\geq n$ for all $n \in \mathbb{N}$.
	\item $\{e_{x}(n)=j\}\in \F^{x}_{j}$ for all $n,j\in\mathbb{N}$.
\end{enumerate}
\end{lemma}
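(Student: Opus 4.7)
For part (i) I would proceed by induction on $m \in \mathbb{N}_0$. The base case is immediate: $\x_{x,0}=x$ is by definition $\sigma_0(x)=\tilde{\F}^x_0$-measurable. For the inductive step, assume $\x_{x,m-1}$ is $\tilde{\F}^x_{m-1}$-measurable, hence a fortiori $\tilde{\F}^x_m$-measurable, and note that $\beta_m$, $\eta_m$ are $\tilde{\F}^x_m$-measurable by construction of the filtration. Thus the map $\omega\mapsto(\beta_m(\omega),\x_{x,m-1}(\omega))$ is $\tilde{\F}^x_m$-$\B([0,\infty))\otimes\B(V)$-measurable (using that the product $\sigma$-algebra equals $\B([0,\infty)\times V)$, cf.\ the fact already invoked in Remark \ref{remark_measuc0sg}). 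Since $(t,v)\mapsto T_\A(t)v$ is $\B([0,\infty))\otimes\B(V)$-$\B(V)$-measurable by that same remark, composition yields that $T_\A(\beta_m)\x_{x,m-1}$ is $\tilde{\F}^x_m$-$\B(V)$-measurable, and adding $\eta_m$ (which is $\tilde{\F}^x_m$-measurable) preserves this, proving (i).

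For part (ii), both inequalities are essentially combinatorial. Since the sequences are $\mathbb{N}\cup\{\infty\}$-valued, the condition $m>e_x(n)$ in the definition of $e_x(n+1)$ forces $e_x(n+1)\geq e_x(n)+1$ on the set where $e_x(n)<\infty$, and trivially on $\{e_x(n)=\infty\}$ as well. Together with $e_x(1)\geq 1$, a straightforward induction then gives $e_x(n)\geq n$ for every $n \in \mathbb{N}$.

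For part (iii) I would induct on $n$, after the preliminary observation that for every $m\in\mathbb{N}$ the random variable $T_\A(\beta_m)\x_{x,m-1}$ is $\F^x_m$-$\B(V)$-measurable. This follows exactly as in (i), noting that $\x_{x,m-1}$ is $\tilde{\F}^x_{m-1}$-measurable and $\tilde{\F}^x_{m-1}\subseteq \F^x_m$, while $\beta_m$ is $\F^x_m$-measurable, so the composition with the jointly measurable semigroup map lives in $\F^x_m$. Consequently, $\{T_\A(\beta_m)\x_{x,m-1}=0\}\in\F^x_m\subseteq\F^x_j$ for every $m\leq j$. For $n=1$ this directly yields
\begin{equation*}
\{e_x(1)=j\}=\{T_\A(\beta_j)\x_{x,j-1}=0\}\cap\bigcap_{m=1}^{j-1}\{T_\A(\beta_m)\x_{x,m-1}\neq 0\}\in\F^x_j.
\end{equation*}
For the inductive step I would decompose
\begin{equation*}
\{e_x(n)=j\}=\bigcup_{k=n-1}^{j-1}\Bigl(\{e_x(n-1)=k\}\cap\{T_\A(\beta_j)\x_{x,j-1}=0\}\cap\bigcap_{m=k+1}^{j-1}\{T_\A(\beta_m)\x_{x,m-1}\neq 0\}\Bigr),
\end{equation*}
which, using the induction hypothesis $\{e_x(n-1)=k\}\in\F^x_k\subseteq\F^x_j$ together with the previous measurability remark for each $m\leq j$, lies in $\F^x_j$.

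The only step requiring any care is the joint measurability argument underlying (i) and the preliminary observation for (iii); once one has the product-$\sigma$-algebra identity from Remark \ref{remark_measuc0sg}, however, everything reduces to clean bookkeeping of which variables enter each $\sigma$-algebra, so I do not expect a genuine obstacle.
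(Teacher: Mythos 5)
Your proposal is correct and follows essentially the same route as the paper: induction for (i) using the joint measurability of $(t,v)\mapsto T_{\A}(t)v$ from Remark \ref{remark_measuc0sg}, the trivial observation for (ii), and induction on $n$ for (iii) by decomposing over the value of the previous extinction time. The only cosmetic difference is that you describe $\{e_{x}(n)=j\}$ directly as an intersection of events, whereas the paper first handles $\{e_{x}(n)\leq j\}$ and then takes set differences; both rest on the same measurability facts.
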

\begin{proof} Let us start by proving i) inductively. We have $\x_{x,0}=x$, which is obviously $\sigma_{0}(x)$-$\B(V)$-measurable. Now assume that i) holds for an $m \in \mathbb{N}_{0}$ and note that $\x_{x,m+1}=T_{\A}(\beta_{m+1})\x_{x,m}+\eta_{m+1}$. As $\x_{x,m}$ is by the induction hypothesis a fortiori $\tilde{\F}^{x}_{m+1}$-$\B(V)$-measurable and since $\beta_{m+1}$ is obviously $\tilde{\F}^{x}_{m+1}$-$\B([0,\infty))$-measurable, Remark \ref{remark_measuc0sg} yields that $T_{\A}(\beta_{m+1})\x_{x,m}$ is $\tilde{\F}^{x}_{m+1}$-$\B(V)$-measurable. As $\eta_{m+1}$ has this property as well, i) follows.\\
Now note that it is plain that $e_{x}(n)+1\leq e_{x}(n+1)$, which gives $e_{x}(n)\geq n$, since $e_{x}(1)\geq 1$, by definition. Consequently, ii) holds as well.\\
Proof of iii). This statement is proven inductively w.r.t. $n \in \mathbb{N}$. We have for any $j \in \mathbb{N}$ that
\begin{align*}
\{e_{x}(1)\leq j\}=\{\exists k \in \{1,...,j\}:~T(\beta_{k})\x_{x,k-1}=0\}=\bigcup \limits_{k=1}\limits^{j}\{T(\beta_{k})\x_{x,k-1}=0\} \in \F^{x}_{j},
\end{align*}
by Remark \ref{remark_measuc0sg} and i). Consequently, as $\{e_{x}(1)= j\}=\{e_{x}(1)\leq j\}\setminus \{e_{x}(1)\leq j-1\}$ and $\F_{j-1}^{x}\subseteq \F_{j}^{x}$,  iii) holds if $n=1$.\\
Now assume that iii) holds for an $n \in \mathbb{N}$. If $j<n+1$, we have $\{e_{x}(n+1)\leq j\}=\emptyset$, by ii). So let $j\geq n+1$. Note that on $\{e_{x}(n+1)\leq j\}$, we have $n\leq e_{x}(n)<j$, by ii).\\
Consequently, we have
\begin{align*}
\{e_{x}(n+1)\leq j\}=\bigcup\limits_{i=n}\limits^{j-1}\{\exists~k\in\{i+1,...,j\}:~T(\beta_{k})\x_{x,k-1}=0,~e_{x}(n)=i\}.
\end{align*}
Moreover, the induction hypothesis yields $\{e_{x}(n)=i\}\in \F_{i}^{x}\subseteq \F_{j}^{x}$, for all $i=n,...,j-1$ and combining Remark \ref{remark_measuc0sg} and i) gives $\{T(\beta_{k})\x_{x,k-1}=0\}\in \F_{k}^{x}\subseteq \F_{j}^{x}$ for all $k=n+1,...,j$.\\
Consequently, we get $\{e_{x}(n+1)\leq j\} \in \F_{j}^{x}$ for all $j \in \mathbb{N}$ and therefore also $\{e_{x}(n+1)= j\} \in \F_{j}^{x}$.
\end{proof}

\begin{lemma}\label{lemma_ineqx} Let $x:\Omega \rightarrow V$ be an independent initial leading to extinction. Then the following assertions hold.
\begin{enumerate}
	\item $\x_{x,m}\in V_{i}$ for all $m \in \mathbb{N}_{0}$ and $i \in \{1,2\}$ almost surely. 
	\item $||\x_{x,m}||_{V_{1}}^{\rho} \leq (-\kappa \beta_{m}+||\x_{x,m-1}||_{V_{1}}^{\rho})_{+}+||\eta_{m}||_{V_{1}}^{\rho}$ for all $m \in \mathbb{N}$ almost surely.
\end{enumerate}
\end{lemma}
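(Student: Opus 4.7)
The plan is to prove both assertions by induction on $m \in \mathbb{N}_0$, exploiting the recursive definition $\x_{x,m} = T_{\A}(\beta_m)\x_{x,m-1} + \eta_m$ together with the structural hypotheses on $V_1$, $V_2$ and $T_{\A}$ from Assumption \ref{assumption_fe} and the integrability/support hypotheses on $x$ and $(\eta_m)$ from Definition \ref{definition_basic} and Assumption \ref{assumption_mb}. Throughout, one should work on a single $\P$-null set obtained as the countable union of the exceptional sets in the inductive steps, so that the "for all $m$" statement is in fact simultaneous.

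For (i), fix $i \in \{1,2\}$ and argue by induction. The base case $m=0$ is immediate from $\x_{x,0} = x$ and Definition \ref{definition_basic}(ii), which asserts $x \in V_i$ almost surely. For the induction step, assume $\x_{x,m-1} \in V_i$ on some event of full measure. On that event, invariance of $V_i$ under $T_{\A}$ (Assumption \ref{assumption_fe}(i)) gives $T_{\A}(\beta_m)\x_{x,m-1} \in V_i$, and Assumption \ref{assumption_mb}(i) gives $\eta_m \in V_i$ almost surely; since $V_i$ is a linear subspace of $V$, the sum $\x_{x,m}$ lies in $V_i$ on the intersection, which still has full measure. Taking a countable intersection over $m$ yields the simultaneous statement.

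For (ii), note first that on the a.s. event from (i), the quantities $\|\x_{x,m-1}\|_{V_1}$, $\|T_{\A}(\beta_m)\x_{x,m-1}\|_{V_1}$ and $\|\eta_m\|_{V_1}$ are all finite and well-defined. On this event one may apply the triangle inequality in $V_1$ and then the elementary subadditivity $(a+b)^\rho \le a^\rho + b^\rho$ (valid for $a,b\ge 0$ and $\rho\in(0,1)$ by concavity of $t\mapsto t^\rho$) to obtain
\begin{align*}
\|\x_{x,m}\|_{V_1}^\rho
&= \|T_{\A}(\beta_m)\x_{x,m-1} + \eta_m\|_{V_1}^\rho \\
&\le \bigl(\|T_{\A}(\beta_m)\x_{x,m-1}\|_{V_1} + \|\eta_m\|_{V_1}\bigr)^\rho \\
&\le \|T_{\A}(\beta_m)\x_{x,m-1}\|_{V_1}^\rho + \|\eta_m\|_{V_1}^\rho.
\end{align*}
The finite-extinction bound from Assumption \ref{assumption_fe}(ii), applied to $v = \x_{x,m-1} \in V_1$ and $t = \beta_m$, controls the first term by $(-\kappa\beta_m + \|\x_{x,m-1}\|_{V_1}^\rho)_+$, which yields (ii).

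There is no genuine obstacle here; the main point to handle with care is the bookkeeping of exceptional sets so that the "for all $m$" quantifier lives on a single full-measure event, which is achieved by taking the countable intersection of the almost-sure events produced at each induction step. The two analytic ingredients, namely subadditivity of $t\mapsto t^\rho$ for $\rho\in(0,1)$ and invariance/contraction properties of $V_1$, $V_2$ under $T_{\A}$, are both supplied directly by the standing assumptions.
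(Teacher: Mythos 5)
Your proof is correct and follows essentially the same route as the paper: induction using the invariance of $V_{i}$ under $T_{\A}$ together with $\eta_{m}\in V_{i}$ a.s.\ for part (i), and the triangle inequality, the subadditivity $(a+b)^{\rho}\le a^{\rho}+b^{\rho}$, and the finite-extinction bound of Assumption \ref{assumption_fe}.ii) for part (ii). The only difference is that you make the bookkeeping of the null sets explicit, which the paper leaves implicit.
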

\begin{proof} Let us start by proving i) inductively. The result is trivial for $m=0$. So assume it holds for an $m \in \mathbb{N}$. By the induction hypothesis, we have $\x_{x,m}\in V_{i}$ a.s., which gives $T(\beta_{m+1})\x_{x,m}\in V_{i}$ a.s. by Assumption \ref{assumption_fe}.i). As also $\eta_{m+1}\in V_{i}$ a.s. by Assumption \ref{assumption_mb}.i), we get $\x_{x,m+1}=T(\beta_{m+1})\x_{x,m}+\eta_{m+1}\in V_{i}$ almost surely and i) follows.\\
Now, let us prove ii). Appealing to Assumption \ref{assumption_fe}.ii), while having in mind i), gives 
\begin{align*}
||\x_{x,m}||_{V_{1}}^{\rho} \leq ||T(\beta_{m})\x_{x,m-1}||_{V_{1}}^{\rho}+||\eta_{m}||_{V_{1}}^{\rho} \leq (-\kappa \beta_{m}+||\x_{x,m-1}||_{V_{1}}^{\rho})_{+}+||\eta_{m}||_{V_{1}}^{\rho} 
\end{align*}
for all $m \in \mathbb{N}$ almost surely, where the well-known inequality $(a+b)^{\rho}\leq a^{\rho}+b^{\rho}$ for all $a,b\geq 0$, was used.
\end{proof}

\begin{lemma}\label{lemma_inclusionx} Let $x:\Omega \rightarrow V$ be an independent initial leading to extinction and introduce $m,n\in\mathbb{N}$, with $m<n$. Then the inclusion
\begin{align*}
\{-\kappa \beta_{k}+||\x_{x,k-1}||_{V_{1}}^{\rho}>0,~\forall k=m,...,n\} \subseteq \{-\kappa \sum \limits_{k=m}\limits^{n}\beta_{k}+\sum \limits_{k=m}\limits^{n-1}||\eta_{k}||_{V_{1}}^{\rho}+||\x_{x,m-1}||_{V_{1}}^{\rho}>0\}
\end{align*} 
holds up to a $\P$-null-set.
\end{lemma}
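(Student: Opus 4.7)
The plan is to combine the recursive inequality from Lemma \ref{lemma_ineqx}.ii) with the assumption that the positive part is actually active at each of the indices $k=m,\dots,n$. Fix an $\omega$ in the event on the left-hand side (after discarding the $\P$-null-set on which the conclusion of Lemma \ref{lemma_ineqx}.ii) may fail for some $k$). On this $\omega$ we have $-\kappa \beta_{k}(\omega)+||\x_{x,k-1}(\omega)||_{V_{1}}^{\rho}>0$ for each $k=m,\dots,n$, so the positive part $(-\kappa \beta_{k}+||\x_{x,k-1}||_{V_{1}}^{\rho})_{+}$ collapses to the bracket itself; hence
\begin{align*}
||\x_{x,k}||_{V_{1}}^{\rho} \leq -\kappa\beta_{k} + ||\x_{x,k-1}||_{V_{1}}^{\rho} + ||\eta_{k}||_{V_{1}}^{\rho}, \qquad k=m,\dots,n-1.
\end{align*}

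Next I would iterate this inequality (a straightforward finite induction on $k$) to telescope the contributions of $\x_{x,k-1}$, obtaining
\begin{align*}
||\x_{x,n-1}||_{V_{1}}^{\rho} \leq ||\x_{x,m-1}||_{V_{1}}^{\rho} - \kappa \sum_{k=m}^{n-1}\beta_{k} + \sum_{k=m}^{n-1}||\eta_{k}||_{V_{1}}^{\rho}.
\end{align*}
This is really just the single-step estimate applied $n-m$ times; no clever step is needed.

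Finally, I would use the remaining piece of information from the defining event, namely the inequality at index $k=n$, which reads $\kappa\beta_{n} < ||\x_{x,n-1}||_{V_{1}}^{\rho}$. Inserting the telescoped bound on $||\x_{x,n-1}||_{V_{1}}^{\rho}$ into this strict inequality and rearranging yields exactly
\begin{align*}
-\kappa \sum_{k=m}^{n}\beta_{k} + \sum_{k=m}^{n-1}||\eta_{k}||_{V_{1}}^{\rho} + ||\x_{x,m-1}||_{V_{1}}^{\rho} > 0,
\end{align*}
so $\omega$ lies in the event on the right-hand side. There is no real obstacle here: the only care needed is the bookkeeping that the positive-part inequality is used at the indices $k=m,\dots,n-1$ to telescope, while the index $k=n$ supplies the final strict inequality that transfers to the conclusion. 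The $\P$-null-set exception comes solely from the almost-sure qualifier in Lemma \ref{lemma_ineqx}.ii), applied to the finitely many indices $k=m,\dots,n-1$.
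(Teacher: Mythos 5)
Your proof is correct and rests on the same two ingredients as the paper's: the one-step inequality of Lemma \ref{lemma_ineqx}.ii) and the observation that on the left-hand event the bracket $-\kappa\beta_{k}+||\x_{x,k-1}||_{V_{1}}^{\rho}$ is positive at every index, so the positive part can be dropped and the inequality telescoped. The paper organizes this same computation as a backward induction on the length of the index window, written as a chain of set inclusions, whereas you telescope forward pointwise and insert the index-$n$ condition at the end; this is only a difference of presentation, not of substance.
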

\begin{proof} Fix $n \in \mathbb{N}\setminus \{1\}$ and let us prove inductively that
\begin{align}
\label{lemma_inclusionproofeq1}
\{-\kappa \beta_{k}+||\x_{x,k-1}||_{V_{1}}^{\rho}>0,\forall k=n-j,...,n\} \subseteq \{-\kappa \sum \limits_{k=n-j}\limits^{n}\beta_{k}+\sum \limits_{k=n-j}\limits^{n-1}||\eta_{k}||_{V_{1}}^{\rho}+||\x_{x,n-j-1}||_{V_{1}}^{\rho}>0\}
\end{align}
for all $j=1,...,n-1$ almost surely, which obviously yields the claim.\\ 
So let $j=1$. Firstly, invoking Lemma \ref{lemma_ineqx}.ii) gives $||\x_{x,n-1}||_{V_{1}}^{\rho}\leq (-\kappa \beta_{n-1}+||\x_{x,n-2}||_{V_{1}}^{\rho})_{+}+||\eta_{n-1}||_{V_{1}}^{\rho}$ a.s. and therefore
\begin{align}
\label{lemma_inclusionproofeq2}
\{-\kappa \beta_{n}+||\x_{x,n-1}||_{V_{1}}^{\rho}>0\}\subseteq \{-\kappa \beta_{n}+(-\kappa \beta_{n-1}+||\x_{x,n-2}||_{V_{1}}^{\rho})_{+}+||\eta_{n-1}||_{V_{1}}^{\rho}>0\}
\end{align}
almost surely. Using this yields
\begin{eqnarray*}
	& & ~
	\{-\kappa \beta_{k}+||\x_{x,k-1}||_{V_{1}}^{\rho}>0,~\forall k=n-1,...,n\} \\
	& \subseteq & ~ \{-\kappa \beta_{n-1}+||\x_{x,n-2}||_{V_{1}}^{\rho}>0,~-\kappa \beta_{n}+(-\kappa \beta_{n-1}+||\x_{x,n-2}||_{V_{1}}^{\rho})_{+}+||\eta_{n-1}||_{V_{1}}^{\rho}>0\}\\
    & \subseteq & ~  \{-\kappa \sum \limits_{k=n-1}\limits^{n}\beta_{k}+\sum \limits_{k=n-1}\limits^{n-1}||\eta_{k}||_{V_{1}}^{\rho}+||\x_{x,n-2}||_{V_{1}}^{\rho}>0\}
\end{eqnarray*}
almost surely, and consequently (\ref{lemma_inclusionproofeq1}) holds for $j=1$.\\
Now assume (\ref{lemma_inclusionproofeq1}) holds for a $j \in \{1,...,n-2\}$ (and w.l.o.g. that $n>2$). Firstly, using the induction hypothesis yields
\begin{eqnarray*}
& & ~
\{-\kappa \beta_{k}+||\x_{x,k-1}||_{V_{1}}^{\rho}>0,\forall k=n-(j+1),...,n\}\\
& \subseteq & ~ \{-\kappa \sum \limits_{k=n-j}\limits^{n}\beta_{k}+\sum \limits_{k=n-j}\limits^{n-1}||\eta_{k}||_{V_{1}}^{\rho}+||\x_{x,n-j-1}||_{V_{1}}^{\rho}>0\} \cap \{-\kappa \beta_{n-j-1}+||\x_{x,n-j-2}||_{V_{1}}^{\rho}>0\}
\end{eqnarray*}
almost surely. Appealing to Lemma \ref{lemma_ineqx}.ii) once more, yields
\begin{eqnarray*}
& & ~
\{-\kappa \sum \limits_{k=n-j}\limits^{n}\beta_{k}+\sum \limits_{k=n-j}\limits^{n-1}||\eta_{k}||_{V_{1}}^{\rho}+||\x_{x,n-j-1}||_{V_{1}}^{\rho}>0\}\\
& \subseteq & ~ \{-\kappa \sum \limits_{k=n-j}\limits^{n}\beta_{k}+\sum \limits_{k=n-j}\limits^{n-1}||\eta_{k}||_{V_{1}}^{\rho}+(-\kappa \beta_{n-j-1}+||\x_{x,n-j-2}||^{\rho}_{V_{1}})_{+}+||\eta_{n-j-1}||_{V_{1}}^{\rho}>0\}
\end{eqnarray*} 
almost surely. Finally, combining the former and the latter inclusion gives the claim.
\end{proof}

\begin{proposition}\label{prop_extinction} Let $x:\Omega \rightarrow V$ be an independent initial leading to extinction. Then we have
	\begin{align*}
	\P(e_{x}(i)<\infty,~\forall i \in \mathbb{N})=1.
	\end{align*}
\end{proposition}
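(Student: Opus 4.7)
The plan is to use induction on $i$. For the base case $i=1$, I would first note that Assumption \ref{assumption_fe}.ii) directly implies $T_{\A}(\beta_{m})\x_{x,m-1}=0$ whenever $\kappa\beta_{m}\geq \|\x_{x,m-1}\|_{V_{1}}^{\rho}$. Therefore
\begin{align*}
\{e_{x}(1)=\infty\}\subseteq \bigcap_{m=1}^{\infty}\{-\kappa\beta_{m}+\|\x_{x,m-1}\|_{V_{1}}^{\rho}>0\}
\end{align*}
up to a null set. Applying Lemma \ref{lemma_inclusionx} with $m=1$ and arbitrary $n\in\mathbb{N}$, each of the truncated intersections is contained (a.s.) in
\begin{align*}
\left\{-\kappa\sum_{k=1}^{n}\beta_{k}+\sum_{k=1}^{n-1}\|\eta_{k}\|_{V_{1}}^{\rho}+\|x\|_{V_{1}}^{\rho}>0\right\}.
\end{align*}
Dividing by $n$ and sending $n\to\infty$, the classical real-valued SLLN gives $\frac{1}{n}\sum_{k=1}^{n}\beta_{k}\to \E\beta_{1}$ and $\frac{1}{n}\sum_{k=1}^{n-1}\|\eta_{k}\|_{V_{1}}^{\rho}\to \E\|\eta_{1}\|_{V_{1}}^{\rho}$ almost surely, since both expectations are finite by Assumption \ref{assumption_mb}.ii). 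Moreover $\|x\|_{V_{1}}^{\rho}/n\to 0$ a.s.\ because $\E\|x\|_{V_{1}}^{2\rho}<\infty$. By Assumption \ref{assumption_mb}.iii), the limit is strictly negative almost surely, so the intersection over all $n$ has probability zero, i.e.\ $\P(e_{x}(1)<\infty)=1$.

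For the inductive step, assume $\P(e_{x}(i)<\infty)=1$. I would partition $\{e_{x}(i+1)=\infty\}$ according to the value $\ell$ of $e_{x}(i)$. On $\{e_{x}(i)=\ell\}$ we have $T_{\A}(\beta_{\ell})\x_{x,\ell-1}=0$, hence $\x_{x,\ell}=\eta_{\ell}$, and in particular $\|\x_{x,\ell}\|_{V_{1}}^{\rho}=\|\eta_{\ell}\|_{V_{1}}^{\rho}<\infty$ a.s. As in the base case,
\begin{align*}
\{e_{x}(i+1)=\infty\}\cap\{e_{x}(i)=\ell\}\subseteq \{-\kappa\beta_{k}+\|\x_{x,k-1}\|_{V_{1}}^{\rho}>0,\ \forall k>\ell\}\cap\{e_{x}(i)=\ell\}
\end{align*}
up to a null set. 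Applying Lemma \ref{lemma_inclusionx} with starting index $\ell+1$ and arbitrary $n>\ell$, these sets are contained (a.s.) in
\begin{align*}
\left\{-\kappa\sum_{k=\ell+1}^{n}\beta_{k}+\sum_{k=\ell+1}^{n-1}\|\eta_{k}\|_{V_{1}}^{\rho}+\|\eta_{\ell}\|_{V_{1}}^{\rho}>0\right\}.
\end{align*}
Dividing by $n$ and letting $n\to\infty$ gives exactly the same negative limit via SLLN (the finitely many omitted terms contribute $o(n)$), so this event has probability zero. Summing over $\ell\in\mathbb{N}$ and using the induction hypothesis yields $\P(e_{x}(i+1)<\infty)=1$. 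Finally, $\P(e_{x}(i)<\infty,\,\forall i)=1$ follows by countable intersection.

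The only subtle point is the inductive step: one must carefully handle the restart at the random time $\ell=e_{x}(i)$. The partitioning over $\{e_{x}(i)=\ell\}$ (an $\F^{x}_{\ell}$-event by Lemma \ref{lemma_xbasicprop}.iii)) sidesteps any strong Markov-type argument, reducing everything to a deterministic-index SLLN, which is why Lemma \ref{lemma_inclusionx} is formulated for arbitrary starting indices $m$ in the first place.
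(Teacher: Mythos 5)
Your proof is correct and follows essentially the same route as the paper's: the same inclusion chain via Assumption \ref{assumption_fe}.ii) and Lemma \ref{lemma_inclusionx}, and the same induction in which $\{e_{x}(i+1)=\infty\}$ is partitioned over the values of $e_{x}(i)$ and one uses that $\x_{x,\ell}=\eta_{\ell}$ on $\{e_{x}(i)=\ell\}$. The only difference is the concluding step: you invoke the real-valued SLLN to show that the intersection over $n$ of the containing events is a null set, whereas the paper computes the (eventually negative, linearly growing) mean of the drift and applies Chebyshev's inequality to show the probabilities of the truncated events tend to zero; both are valid, and your variant has the minor advantage of needing only first moments.
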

\begin{proof} It obviously suffices to prove that $e_{x}(i)<\infty$ a.s. for all $i \in \mathbb{N}$. This will be proven inductively.\\
Firstly, employing the $\sigma$-continuity of probability measures from above yields
\begin{align*}
\P(e_{x}(1)=\infty)=\lim \limits_{n \rightarrow \infty} \P(T(\beta_{k})\x_{x,k-1}\neq 0,~\forall k=1,...,n).
\end{align*}
Moreover, appealing to Lemma \ref{lemma_ineqx}.i) gives $\x_{x,k-1} \in V_{1}$ for all $k \in \mathbb{N}$ a.s. Consequently, Assumption \ref{assumption_fe}.ii) gives
\begin{align}
\label{prop_extinctionproofeq0}
\{T(\beta_{k})\x_{x,k-1}\neq 0\} \subseteq \{(-\kappa\beta_{k}+||\x_{x,k-1}||_{V_{1}}^{\rho})_{+}> 0\} = \{-\kappa\beta_{k}+||\x_{x,k-1}||_{V_{1}}^{\rho}> 0\},~\forall k \in \mathbb{N}
\end{align}
a.s. Using this, while having in mind Lemma \ref{lemma_inclusionx} yields
\begin{align}
\label{prop_extinctionproofeq1}
\P(e_{x}(1)=\infty) \leq \lim \limits_{n \rightarrow \infty} \P\left(-\kappa \sum \limits_{k=1}\limits^{n}\beta_{k}+\sum \limits_{k=1}\limits^{n-1}||\eta_{k}||_{V_{1}}^{\rho}+||x||_{V_{1}}^{\rho}>0\right).
\end{align}
Now note that $||x||_{V_{1}}^{\rho},\beta_{k},~||\eta_{k}||_{V_{1}}^{\rho}\in L^{2}(\Omega)$. Consequently, we can introduce
\begin{align*}
\nu_{n}:= \mathbb{E} \left(-\kappa \sum \limits_{k=1}\limits^{n}\beta_{k}+\sum \limits_{k=1}\limits^{n-1}||\eta_{k}||_{V_{1}}^{\rho}+||x||_{V_{1}}^{\rho}\right)= n\left(-\kappa \mathbb{E}(\beta_{1})+\mathbb{E}||\eta_{1}||_{V_{1}}^{\rho}\right)-\mathbb{E}||\eta_{1}||_{V_{1}}^{\rho}+\mathbb{E}||x||_{V_{1}}^{\rho}.
\end{align*}
Moreover, appealing to Assumption \ref{assumption_mb}.iii) yields $\nu_{n}<0$ for all $n$ sufficiently large. Consequently, by invoking (\ref{prop_extinctionproofeq1}) and employing Tschebyscheff's inequality, we get
\begin{eqnarray*}
\P(e_{x}(1)=\infty)
& \leq & \lim \limits_{n \rightarrow \infty}\frac{1}{\nu_{n}^{2}} \Var\left(-\kappa \sum \limits_{k=1}\limits^{n}\beta_{k}+\sum \limits_{k=1}\limits^{n-1}||\eta_{k}||_{V_{1}}^{\rho}+||x||_{V_{1}}^{\rho}\right) \\
& = & \lim \limits_{n \rightarrow \infty}\frac{1}{\nu_{n}^{2}} \left(\kappa^{2}\Var(\beta_{1})n +(n-1)\Var(||\eta_{1}||_{V_{1}}^{\rho})+(\Var||x||_{V_{1}}^{\rho})\right) \\
& = & ~ 0,
\end{eqnarray*}
which proves $\P(e_{x}(1)<\infty)=1$.\\
Now assume $e_{x}(i)<\infty$ a.s. for a given $i \in \mathbb{N}$. Then there is a set $M_{i}\subseteq \mathbb{N}$, such that $\P(e_{x}(i)\in M_{i})=1$ and $\P(e_{x}(i)=m)>0$ for all $m \in M_{i}$. This implies
\begin{align*}
\P(e_{x}(i+1)=\infty)=\sum \limits_{m\in M_{i}}\P(e_{x}(i+1)=\infty,~e_{x}(i)=m).
\end{align*}
Consequently, it suffices to prove that $\P(e_{x}(i+1)=\infty,~e_{x}(i)=m)=0$ for all $m \in \mathbb{M}_{i}$. So let $m \in M_{i}$ be given. Then we have
\begin{align*}
\P(e_{x}(i+1)=\infty,~e_{x}(i)=m) = \P(T(\beta_{k})\x_{x,k-1}\neq 0,~\forall k>m,~e_{x}(i)=m).
\end{align*}
Consequently, employing the $\sigma$-continuity of probability measures, (\ref{prop_extinctionproofeq0}) and Lemma \ref{lemma_inclusionx} gives 
\begin{align*}
\P(e_{x}(i+1)=\infty,~e_{x}(i)=m) \leq  \lim \limits_{n  \rightarrow \infty}\P\left(-\kappa \sum \limits_{k=m+1}\limits^{n}\beta_{k}+\sum \limits_{k=m+1}\limits^{n-1}||\eta_{k}||_{V_{1}}^{\rho}+||\x_{x,m}||_{V_{1}}^{\rho}>0,~e_{x}(i)=m\right)
\end{align*}
Moreover, it is plain that $\x_{x,m}=\eta_{m}$ on $\{e_{x}(i)=m\}$ which implies
\begin{eqnarray*}
\P(e_{x}(i+1)=\infty,~e_{x}(i)=m) 
& \leq & ~  \lim \limits_{n  \rightarrow \infty}\P\left(-\kappa \sum \limits_{k=m+1}\limits^{n}\beta_{k}+\sum \limits_{k=m+1}\limits^{n-1}||\eta_{k}||_{V_{1}}^{\rho}+||\eta_{m}||_{V_{1}}^{\rho}>0\right)\\
& = & ~\lim \limits_{n  \rightarrow \infty}\P\left(-\kappa \sum \limits_{k=1}\limits^{n-m}\beta_{k}+\sum \limits_{k=1}\limits^{n-m}||\eta_{k}||_{V_{1}}^{\rho}>0\right),
\end{eqnarray*}
where the last equality follows from the fact that the $\eta_{k}$'s as well as the $\beta_{k}$'s are i.i.d. and independent of each other. Analogously to the induction beginning, one now easily verifies by the aid of Tschebyscheff's inequality that the last limit converges to zero and the claim follows.
\end{proof}

\begin{remark}\label{lemma_xichoice} The following observations will be useful in the sequel. The easy proofs are left to the reader.
	\begin{enumerate}
		\item If $(\Xi,(W,||\cdot||_{W}))\in SL_{V_{2}}(V)$, then $(||\Xi||_{W},\mathbb{R})\in SL_{V_{2}}(V)$.
		\item If $(\Xi,(W,||\cdot||_{W}))\in SL_{V_{2}}(V)$ and $w \in W$, then $(\Xi_{w},(W,||\cdot||_{W}))\in SL_{V_{2}}(V)$,  where we set $\Xi_{w}(v):=\Xi(v)+w$ for all $v \in V$.
	\end{enumerate}
\end{remark}

\begin{lemma}\label{lemma_Xbasicprop} Let $(\Xi,(W,||\cdot||_{W}))\in SL_{V_{2}}(V)$ and let $x:\Omega \rightarrow V$ be an independent initial leading to extinction. Then we have
\begin{enumerate}
	\item $\P(\X_{x}(t)\in V_{i},~\forall t \geq 0)=1$, where $i\in \{1,2\}$.
	\item  The mapping defined by $[0,\infty) \times \Omega\ni(t,\omega)\mapsto \Xi(\X_{x}(t,\omega))$ is  $\B([0,\infty))\otimes\F $-$\B(W)$-measurable.
	\item $\P\left(\int \limits_{0}\limits^{t}||\Xi(\X_{x}(\tau))||_{W}d\tau<\infty,~\forall t \geq 0\right)=1$.
\end{enumerate}
Consequently, the Bochner integral $\int \limits_{0}\limits^{t}\Xi(\X_{x}(\tau))d\tau$ is (up-to a $\P$-null-set which is independent of $t$) well-defined, for all $t\geq 0$, and the stochastic process defined by $[0,\infty)\times \Omega \ni (t,w)\mapsto \int \limits_{0}\limits^{t}\Xi(\X_{x}(\tau,\omega))d\tau$ is $\F \otimes \B([0,\infty))$-$\B(W)$-measurable.
\end{lemma}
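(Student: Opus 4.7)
The plan is to prove the three assertions in turn, and then obtain the final conclusion by combining them with a standard Bochner/Fubini argument. Throughout, the guiding principle is to reduce everything on the random intervals $[\alpha_m,\alpha_{m+1})$ to properties of $\x_{x,m}$ already established in Lemma \ref{lemma_ineqx}.

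For (i), note that on each interval $[\alpha_m,\alpha_{m+1})$ the process is given by $\X_x(t) = T_\A(t-\alpha_m)\x_{x,m}$. Lemma \ref{lemma_ineqx}.i) yields $\P(\x_{x,m}\in V_i)=1$ for every $m\in\mathbb{N}_0$, so, intersecting over countably many indices, $\P(\x_{x,m}\in V_i \text{ for all } m)=1$. On this full-measure event, the invariance of $V_i$ under $T_\A$ (Assumption \ref{assumption_fe}.i)) propagates membership in $V_i$ to $\X_x(t)$ for every $t\geq 0$. For (ii), Remark \ref{remark_Xmeascad} already supplies $\B([0,\infty))\otimes\F$-$\B(V)$-measurability of $\X_x$; composing with $\Xi$, which is $\B(V)$-$\B(W)$-measurable by Notation \ref{notation_xi}.ii), gives the desired joint measurability.

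The substantive step is (iii). By the sub-linearity in Notation \ref{notation_xi}.iii) and part (i), $\|\Xi(\X_x(\tau))\|_W\leq c_1\|\X_x(\tau)\|_{V_2}+c_2$ almost surely for every $\tau\geq 0$. On $[\alpha_m,\alpha_{m+1})$, Assumption \ref{assumption_fe}.iii) gives $\|\X_x(\tau)\|_{V_2}=\|T_\A(\tau-\alpha_m)\x_{x,m}\|_{V_2}\leq \|\x_{x,m}\|_{V_2}$, where $\|\x_{x,m}\|_{V_2}$ is a well-defined real random variable thanks to Lemma \ref{lemma_ineqx}.i) together with Remark \ref{lemma_meas1}. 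Since $\beta_k>0$ with $\mathbb{E}\beta_1<\infty$, the strong law of large numbers yields $\alpha_m\to\infty$ almost surely, so for each deterministic $t\geq 0$ only finitely many intervals $[\alpha_m,\alpha_{m+1})$ intersect $[0,t]$, and consequently
\begin{align*}
\int_0^t\|\Xi(\X_x(\tau))\|_W\,d\tau \leq c_2 t + c_1\sum_{m:\,\alpha_m\leq t}(\alpha_{m+1}\wedge t-\alpha_m)\|\x_{x,m}\|_{V_2}<\infty
\end{align*}
almost surely for every $t\geq 0$. All three assertions hold on a single $\P$-null-set (obtained as the countable union of null sets from Lemma \ref{lemma_ineqx}, the event $\{\alpha_m\to\infty\}$, and the measurability issues), which is crucial to have (iii) simultaneously for all $t$.

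The concluding statement then follows routinely: on the full-measure event above, existence of the Bochner integral for every $t\geq 0$ is guaranteed by (ii) and (iii) via \cite[Section 2.1]{SIBS}, and the joint measurability of $(t,\omega)\mapsto\int_0^t\Xi(\X_x(\tau,\omega))\,d\tau$ is obtained by writing the integral as $\int_0^\infty\id_{[0,t]}(\tau)\Xi(\X_x(\tau,\omega))\,d\tau$ and applying Fubini's theorem for Bochner integrals to the jointly measurable integrand $(t,\tau,\omega)\mapsto\id_{[0,t]}(\tau)\Xi(\X_x(\tau,\omega))$. The main obstacle I expect is not any single technical computation but rather the bookkeeping needed to promote pointwise-in-$t$ almost sure statements into a single exceptional set valid uniformly in $t$; this is handled above by exploiting the countability of the index set $\{m\in\mathbb{N}_0\}$ together with the a.s. divergence $\alpha_m\to\infty$.
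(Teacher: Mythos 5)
Your proposal is correct and follows essentially the same route as the paper: parts (i)--(iii) are handled identically, by reducing to $\x_{x,m}\in V_i$ (Lemma \ref{lemma_ineqx}.i)), the invariance and $V_2$-contractivity from Assumption \ref{assumption_fe}, the joint measurability of $\X_x$ from Remark \ref{remark_Xmeascad}, and the a.s. divergence of $\alpha_m$, all collected on one exceptional null set. The only (harmless) divergence is in the final joint-measurability step, where you use a Fubini-type argument for $(t,\omega)\mapsto\int_0^\infty\id_{[0,t]}(\tau)\Xi(\X_x(\tau,\omega))\,d\tau$ while the paper instead invokes the a.s. continuity of $t\mapsto\int_0^t\Xi(\X_x(\tau))\,d\tau$ together with measurability for each fixed $t$ (via \cite[Lemma 2.2.4, Proposition 2.2.3]{SIBS}); both are standard and equally valid.
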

\begin{proof} Appealing to Lemma \ref{lemma_ineqx}.i) yields the existence of a $\P$-null-set $M_{1} \in \F$ such that
\begin{align}
\label{lemma_v2inv}
\x_{x,m}(\omega)\in V_{i},~\forall \omega \in \Omega\setminus M_{1},~m \in \mathbb{N}_{0},~i\in\{1,2\}.
\end{align}
Moreover, there is a $\P$-null-set $M_{2} \in \F$ such that $\lim \limits_{n \rightarrow \infty}\alpha_{n}(\omega)=\infty$ and $\alpha_{m}(\omega)<\alpha_{m+1}(\omega)$ for all $m \in \mathbb{N}_{0}$ and $\omega \in \Omega \setminus M_{2}$. Introduce $M:=M_{1}\cup M_{2}$.\\ 
Proof of i). For any fixed $\omega \in \Omega \setminus M$ and $t \in [0,\infty)$, there is an $m \in \mathbb{N}_{0}$ such that $t \in [\alpha_{m}(\omega),\alpha_{m+1}(\omega))$, which yields
\begin{align}
\label{lemma_v2inv2}
\X_{x}(t,\omega)=T(t-\alpha_{m}(\omega))\x_{x,m}(\omega) \in V_{i},~\forall i \in \{1,2\}
\end{align}
by Assumption \ref{assumption_fe}.i) and (\ref{lemma_v2inv}). Consequently, i) holds.\\
Moreover, as $\X_{x}$ is $\B([0,\infty))\otimes\F $-$\B(V)$-measurable (see Remark \ref{remark_Xmeascad}) and $\Xi$ is $\B(V)$-$\B(W)$-measurable, ii) holds as well.\\
Proof of iii). Let $t>0$ and $\omega \in \Omega \setminus M$ be arbitrary but fixed. Moreover, introduce $m \in \mathbb{N}$ such that $t<\alpha_{m}(\omega)$. Then (\ref{lemma_v2inv2}) enables us to conclude that there are constants $c_{1},~c_{2}\in [0,\infty)$ such that
\begin{eqnarray*}
	\int \limits_{0}\limits^{t} ||\Xi(\X_{x}(\tau,\omega))||_{W}d\tau
	& \leq & ~\sum \limits_{k=0}\limits^{m-1} \int \limits_{\alpha_{k}(\omega)}\limits^{\alpha_{k+1}(\omega)} ||\Xi(T(\tau-\alpha_{k}(\omega))\x_{x,k}(\omega))||_{W}d\tau \\
	& \leq & ~\sum \limits_{k=0}\limits^{m-1} \int \limits_{\alpha_{k}(\omega)}\limits^{\alpha_{k+1}(\omega)} c_{1}||T(\tau-\alpha_{k}(\omega))\x_{x,k}(\omega)||_{V_{2}}d\tau +c_{2}\beta_{k+1}(\omega)\\
	& \leq & ~ \sum \limits_{k=0}\limits^{m-1} \beta_{k+1}(\omega)(c_{1}||\x_{x,k}(\omega)||_{V_{2}}+c_{2}),
\end{eqnarray*}	
where the last inequality follows from Assumption \ref{assumption_fe}.iii). Consequently, iii) is proven, since the $\P$-null-set $M$ is indeed independent of $t\geq 0$.\\
Moreover, it follows from ii) that $[0,\infty) \ni t \mapsto \Xi(\X_{x}(\omega,t))$ is $\B([0,\infty))$-$\B(W)$-measurable for all $\omega \in \Omega$. This and (the proof of) iii) yields that the Bochner integral $\int \limits_{0}\limits^{t}\Xi(\X_{x}(\tau,\omega))d\tau$ exists for all $\omega \in \Omega \setminus M$ and $t \geq 0$.\\
Finally, \cite[Lemma 2.2.4]{SIBS} yields that $[0,\infty)\times \Omega \ni (t,\omega)\mapsto \int \limits_{0}\limits^{t}\Xi(\X_{x}(\tau,\omega))d\tau:=I(t,\omega)$ is (almost surely) continuous and that each $I(t)$ is $\F$-$\B(W)$-measurable. This implies that $I$ is $\F \otimes \B([0,\infty))$-$\B(W)$-measurable, by \cite[Proposition 2.2.3]{SIBS}. (The results in \cite{SIBS} are formulated for filtered probability spaces, chose the filtration which is constantly $\F$ while applying \cite[Lemma 2.2.4, Proposition 2.2.3]{SIBS}.) 
\end{proof}

The preceding lemma yields in particular that $\Omega \ni \omega \mapsto \int \limits_{a_{1}(\omega)}\limits^{a_{2}(\omega)}\Xi(\X_{x}(\omega,\tau))d\tau$ is well-defined and $\F$-$\B(W)$-measurable, whenever $a_{i}:\Omega \rightarrow [0,\infty)$ are $\F$-$\B([0,\infty))$-measurable, $x:\Omega \rightarrow V$ is an initial leading to extinction and $(\Xi,(W,||\cdot||_{W}))\in SL_{V_{2}}(V)$.\\
Our next goal is to establish that the sequence defined by $\left(\int \limits_{\alpha_{e_{x}(n)}}\limits^{\alpha_{e_{x}(n+1)}}\Xi(\X_{x}(\tau))d\tau\right)_{n \in \mathbb{N}}$ is i.i.d.

\begin{remark}\label{remark_stoppedfiltration}  Whenever $x:\Omega \rightarrow V$ is an independent initial leading to extinction, then $(\F^{x}_{e_{x}(n)})_{n \in \mathbb{N}}$ denotes the stopped filtration, defined by
\begin{align*}
\F^{x}_{e_{x}(n)}:=\{A\in \F:~A\cap \{e_{x}(n)=j\}\in \F^{x}_{j},~\forall j \in \mathbb{N}\},
\end{align*}
for all $n \in \mathbb{N}$.\\
Note that $(\F^{x}_{j})_{j \in \mathbb{N}}$ is trivially a filtration. Moreover, invoking Lemma \ref{lemma_xbasicprop}.iii) yields that each $e_{x}(n)$ is a stopping time w.r.t. $(\F^{x}_{j})_{j\in \mathbb{N}}$ and that $e_{x}(n)\leq e_{x}(n+1)$ for all $n \in \mathbb{N}$. Consequently, it is standard that each $\F^{x}_{e_{x}(n)}$ is indeed a $\sigma$-algebra and that $\F^{x}_{e_{x}(n)} \subseteq \F^{x}_{e_{x}(n+1)}$ for all $n \in \mathbb{N}$. In addition, it is plain that $(\F^{x}_{e_{x}(n)})_{n \in \mathbb{N}}$ inherits the completeness of $(\F^{x}_{j})_{j\in \mathbb{N}}$.
\end{remark}

\begin{remark}\label{remark_xbar} In all that follows $\overline{x}\in \mathcal{M}(\Omega;V)$, denotes a mapping fulfilling
\begin{enumerate}
	\item $\overline{x}=\eta_{1}$ in distribution and
	\item $\overline{x}$ is jointly independent of $(\eta_{m})_{m\in \mathbb{N}}$ and $(\beta_{m})_{m \in \mathbb{N}}$.
\end{enumerate}
Note that this implies $\overline{x}\in V_{i}$ a.s. for $i=1,2$. Moreover, as $0<2\rho<\rho(11+\hat{\varepsilon})$, we also have $\mathbb{E}||\overline{x}||_{V_{1}}^{2\rho}<\infty$, which gives that $\overline{x}$ is an independent initial leading to extinction.
\end{remark}

\begin{lemma}\label{lemma_iidpreparation} Let $(\Xi,(W,||\cdot||_{W}))\in SL_{V_{2}}(V)$ and $x:\Omega \rightarrow V$ be an independent initial leading to extinction. Then we have
\begin{align*}
\mathbb{E}\left(f\left(\int \limits_{\alpha_{e_{x}(n)}}\limits^{\alpha_{e_{x}(n+1)}}\Xi(\X_{x}(\tau))d\tau\right)\Big\vert \F^{x}_{e_{x}(n)} \right)= \mathbb{E}f\left(\int \limits_{0}\limits^{\alpha_{e_{\overline{x}}(1)}}\Xi(\X_{\overline{x}}(\tau))d\tau\right),
\end{align*}
for all $n \in \mathbb{N}$ and $f:W\rightarrow \mathbb{R}$ which are $\B(W)$-$\B(\R)$-measurable and bounded.
\end{lemma}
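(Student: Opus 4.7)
The proof plan rests on a shift argument combined with a partial expectation (``freezing'') lemma. The key observation is that on the event $\{e_{x}(n)=j\}$ one has $T_{\A}(\beta_{j})\x_{x,j-1}=0$, so $\x_{x,j}=\eta_{j}$, and the process $\X_{x}$ on $[\alpha_{j},\infty)$ depends only on the new initial value $\eta_{j}$ and the shifted sequences $(\beta_{j+k})_{k\geq 1}$, $(\eta_{j+k})_{k\geq 1}$, which together are independent of $\F^{x}_{j}$ and have the same joint law as $(\overline{x},(\beta_{k})_{k\geq 1},(\eta_{k})_{k\geq 1})$.

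First I would set up the shift explicitly. Fix $j\in\mathbb{N}$ and define $\tilde{x}:=\eta_{j}$, $\tilde{\beta}_{k}:=\beta_{j+k}$, $\tilde{\eta}_{k}:=\eta_{j+k}$ for $k\geq 1$, and let $(\tilde{\x}_{\tilde{x},k})_{k\in\mathbb{N}_{0}}$ and $\tilde{\X}_{\tilde{x}}$ be the sequence and process generated by $((\tilde{\beta}_{k}),(\tilde{\eta}_{k}),\tilde{x},\A)$. An induction on $k$ using the recursion $\x_{x,j+k}=T_{\A}(\beta_{j+k})\x_{x,j+k-1}+\eta_{j+k}$ yields, on $\{e_{x}(n)=j\}$, the identity $\tilde{\x}_{\tilde{x},k}=\x_{x,j+k}$. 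Hence the corresponding extinction time satisfies $e_{\tilde{x}}(1)=e_{x}(n+1)-j$ on that event, and a routine change of variables $s=\tau-\alpha_{j}$ (using $\alpha_{j+k}-\alpha_{j}=\tilde{\alpha}_{k}$) gives the pathwise identity
\begin{equation*}
\int\limits_{\alpha_{j}}^{\alpha_{e_{x}(n+1)}}\Xi(\X_{x}(\tau))d\tau=\int\limits_{0}^{\tilde{\alpha}_{e_{\tilde{x}}(1)}}\Xi(\tilde{\X}_{\tilde{x}}(s))ds
\end{equation*}
on $\{e_{x}(n)=j\}$. By Lemma \ref{lemma_Xbasicprop} applied in the shifted setting, the right-hand side is a $\B(W)$-measurable function, call it $G$, of $(\tilde{x},(\tilde{\beta}_{k})_{k\geq 1},(\tilde{\eta}_{k})_{k\geq 1})$ only.

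Next I would exploit the probabilistic structure. Since the $(\eta_{m})$ and $(\beta_{m})$ are i.i.d. sequences, independent of each other and of $x$, the tuple $(\eta_{j},(\beta_{j+k})_{k\geq 1},(\eta_{j+k})_{k\geq 1})$ is independent of $\sigma(x,\beta_{1},\dots,\beta_{j},\eta_{1},\dots,\eta_{j-1})$, and by the remark just after Definition \ref{definition_basic} this independence passes to the completion $\F^{x}_{j}$. Moreover, this tuple has the same joint distribution as $(\overline{x},(\beta_{k})_{k\geq 1},(\eta_{k})_{k\geq 1})$ by construction of $\overline{x}$ (Remark \ref{remark_xbar}), so setting $H:=\mathbb{E}f\bigl(\int_{0}^{\alpha_{e_{\overline{x}}(1)}}\Xi(\X_{\overline{x}}(\tau))d\tau\bigr)=\mathbb{E}f(G(\overline{x},(\beta_{k}),(\eta_{k})))$ we also have $H=\mathbb{E}f(G(\eta_{j},(\beta_{j+k})_{k\geq 1},(\eta_{j+k})_{k\geq 1}))$ for every $j$.

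Finally I would verify the conditional expectation. Take any $A\in\F^{x}_{e_{x}(n)}$; by definition $A\cap\{e_{x}(n)=j\}\in\F^{x}_{j}$. Since $\{e_{x}(n)=j\}_{j\in\mathbb{N}}$ partitions $\Omega$ up to a null-set (Proposition \ref{prop_extinction}), the pathwise identity and the independence/distributional facts above yield
\begin{align*}
\mathbb{E}\Bigl(f\bigl(\textstyle\int_{\alpha_{e_{x}(n)}}^{\alpha_{e_{x}(n+1)}}\Xi(\X_{x}(\tau))d\tau\bigr)\id_{A}\Bigr)
&=\sum_{j}\mathbb{E}\bigl(f(G(\eta_{j},(\beta_{j+k}),(\eta_{j+k})))\id_{A\cap\{e_{x}(n)=j\}}\bigr)\\
&=\sum_{j}H\cdot\P(A\cap\{e_{x}(n)=j\})=H\cdot\P(A),
\end{align*}
which is the desired identity by definition of conditional expectation. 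The main obstacle is the bookkeeping in the first step---carefully identifying the shifted sequence with the original one on the event $\{e_{x}(n)=j\}$ and ensuring the resulting integrand is a measurable function only of the shifted variables so that the freezing step in the last display is rigorous; once this is done, the independence and distributional identities are immediate consequences of the i.i.d. hypothesis.
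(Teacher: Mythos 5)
Your argument is correct and rests on the same renewal idea as the paper's proof --- on $\{e_{x}(n)=j\}$ the chain restarts from $\eta_{j}$, and the restart data $(\eta_{j},(\beta_{j+k})_{k\geq 1},(\eta_{j+k})_{k\geq 1})$ is independent of $\F^{x}_{j}$ with the same law as $(\overline{x},(\beta_{k})_{k\geq 1},(\eta_{k})_{k\geq 1})$ --- but your implementation is genuinely different. The paper never works with the full shifted sequence: it first proves, by induction on $j$, a freezing identity for finite-dimensional cylinder functionals $\hat{f}_{j}(\x_{x,i},\dots,\x_{x,i+j-1},\beta_{i+1},\dots,\beta_{i+j})$, then additionally partitions on $\{e_{x}(n+1)=i+j\}$ and encodes both the indicator of that event and the integral as explicit bounded measurable functions $\hat{g}_{j},\hat{h}_{j}$ of those finitely many coordinates, before resumming a double series. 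You collapse all of this into a single application of independence by viewing $\int_{0}^{\tilde{\alpha}_{e_{\tilde{x}}(1)}}\Xi(\tilde{\X}_{\tilde{x}}(s))\,ds$ as one Borel functional $G$ of the infinite shifted tuple; this avoids both the induction and the second partition and is arguably cleaner. The price is exactly the point you flag: you must actually exhibit $G$ as a $\B(V)\otimes\B([0,\infty))^{\otimes\mathbb{N}}\otimes\B(V)^{\otimes\mathbb{N}}$-$\B(W)$-measurable map on the product space (with a convention on the null event where the shifted extinction time is infinite), since Lemma \ref{lemma_Xbasicprop} only gives measurability of the integral as a random variable on $\Omega$, not a functional representation. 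That construction is routine --- compose the measurable recursion $(\tilde{x},b,e)\mapsto T_{\A}(b)\tilde{x}+e$ from Remark \ref{remark_measuc0sg}, the measurable first-hitting functional, and the joint measurability of $(t,v)\mapsto\int_{0}^{t}\Xi(T_{\A}(\tau)v)\id_{V_{2}}(v)\,d\tau$ that the paper itself establishes inside its proof --- but it does need to be written out, since it is precisely the work that the paper's cylinder-function induction performs in its place. With that supplied, your independence and distributional identities, and the final summation over the partition $\{e_{x}(n)=j\}$, are all sound.
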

\begin{proof} Let $A \in \F^{x}_{e_{x}(n)}$ be given and introduce $A_{i}:=\{\omega \in A:~e_{x}(n)(\omega)=i\}$ for all $i \in \mathbb{N}$, with $i\geq n$. At first, it will be shown that
\begin{align}
\label{lemma_iidpreparationproofeq1}
\mathbb{E}\id_{A_{i}}\hat{f}_{j}(\x_{x,i},...,\x_{x,i+j-1},\beta_{i+1},...,\beta_{i+j}) =\P(A_{i} ) \mathbb{E}\hat{f}_{j}(\x_{\overline{x},0},...,\x_{\overline{x},j-1},\beta_{1},...,\beta_{j}),
\end{align}
for all $i \in \mathbb{N}$, with $i\geq n$, all $j \in \mathbb{N}$ and $\hat{f}_{j}:V^{j}\times [0,\infty)^{j}\rightarrow \mathbb{R}$ which are bounded and $\B(V^{j})\otimes \B([0,\infty)^{j})$-$\B(\R)$-measurable.\\ 
Now let us prove (\ref{lemma_iidpreparationproofeq1}) inductively w.r.t. $j \in \mathbb{N}$.\\
Let $j=1$, $i\geq n$ and $\hat{f}_{1}:V\times[0,\infty)\rightarrow \mathbb{R}$ be bounded and measurable. Note that $T(\beta_{i})\x_{x,i-1}=0$ on $A_{i}$. Consequently, we get $\mathbb{E}\id_{A_{i}} \hat{f}_{1}(\x_{x,i},\beta_{i+1}) = \mathbb{E}\id_{A_{i}} \hat{f}_{1}(\eta_{i},\beta_{i+1})$.
Moreover, appealing to Remark \ref{remark_stoppedfiltration} yields that $A_{i}\in \F_{i}^{x}=\sigma_{0}(x,\beta_{1},...,\beta_{i},\eta_{1},...,\eta_{i-1})$. Hence, $A_{i}$ is independent of $\hat{f}_{1}(\eta_{i},\beta_{i+1})$, which gives 
\begin{align*}
\mathbb{E}\id_{A_{i}} \hat{f}_{1}(\x_{x,i},\beta_{i+1}) =\P(A_{i})\mathbb{E}\hat{f}_{1}(\eta_{i},\beta_{i+1})=\P(A_{i})\mathbb{E}\hat{f}_{1}(\x_{\overline{x},0},\beta_{1}), 
\end{align*}
where the last inequality follows from the fact that $(\x_{\overline{x},0},\beta_{1})=(\overline{x},\beta_{1})$, which is in distribution equal to $(\eta_{i},\beta_{i+1})$. Hence, (\ref{lemma_iidpreparationproofeq1}) holds for $j=1$.\\
Now assume that it holds for an $j \in \mathbb{N}$, let $i\geq n$ and $\hat{f}_{j+1}:V^{j+1}\times[0,\infty)^{j+1}\rightarrow \mathbb{R}$ be bounded and $\B(V^{j+1})\otimes \B([0,\infty)^{j+1})$-$\B(\R)$-measurable. Moreover, for any $\tilde{\beta}\in [0,\infty),~\tilde{\eta}\in V$, introduce \linebreak$\hat{f}_{\tilde{\beta},\tilde{\eta}}:V^{j}\times [0,\infty)^{j}\rightarrow \mathbb{R}$, by
\begin{align*}
\hat{f}_{\tilde{\beta},\tilde{\eta}}(y_{0},...,y_{j-1},b_{1},...,b_{j}):= \hat{f}_{j+1}(y_{0},...,y_{j-1},T_{\A}(b_{j})y_{j-1}+\tilde{\eta},b_{1},...,b_{j},\tilde{\beta}),
\end{align*}
for all $y_{0},...,y_{j-1},\tilde{\eta}\in V$ and $b_{1},...,b_{j},\tilde{\beta}\in [0,\infty)$. Then $\hat{f}_{\tilde{\beta},\tilde{\eta}}$ inherits the boundedness of $\hat{f}_{j+1}$. Moreover, invoking Remark \ref{remark_measuc0sg}, gives that $\hat{f}_{\tilde{\beta},\tilde{\eta}}$ is $\B(V^{j})\otimes \B([0,\infty)^{j})$-$\B(\mathbb{R})$-measurable, as it is the composition of measurable functions, for all $\tilde{\beta}\in [0,\infty)$ and $\tilde{\eta}\in V$. Consequently, the induction hypothesis yields
\begin{align*}
\mathbb{E}\id_{A_{i}}\hat{f}_{\tilde{\beta},\tilde{\eta}}(\x_{x,i},...,\x_{x,i+j-1},\beta_{i+1},...,\beta_{i+j})d\P =\P(A_{i} ) \mathbb{E}\hat{f}_{\tilde{\beta},\tilde{\eta}}(\x_{\overline{x},0},...,\x_{\overline{x},j-1},\beta_{1},...,\beta_{j}),
\end{align*}
which gives
\begin{eqnarray*}
	& & ~
\mathbb{E}\id_{A_{i}}\hat{f}_{j+1}(\x_{x,i},...,\x_{x,i+j-1},T_{\A}(\beta_{i+j})\x_{x,i+j-1}+\tilde{\eta},\beta_{i+1},...,\beta_{i+j},\tilde{\beta})\\
& = & ~\P(A_{i} ) \mathbb{E}\hat{f}_{j+1}(\x_{\overline{x},0},...,\x_{\overline{x},j-1},T_{\A}(\beta_{j})\x_{\overline{x},j-1}+\tilde{\eta},\beta_{1},...,\beta_{j},\tilde{\beta}),
\end{eqnarray*}
for all $i \geq n$, $\tilde{\beta}\in [0,\infty)$ and $\tilde{\eta}\in V$.\\
Moreover, Lemma \ref{lemma_xbasicprop} yields, that $(\x_{x,i},...,\x_{x,i+j-1},\beta_{i+1},...,\beta_{i+j})$ is $\F^{x}_{i+j}$-$\B(V^{j})\otimes \B([0,\infty)^{j})$-measurable and, a fortiori, that $\id_{A_{i}}$ is $\F^{x}_{i+j}$-$\B(\mathbb{R})$-measurable. Consequently, as $(\beta_{i+j+1}, \eta_{i+j})$ is independent of $\F^{x}_{i+j}$ and as $(\beta_{i+j+1},\eta_{i+j})=(\beta_{j+1},\eta_{j})$ in distribution, we get 
\begin{eqnarray*}
& & ~
\mathbb{E}\id_{A_{i}}\hat{f}_{j+1}(\x_{x,i},...,\x_{x,i+j},\beta_{i+1},...,\beta_{i+j+1})\\
& = & ~ \mathbb{E}(\id_{A_{i}}\hat{f}_{j+1}(\x_{x,i},...,\x_{x,i+j-1},T_{\A}(\beta_{i+j})\x_{x,i+j-1}+\eta_{i+j},\beta_{i+1},...,\beta_{i+j},\beta_{i+j+1}))\\
& = & ~ \int \limits_{[0,\infty)\times V} \mathbb{E}(\id_{A_{i}}\hat{f}_{j+1}(\x_{x,i},...,\x_{x,i+j-1},T_{\A}(\beta_{i+j})\x_{x,i+j-1}+\tilde{\eta},\beta_{i+1},...,\beta_{i+j},\tilde{\beta}))d\P_{(\beta_{i+j+1},\eta_{i+j})}(\tilde{\beta},\tilde{\eta})\\
& = & ~ \int \limits_{[0,\infty)\times V} \P(A_{i} ) \mathbb{E}\hat{f}_{j+1}(\x_{\overline{x},0},...,\x_{\overline{x},j-1},T_{\A}(\beta_{j})\x_{\overline{x},j-1}+\tilde{\eta},\beta_{1},...,\beta_{j},\tilde{\beta})d\P_{(\beta_{i+j+1},\eta_{i+j})}(\tilde{\beta},\tilde{\eta})\\
& = & ~ \int \limits_{[0,\infty)\times V} \P(A_{i} ) \mathbb{E}\hat{f}_{j+1}(\x_{\overline{x},0},...,\x_{\overline{x},j-1},T_{\A}(\beta_{j})\x_{\overline{x},j-1}+\tilde{\eta},\beta_{1},...,\beta_{j},\tilde{\beta})d\P_{(\beta_{j+1},\eta_{j})}(\tilde{\beta},\tilde{\eta}).
\end{eqnarray*}
Now, appealing to Lemma \ref{lemma_xbasicprop} yields, that $(\x_{\overline{x},0},...,\x_{\overline{x},j-1},\beta_{1},...,\beta_{j})$ is $\F^{\overline{x}}_{j}$-$\B(V^{j})\otimes [0,\infty)^{j}$-measurable. (Note that this is indeed possible, since $\overline{x}$ is also an independent initial leading to extinction, see Remark \ref{remark_xbar}.) Moreover, it is plain that $(\beta_{j+1},\eta_{j})$ is independent of $\F^{\overline{x}}_{j}$. Consequently, we get
\begin{align*}
\mathbb{E}\id_{A_{i}}\hat{f}_{j+1}(\x_{x,i},...,\x_{x,i+j},\beta_{i+1},...,\beta_{i+j+1})=\P(A_{i}) \mathbb{E}\hat{f}_{j+1}(\x_{\overline{x},0},...,\x_{\overline{x},j-1},\x_{\overline{x},j},\beta_{1},...,\beta_{j},\beta_{j+1}),
\end{align*}
which gives (\ref{lemma_iidpreparationproofeq1}).\\
Now the actual claim is proven by the aid of (\ref{lemma_iidpreparationproofeq1}). Firstly, appealing to Lemma \ref{lemma_xbasicprop}.ii) and Proposition \ref{prop_extinction} yields
\begin{align*}
\mathbb{E}\left(\id_{A}f\left(\int \limits_{\alpha_{e_{x}(n)}}\limits^{\alpha_{e_{x}(n+1)}}\Xi(\X_{x}(\tau))d\tau\right) \right) = \sum \limits_{i=n}\limits^{\infty}\sum \limits_{j=1}\limits^{\infty}\mathbb{E}\left(\id_{A_{i}}\id_{\{e_{x}(n+1)=i+j\}}f\left(\int \limits_{\alpha_{i}}\limits^{\alpha_{i+j}}\Xi(\X_{x}(\tau))d\tau\right) \right).
\end{align*}
In addition, we have
\begin{align*}
\int \limits_{\alpha_{i}}\limits^{\alpha_{i+j}}\Xi(\X_{x}(\tau))d\tau =  \sum \limits_{k=i}\limits^{i+j-1}\int \limits_{0}\limits^{\beta_{k+1}}\Xi(T_{\A}(\tau)\x_{x,k})d\tau= \sum \limits_{k=0}\limits^{j-1}\int \limits_{0}\limits^{\beta_{k+i+1}}\Xi(T_{\A}(\tau)\x_{x,k+i})d\tau
\end{align*}
Combining the former and the latter  equality implies
\begin{align*}
\mathbb{E}\left(\id_{A}f\left(\int \limits_{\alpha_{e_{x}(n)}}\limits^{\alpha_{e_{x}(n+1)}}\Xi(\X_{x}(\tau))d\tau\right) \right) = \sum \limits_{i=n}\limits^{\infty}\sum \limits_{j=1}\limits^{\infty}\mathbb{E}\left(\id_{A_{i}}\id_{\{e_{x}(n+1)=i+j\}}f\left( \sum \limits_{k=0}\limits^{j-1}\int \limits_{0}\limits^{\beta_{k+i+1}}\Xi(T_{\A}(\tau)\x_{x,k+i})d\tau\right) \right).
\end{align*}
For all $j \in \mathbb{N}$, introduce $\hat{h}_{j}:V^{j}\times [0,\infty)^{j}\times \mathbb{R}$, by
\begin{align*} 
\hat{h}_{j}(y_{0},...,y_{j-1},b_{1},...,b_{j}):=f\left( \sum \limits_{k=0}\limits^{j-1}\int \limits_{0}\limits^{b_{k+1}}\Xi(T_{\A}(\tau)y_{k})\id_{V_{2}}(y_{k})d\tau\right).
\end{align*}
Invoking Remark \ref{remark_measuc0sg} and Remark \ref{lemma_meas1}, gives that $[0,\infty)\times V \ni (\tau,y)\mapsto \Xi(T_{\A}(\tau)y)\id_{V_{2}}(y)$ is $\B([0,\infty))\otimes V$-$\B(W)$-measurable. Moreover, working as in the proof of Lemma \ref{lemma_Xbasicprop} yields that $\Xi(T_{\A}(\cdot)y)\id_{V_{2}}(y)\in L^{1}([0,t];W)$ for all $t>0$ and $y\in V$. Consequently, \cite[Proposition 2.1.3]{SIBS} yields that $(y,t)\mapsto \int \limits_{0}\limits^{t}\Xi(T_{\A}(\tau)y)\id_{V_{2}}(y)d\tau$ is, for each $y$, as mapping in $t$ continuous, and by \cite[Proposition 2.1.4]{SIBS} it is for each $t \in [0,\infty)$, as a mapping in $y$, $\B(V)$-$\B(W)$-measurable. Consequently, this mapping is $\B(V)\otimes \B([0,\infty))$-$\B(W)$-measurable, see \cite[Lemma 4.51]{IDA}.\\ 
Using these observations, it is plain to deduce that $\hat{h}_{j}$ is $\B(V^{j})$-$\B([0,\infty)^{j})$-$\B(\mathbb{R})$-measurable for all $j \in \mathbb{N}$. Moreover, each $\hat{h}_{j}$ is obviously bounded.\\
For all $j \in \mathbb{N}$, introduce $\hat{g}_{j}:V^{j}\times [0,\infty)^{j}\times \mathbb{R}$, by
\begin{align*}
\hat{g}_{j}(y_{0},...,y_{j-1},b_{1},...,b_{j}):=\id\{T_{\A}(b_{k})y_{k-1}\neq 0, \forall k=1,...,j-1,~ T_{\A}(b_{j})y_{j-1}=0\}, ~\forall j \in \mathbb{N}\setminus\{1\}
\end{align*}
and $\hat{g}_{1}(y_{0},b_{1}):=\id\{ T_{\A}(b_{1})y_{0}=0\}$. Then $\hat{g}_{j}$ is obviously bounded, and by the aid of Remark \ref{remark_measuc0sg} also $\B(V^{j})\otimes \B([0,\infty)^{j})$-$\B(\R)$-measurable.\\
Moreover, appealing to Lemma \ref{lemma_ineqx}.i) yields
\begin{align*}
\hat{h}_{j}(\x_{x,i},...,\x_{x,i+j-1},\beta_{i+1},...,\beta_{i+j})=f\left( \sum \limits_{k=0}\limits^{j-1}\int \limits_{0}\limits^{\beta_{i+k+1}}\Xi(T_{\A}(\tau)\x_{x,i+k})d\tau\right),~\forall i \geq n,~j \in \mathbb{N}
\end{align*}
almost surely. In addition, for all $\omega \in A_{i}$, we have
\begin{align*}
\hat{g}_{j}(\x_{x,i},...,\x_{x,i+j-1},\beta_{i+1},...,\beta_{i+j})(\omega)=\id_{\{e_{x}(n+1)=i+j\}}(\omega),~\forall i \geq n,~j\in \mathbb{N}.
\end{align*}
Consequently, putting it all together yields
\begin{eqnarray*}
	& & ~
	\mathbb{E}\left(\id_{A}f\left(\int \limits_{\alpha_{e_{x}(n)}}\limits^{\alpha_{e_{x}(n+1)}}\Xi(\X_{x}(\tau))d\tau\right) \right)\\
	& = & ~\sum \limits_{i=n}\limits^{\infty}\sum \limits_{j=1}\limits^{\infty}\mathbb{E}\left(\id_{A_{i}}\hat{g}_{j}(\x_{x,i},...,\x_{x,i+j-1},\beta_{i+1},...,\beta_{i+j})\hat{h}_{j}(\x_{x,i},...,\x_{x,i+j-1},\beta_{i+1},...,\beta_{i+j}) \right)\\
	& = & ~\sum \limits_{i=n}\limits^{\infty}\sum \limits_{j=1}\limits^{\infty}P(A_{i})\mathbb{E}\left(\hat{g}_{j}(\x_{\overline{x},0},...,\x_{\overline{x},j-1},\beta_{1},...,\beta_{j})\hat{h}_{j}(\x_{\overline{x},0},...,\x_{\overline{x},j-1},\beta_{1},...,\beta_{j}) \right)\\
	& = & ~P(A)\sum \limits_{j=1}\limits^{\infty}\mathbb{E}\left(\hat{g}_{j}(\x_{\overline{x},0},...,\x_{\overline{x},j-1},\beta_{1},...,\beta_{j})\hat{h}_{j}(\x_{\overline{x},0},...,\x_{\overline{x},j-1},\beta_{1},...,\beta_{j}) \right).
\end{eqnarray*}
In addition, it is straightforward that
\begin{align*}
\hat{g}_{j}(\x_{\overline{x},0},...,\x_{\overline{x},j-1},\beta_{1},...,\beta_{j})(\omega)=\id_{\{e_{\overline{x}}(1)=j\}}(\omega).
\end{align*}
Using this, while having in mind Lemma \ref{lemma_ineqx}.i), gives
\begin{eqnarray*}
	\mathbb{E}\left(\id_{A}f\left(\int \limits_{\alpha_{e_{x}(n)}}\limits^{\alpha_{e_{x}(n+1)}}\Xi(\X_{x}(\tau))d\tau\right) \right)
	& = & ~P(A) \sum \limits_{j=1}\limits^{\infty}\mathbb{E}\left(\id_{\{e_{\overline{x}}(1)=j\}}f\left( \sum \limits_{k=0}\limits^{j-1}\int \limits_{0}\limits^{\beta_{k+1}}\Xi(T_{\A}(\tau)\x_{\overline{x},k})d\tau\right) \right)\\
	& = & ~ P(A)\sum \limits_{j=1}\limits^{\infty}\mathbb{E}\left(\id_{\{e_{\overline{x}}(1)=j\}}f\left(\int \limits_{0}\limits^{\alpha_{e_{\overline{x}}(1)}}\Xi(\X_{\overline{x}}(\tau))d\tau\right) \right)\\
\end{eqnarray*}
Finally, as $e_{\overline{x}}(1)\in \mathbb{N}$ a.s. and as $A \in \F^{x}_{e_{x}(n)}$ was arbitrary, we obtain
\begin{align*}
\mathbb{E}\left(\id_{A}f\left(\int \limits_{\alpha_{e_{x}(n)}}\limits^{\alpha_{e_{x}(n+1)}}\Xi(\X_{x}(\tau))d\tau\right) \right) =P(A) \mathbb{E}\left(f\left(\int \limits_{0}\limits^{\alpha_{e_{\overline{x}}(1)}}\Xi(\X_{\overline{x}}(\tau))d\tau\right) \right),
\end{align*}
for all $A \in \F^{x}_{e_{x}(n)}$, which implies the claim, by the very definition of the conditional expectation.
\end{proof}

\begin{lemma}\label{lemma_measint} Let $(\Xi,(W,||\cdot||_{W}))\in SL_{V_{2}}(V)$, $n \in \mathbb{N}\setminus \{1\}$ and $x:\Omega \rightarrow V$ an independent initial leading to extinction. Then the mapping defined by
\begin{align*}
\Omega \ni \omega \mapsto \int \limits_{\alpha_{e_{x}(n-1)(\omega)}}\limits^{\alpha_{e_{x}(n)(\omega)}}\Xi(\X_{x}(\tau,\omega))d\tau,
\end{align*}
is $\F^{x}_{e_{x}(n)}$-$\B(W)$-measurable.
\end{lemma}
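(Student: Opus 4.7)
My plan is to reduce the claim to a measurability statement on each level set $\{e_x(n)=j\}$, exploiting the very definition of the stopped $\sigma$-algebra $\mathcal{F}^x_{e_x(n)}$. Recall that a map $Y:\Omega\to W$ is $\mathcal{F}^x_{e_x(n)}$-$\mathcal{B}(W)$-measurable if and only if $\id_{\{e_x(n)=j\}} Y$ is $\mathcal{F}^x_j$-$\mathcal{B}(W)$-measurable for every $j\in\mathbb{N}$ (this is standard for stopping times with values in $\mathbb{N}$; here $e_x(n)$ is such a stopping time by Remark \ref{remark_stoppedfiltration}, up to the $\mathbb{P}$-null set $\{e_x(n)=\infty\}$, which is harmless by completeness). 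So I will fix $j\in\mathbb{N}$, $j\ge n$, and aim to show that $\id_{\{e_x(n)=j\}}\int_{\alpha_{e_x(n-1)}}^{\alpha_{e_x(n)}}\Xi(\mathbb{X}_x(\tau))d\tau$ is $\mathcal{F}^x_j$-$\mathcal{B}(W)$-measurable.

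The first step is the algebraic decomposition: on $\{e_x(n)=j\}\cap\{e_x(n-1)=k\}$ (where $n-1\le k\le j-1$, by Lemma \ref{lemma_xbasicprop}.ii)) we have
\begin{align*}
\int_{\alpha_{e_x(n-1)}}^{\alpha_{e_x(n)}}\Xi(\mathbb{X}_x(\tau))d\tau = \sum_{i=k}^{j-1}\int_0^{\beta_{i+1}}\Xi\bigl(T_{\mathcal{A}}(\tau)\mathbb{x}_{x,i}\bigr)d\tau,
\end{align*}
exactly as in the rewrite used in the proof of Lemma \ref{lemma_iidpreparation}. Partitioning $\{e_x(n)=j\}$ according to the value of $e_x(n-1)\in\{n-1,\dots,j-1\}$ therefore expresses the quantity of interest as a finite sum of indicators times block integrals of the above form.

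The second step is to verify measurability of each piece w.r.t.\ $\mathcal{F}^x_j$. By Lemma \ref{lemma_xbasicprop}.iii), the events $\{e_x(n-1)=k\}$ and $\{e_x(n)=j\}$ both lie in $\mathcal{F}^x_j$ (since $k\le j-1<j$). For the integral summand with $i\le j-1$, I will use that $\mathbb{x}_{x,i}$ is $\tilde{\mathcal{F}}^x_i$-$\mathcal{B}(V)$-measurable by Lemma \ref{lemma_xbasicprop}.i), and that $\tilde{\mathcal{F}}^x_{j-1}\subseteq \mathcal{F}^x_j$ (because $\mathcal{F}^x_j$ contains $\beta_1,\dots,\beta_j,\eta_1,\dots,\eta_{j-1}$, which dominates the generators of $\tilde{\mathcal{F}}^x_{j-1}$). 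The increments $\beta_{i+1}$ for $i\le j-1$ are evidently $\mathcal{F}^x_j$-$\mathcal{B}([0,\infty))$-measurable. Combining these with the fact, already established inside the proof of Lemma \ref{lemma_Xbasicprop} (invoking \cite[Prop. 2.1.3, 2.1.4, Lemma 4.51]{SIBS}), that the map $(y,b)\mapsto \int_0^b \Xi(T_{\mathcal{A}}(\tau)y)\id_{V_2}(y)\,d\tau$ is $\mathcal{B}(V)\otimes\mathcal{B}([0,\infty))$-$\mathcal{B}(W)$-measurable, I obtain that each $\int_0^{\beta_{i+1}}\Xi(T_{\mathcal{A}}(\tau)\mathbb{x}_{x,i})d\tau$ agrees almost surely with an $\mathcal{F}^x_j$-measurable map (the $V_2$ indicator being irrelevant since $\mathbb{x}_{x,i}\in V_2$ a.s.\ by Lemma \ref{lemma_ineqx}.i)). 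Completeness of $\mathcal{F}^x_j$ (inherited from completeness of $\mathcal{F}$) absorbs the null set.

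The whole argument is therefore bookkeeping built on top of Lemmata \ref{lemma_xbasicprop}, \ref{lemma_ineqx}, and \ref{lemma_Xbasicprop}; no genuinely new ingredient is needed. The only slightly delicate point, which is the main obstacle worth double-checking, is the inclusion $\tilde{\mathcal{F}}^x_{j-1}\subseteq\mathcal{F}^x_j$ that allows me to push the measurability of $\mathbb{x}_{x,i}$ for $i\le j-1$ into $\mathcal{F}^x_j$; once this is noted the proof is immediate.
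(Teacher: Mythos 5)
Your argument is correct and is essentially the paper's proof: the same block decomposition $\int_{\alpha_k}^{\alpha_j}\Xi(\X_{x}(\tau))d\tau=\sum_{i}\int_0^{\beta_{i+1}}\Xi(T_{\A}(\tau)\x_{x,i})d\tau$, the same joint measurability of $(y,b)\mapsto\int_0^b\Xi(T_{\A}(\tau)y)\id_{V_2}(y)d\tau$ (which, minor slip, is established in the proof of Lemma \ref{lemma_iidpreparation}, not Lemma \ref{lemma_Xbasicprop}), and the same adaptedness facts from Lemma \ref{lemma_xbasicprop}. The only cosmetic difference is that the paper first reduces to the cumulative integral $\int_0^{\alpha_{e_x(n)}}$ and uses the filtration property of $(\F^x_{e_x(m)})_m$, whereas you partition additionally over the value of $e_x(n-1)$; both are fine.
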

\begin{proof} As $(\F^{x}_{e_{x}(m)})_{m \in \mathbb{N}}$ is a filtration, it suffices to prove that  $\int \limits_{0}\limits^{\alpha_{e_{x}(n)}}\Xi(\X_{x}(\tau))d\tau$ is $\F^{x}_{e_{x}(n)}$-$\B(W)$-measurable, for all $n \in \mathbb{N}$. To this end, introduce $j \in \mathbb{N}$ as well as $B \in \B(W)$ and observe that
\begin{align}
\label{lemma_measintegralproofeq1}
\left\{\int\limits_{0}\limits^{\alpha_{e_{x}(n)}}\Xi(\X_{x}(\tau))d\tau \in B\right\} \cap\{e_{x}(n)=j\}= \left\{\sum \limits_{k=0}\limits^{j-1} \int\limits_{0}\limits^{\beta_{k+1}}\Xi(T_{\A}(\tau)\x_{x,k})d\tau \in B\right\}\cap\{e_{x}(n)=j\}.
\end{align}
As demonstrated in the proof of Lemma \ref{lemma_iidpreparation}, $(t,v)\mapsto \int \limits_{0}\limits^{t}\Xi(T_{\A}(\tau)v)\id_{V_{2}}(v)d\tau$ is $\B([0,\infty))\otimes \B(V)$-$\B(W)$-measurable. Consequently, since $\x_{x,k}$ and $\beta_{k+1}$ are $\F^{x}_{k+1}$-$\B(V)$-measurable and $\F^{x}_{k+1}$-$\B([0,\infty))$-measurable, resp., for all $k=0,...,j-1$, we get that
\begin{align*}
\sum \limits_{k=0}\limits^{j-1} \int\limits_{0}\limits^{\beta_{k+1}}\Xi(T_{\A}(\tau)\x_{x,k})d\tau = \sum \limits_{k=0}\limits^{j-1} \int\limits_{0}\limits^{\beta_{k+1}}\Xi(T_{\A}(\tau)\x_{x,k})\id_{V_{2}}(\x_{x,k})d\tau
\end{align*}
is  $\F^{x}_{j}$-$\B(W)$-measurable, where the equality holds almost surely. This gives, while having in mind (\ref{lemma_measintegralproofeq1}) as well as Lemma \ref{lemma_xbasicprop}.iii) that
\begin{align*}
\left\{\int\limits_{0}\limits^{\alpha_{e_{x}(n)}}\Xi(\X_{x}(\tau))d\tau \in B\right\} \cap\{e_{x}(n)=j\}\in \F^{x}_{j}
\end{align*}
and the claim follows.
\end{proof}

\begin{proposition}\label{prop_iid} Let $(\Xi,(W,||\cdot||_{W}))\in SL_{V_{2}}(V)$ and let $x:\Omega \rightarrow V$ be an independent initial leading to extinction. Then the sequence $\left(\int \limits_{\alpha_{e_{x}(n)}}\limits^{\alpha_{e_{x}(n+1)}}\Xi(\X_{x}(\tau))d\tau\right)_{n\in \mathbb{N}}$ is i.i.d., with
\begin{align}
\label{prop_iideq}
\int \limits_{\alpha_{e_{x}(n)}}\limits^{\alpha_{e_{x}(n+1)}}\Xi(\X_{x}(\tau))d\tau = \int \limits_{0}\limits^{\alpha_{e_{\overline{x}}(1)}}\Xi(\X_{\overline{x}}(\tau))d\tau
\end{align}
in distribution, for all $n \in \mathbb{N}$.
\end{proposition}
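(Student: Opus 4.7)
The plan is to derive both parts of the statement directly from Lemma \ref{lemma_iidpreparation}, using Lemma \ref{lemma_measint} only for measurability bookkeeping. Throughout, abbreviate $Y_n := \int_{\alpha_{e_x(n)}}^{\alpha_{e_x(n+1)}}\Xi(\X_x(\tau))d\tau$ and $Y := \int_0^{\alpha_{e_{\overline{x}}(1)}}\Xi(\X_{\overline{x}}(\tau))d\tau$.

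First, I would establish the distributional identity (\ref{prop_iideq}) by an application of the tower property to Lemma \ref{lemma_iidpreparation}: for any bounded, Borel measurable $f\colon W \to \mathbb{R}$,
\begin{align*}
\mathbb{E} f(Y_n) \;=\; \mathbb{E}\,\mathbb{E}\!\left(f(Y_n) \,\big\vert\, \F^x_{e_x(n)}\right) \;=\; \mathbb{E} f(Y),
\end{align*}
so $Y_n$ and $Y$ have the same law. In particular all $Y_n$ share a common distribution, settling the ``identically distributed'' half.

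For independence, the decisive reformulation is that Lemma \ref{lemma_iidpreparation} says the conditional distribution of $Y_n$ given $\F^x_{e_x(n)}$ is (almost surely) the deterministic law of $Y$: since $\mathbb{E}(f(Y_n) \mid \F^x_{e_x(n)}) = \mathbb{E} f(Y)$ for every bounded Borel $f$, this is precisely the standard criterion for $Y_n$ to be independent of the $\sigma$-algebra $\F^x_{e_x(n)}$. To upgrade this to joint independence of the sequence, I would appeal to Lemma \ref{lemma_measint} to see that each $Y_k$ is $\F^x_{e_x(k+1)}$-measurable. Since $(\F^x_{e_x(m)})_{m\in\mathbb{N}}$ is an increasing filtration (Remark \ref{remark_stoppedfiltration}), the earlier variables $Y_1,\dots,Y_{n-1}$ all lie in $\F^x_{e_x(n)}$, and hence $\sigma(Y_1,\dots,Y_{n-1})\subseteq \F^x_{e_x(n)}$. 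Therefore $Y_n$ is independent of $(Y_1,\dots,Y_{n-1})$ for every $n$, which is equivalent to joint independence of the full sequence $(Y_n)_{n\in \mathbb{N}}$.

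All the genuine technical content, namely computing the regular conditional distribution, has already been absorbed into Lemma \ref{lemma_iidpreparation}, so the argument is essentially a two-step bookkeeping exercise. The only mildly subtle point is recognising that the conditional expectation identity in Lemma \ref{lemma_iidpreparation} is logically equivalent to the independence statement $Y_n \perp \F^x_{e_x(n)}$; this is immediate once one notes that the right-hand side of that lemma is deterministic, so no further estimates are needed.
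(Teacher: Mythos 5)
Your proposal is correct and follows essentially the same route as the paper: the identity of laws comes from taking expectations in Lemma \ref{lemma_iidpreparation}, and joint independence comes from combining the (constant) conditional law of $Y_n$ given $\F^{x}_{e_{x}(n)}$ with the $\F^{x}_{e_{x}(k+1)}$-measurability of the earlier terms supplied by Lemma \ref{lemma_measint} and the filtration property of Remark \ref{remark_stoppedfiltration}. The paper merely writes out explicitly, as an induction over product indicators, the standard implication from sequential to joint independence that you invoke as a known equivalence.
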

\begin{proof} Let $B \in \B(W)$ be given, and set $f:=\id_{B}$, where $f:W\rightarrow \mathbb{R}$. Then $f$ is obviously bounded and $\B(W)$-$\B(\R)$-measurable. Consequently, appealing to Lemma \ref{lemma_iidpreparation} yields
\begin{align*}
\P\left(\int \limits_{\alpha_{e_{x}(n)}}\limits^{\alpha_{e_{x}(n+1)}}\Xi(\X_{x}(\tau))d\tau \in B\right)= \mathbb{E}f\left(\int \limits_{\alpha_{e_{x}(n)}}\limits^{\alpha_{e_{x}(n+1)}}\Xi(\X_{x}(\tau))d\tau\right) = \mathbb{E}f\left(\int \limits_{0}\limits^{\alpha_{e_{\overline{x}}(1)}}\Xi(\X_{\overline{x}}(\tau))d\tau\right),
\end{align*}
which implies (\ref{prop_iideq}).\\
Consequently, it remains to show that
\begin{align}
\label{prop_iidproofeq1}
\P\left(\int \limits_{\alpha_{e_{x}(1)}}\limits^{\alpha_{e_{x}(2)}}\Xi(\X_{x}(\tau))d\tau \in B_{1},...,\int \limits_{\alpha_{e_{x}(n)}}\limits^{\alpha_{e_{x}(n+1)}}\Xi(\X_{x}(\tau))d\tau \in B_{n}\right) = \prod \limits_{k=1}\limits^{n}\P\left(\int \limits_{\alpha_{e_{x}(k)}}\limits^{\alpha_{e_{x}(k+1)}}\Xi(\X_{x}(\tau))d\tau \in B_{k}\right)
\end{align}
for all $B_{1},...,B_{n}\in \B(W)$ and $n \in \mathbb{N}$.\\
(\ref{prop_iidproofeq1}) is trivial if $n=1$. So assume it holds for $n-1\in \mathbb{N}$ and let us prove it for $n$. To this end, introduce $B_{1},...,B_{n}\in \B(W)$ and $f_{k}:=\id_{B_{k}}$. Then employing Lemma \ref{lemma_iidpreparation}, Lemma \ref{lemma_measint}, (\ref{prop_iidproofeq1}) and (\ref{prop_iideq}) yields
\begin{eqnarray*}
& & ~
	\P\left(\int \limits_{\alpha_{e_{x}(1)}}\limits^{\alpha_{e_{x}(2)}}\Xi(\X_{x}(\tau))d\tau \in B_{1},...,\int \limits_{\alpha_{e_{x}(n)}}\limits^{\alpha_{e_{x}(n+1)}}\Xi(\X_{x}(\tau))d\tau \in B_{n}\right)\\
	& = & ~ \mathbb{E}\left(\prod \limits_{k=1}\limits^{n-1}f_{k}\left(\int \limits_{\alpha_{e_{x}(k)}}\limits^{\alpha_{e_{x}(k+1)}}\Xi(\X_{x}(\tau))d\tau\right)\mathbb{E}\left(f_{n}\left(\int \limits_{\alpha_{e_{x}(n)}}\limits^{\alpha_{e_{x}(n+1)}}\Xi(\X_{x}(\tau))d\tau \right)\Big\vert\F_{e_{x}(n)}^{x}\right)\right) \\
	& = & ~ \prod \limits_{k=1}\limits^{n}\P\left(\int \limits_{\alpha_{e_{x}(k)}}\limits^{\alpha_{e_{x}(k+1)}}\Xi(\X_{x}(\tau))d\tau \in B_{k}\right) 
\end{eqnarray*}
and the claim follows.
\end{proof}

\begin{lemma}\label{lemma_squareintegrability} Let $(\Xi,(W,||\cdot||_{W}))\in SL_{V_{2}}(V)$ and let $x:\Omega \rightarrow V$ be an independent initial leading to extinction. Then, the assertion
\begin{align*}
\int \limits_{\alpha_{e_{x}(n)}}\limits^{\alpha_{e_{x}(n+1)}}\Xi(\X_{x}(\tau))d\tau\in L^{2}(\Omega;W)
\end{align*}
is valid for all $n \in \mathbb{N}$.
\end{lemma}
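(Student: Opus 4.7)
By Proposition~\ref{prop_iid}, for every $n\in\mathbb{N}$ the random variable $\int_{\alpha_{e_x(n)}}^{\alpha_{e_x(n+1)}}\Xi(\X_x(\tau))\,d\tau$ has the same law as $I:=\int_0^{\alpha_{e_{\overline{x}}(1)}}\Xi(\X_{\overline{x}}(\tau))\,d\tau$. Since $L^2(\Omega;W)$-membership depends only on the law, it is enough to prove $I\in L^2(\Omega;W)$. Write $N:=e_{\overline{x}}(1)$, which is a.s.\ finite by Proposition~\ref{prop_extinction}. Following the integral decomposition used in the proof of Lemma~\ref{lemma_Xbasicprop}.iii) and combining the sublinearity of $\Xi$ with the $V_2$-contractivity of $T_{\A}$ (Assumption~\ref{assumption_fe}.iii) applied iteratively to yield $\|\x_{\overline{x},k-1}\|_{V_2}\leq Y$ for $Y:=\|\overline{x}\|_{V_2}+\sum_{j=1}^{N}\|\eta_j\|_{V_2}$, one obtains the deterministic pointwise estimate
\begin{align*}
\|I\|_W\leq\sum_{k=1}^{N}\beta_k\bigl(c_1\|\x_{\overline{x},k-1}\|_{V_2}+c_2\bigr)\leq (c_1 Y+c_2)\,\alpha_N.
\end{align*}
The claim therefore reduces to $\mathbb{E}\bigl[(c_1 Y+c_2)^2\alpha_N^2\bigr]<\infty$, which via Cauchy--Schwarz will follow once $\mathbb{E}Y^4$ and $\mathbb{E}\alpha_N^4$ are both shown to be finite.

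The central obstacle is obtaining polynomial moments of $N$ of sufficiently high order. I would refine the Chebyshev step used in Proposition~\ref{prop_extinction}: starting from the inclusion $\{N>n\}\subseteq\{-\kappa\alpha_n+\sum_{k=1}^{n-1}\|\eta_k\|_{V_1}^\rho+\|\overline{x}\|_{V_1}^\rho>0\}$, whose enclosed i.i.d.\ sum has mean eventually bounded by $-cn$ (Assumption~\ref{assumption_mb}.iii), one applies Markov's inequality at the $(11+\hat{\varepsilon})$-th power and estimates the centered partial sum via the Marcinkiewicz--Zygmund (equivalently Rosenthal) inequality. Assumption~\ref{assumption_mb}.ii) and $\overline{x}\stackrel{d}{=}\eta_1$ (Remark~\ref{remark_xbar}) provide exactly the required moments, namely $\mathbb{E}\beta_1^{11+\hat{\varepsilon}}<\infty$ and $\mathbb{E}\|\eta_1\|_{V_1}^{\rho(11+\hat{\varepsilon})}<\infty$, so that $\mathbb{E}|S_n-\mathbb{E}S_n|^{11+\hat{\varepsilon}}=O(n^{(11+\hat{\varepsilon})/2})$ and hence $\P(N>n)\leq C\,n^{-(11+\hat{\varepsilon})/2}$. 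Consequently $\mathbb{E}N^q<\infty$ for every $q<(11+\hat{\varepsilon})/2$, which comfortably covers any $q\leq 4$.

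To combine these ingredients I exploit the stopping-time structure. By Lemma~\ref{lemma_xbasicprop}.iii) and Remark~\ref{remark_stoppedfiltration}, $\{N\geq k\}$ is $\F^{\overline{x}}_{k-1}$-measurable, while $\beta_k$ and $\eta_k$ are each independent of $\F^{\overline{x}}_{k-1}$. Using the elementary domination $\bigl(\sum_{k=1}^N X_k\bigr)^p\leq N^{p-1}\sum_{k=1}^N X_k^p$ together with Hölder's inequality, the estimation of $\mathbb{E}\alpha_N^4$ and of $\mathbb{E}\bigl(\sum_{j=1}^N\|\eta_j\|_{V_2}\bigr)^4$ reduces to controlling series of the form $\sum_k\mathbb{E}\bigl[N^{p-1}\id_{\{N\geq k\}}\cdot X_k^p\bigr]$; a further Hölder application distributes the $N$-factor against the summand, and both pieces are then bounded by the $N$-moments established above together with $\mathbb{E}\beta_1^{11+\hat{\varepsilon}}<\infty$, $\mathbb{E}\|\eta_1\|_{V_2}^4<\infty$ and $\mathbb{E}\|\overline{x}\|_{V_2}^4<\infty$. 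A concluding Cauchy--Schwarz then delivers $\mathbb{E}\bigl[(c_1 Y+c_2)^2\alpha_N^2\bigr]<\infty$, completing the proof.
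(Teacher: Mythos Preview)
Your reduction to showing $I\in L^2(\Omega;W)$ via Proposition~\ref{prop_iid}, the pointwise bound $\|I\|_W\le (c_1Y+c_2)\alpha_N$, and the tail estimate $\P(N>n)=O(n^{-(11+\hat\varepsilon)/2})$ via Rosenthal/Markov are all correct and parallel the paper's strategy. The paper differs mainly in bookkeeping: it decomposes on $\{N=m\}$, applies Cauchy--Schwarz to get $\sum_m Cm^4\,\P(N=m)^{1/2}$ (the $m^4$ coming from a Minkowski estimate in $L^4$ that needs only fourth moments of $\beta_1$ and $\|\eta_1\|_{V_2}$), and then controls $\P(N=m)^{1/2}$ through Katz's theorem rather than Rosenthal. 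Both routes exploit exactly the moment hypotheses in Assumption~\ref{assumption_mb}.ii).

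There is, however, one step in your outline that does not go through as written. For the term $\mathbb{E}\bigl(\sum_{j=1}^N\|\eta_j\|_{V_2}\bigr)^4$ you propose the bound $N^{3}\sum_{j=1}^N\|\eta_j\|_{V_2}^4$ followed by a H\"older split separating $N^3$ from $\|\eta_j\|_{V_2}^4$. Any such split forces $\|\eta_j\|_{V_2}$ into an $L^{4s}$-norm with $s>1$, but Assumption~\ref{assumption_mb}.ii) only grants $\mathbb{E}\|\eta_1\|_{V_2}^4<\infty$; the case $s=1$ would require $N\in L^\infty$, which is unavailable. The remedy is immediate: skip the $N^{p-1}$ trick and apply Minkowski in $L^4$ directly,
\[
\bigl\|\textstyle\sum_{j\ge 1}\|\eta_j\|_{V_2}\id_{\{N\ge j\}}\bigr\|_{L^4(\Omega)}\le \sum_{j\ge 1}\bigl(\mathbb{E}\|\eta_1\|_{V_2}^4\bigr)^{1/4}\P(N\ge j)^{1/4},
\]
using that $\{N\ge j\}\in\F^{\overline x}_{j-1}$ is independent of $\eta_j$. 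Your tail bound makes the right-hand series convergent since $(11+\hat\varepsilon)/8>1$. The same argument (or your H\"older route, which is fine for $\beta$ thanks to the $(11+\hat\varepsilon)$-moment) handles $\mathbb{E}\alpha_N^4$, and the proof closes exactly as you described.
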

\begin{proof} The desired measurability follows a fortiori from Lemma \ref{lemma_measint}. Moreover, employing Proposition \ref{prop_iid} yields that it suffices to prove that
\begin{align*}
\mathbb{E}\left|\left|\int \limits_{0}\limits^{\alpha_{e_{\overline{x}}(1)}}\Xi(\X_{\overline{x}}(\tau))d\tau\right|\right|_{W}^{2}<\infty.
\end{align*}
To this end, note that 
\begin{align*}
\mathbb{E}\left|\left|\int \limits_{0}\limits^{\alpha_{e_{\overline{x}}(1)}}\Xi(\X_{\overline{x}}(\tau))d\tau\right|\right|_{W}^{2}\leq \mathbb{E}\left(\int \limits_{0}\limits^{\alpha_{e_{\overline{x}}(1)}}\left|\left|\Xi(\X_{\overline{x}}(\tau))\right|\right|_{W}d\tau\right)^{2}\leq  \mathbb{E}\left(\sum \limits_{k=0}\limits^{e_{\overline{x}}(1)-1}\beta_{k+1}\left(c_{1}||\x_{\overline{x},k}||_{V_{2}}+c_{2}\right)\right)^{2},
\end{align*}
where the second inequality follows from Lemma \ref{lemma_Xbasicprop}.i), Assumption \ref{assumption_fe}.iii) and Lemma \ref{lemma_ineqx}.i).\\
Now introduce $\eta_{0}:=\overline{x}$, for notational conveniences. Moreover, by the aid of Assumption \ref{assumption_fe}.iii) and Lemma \ref{lemma_ineqx}.i), it is easy to verify inductively that
\begin{align}
\label{lemma_squareintegrabilityproofeq1}
||\x_{\overline{x},k}||_{V_{2}}\leq \sum \limits_{j=0}\limits^{k}||\eta_{k}||_{V_{2}},~\forall k \in \mathbb{N}_{0}.
\end{align}
Consequently, we get
\begin{align*}
\mathbb{E}\left|\left|\int \limits_{0}\limits^{\alpha_{e_{\overline{x}}(1)}}\Xi(\X_{\overline{x}}(\tau))d\tau\right|\right|_{W}^{2} \leq \mathbb{E}\left(\sum \limits_{k=0}\limits^{e_{\overline{x}}(1)-1}\beta_{k+1}(c_{1}\sum \limits_{j=0}\limits^{k}||\eta_{k}||_{V_{2}}+c_{2})\right)^{2}.
\end{align*}
Hence, we also have
\begin{align*}
\mathbb{E}\left|\left|\int \limits_{0}\limits^{\alpha_{e_{\overline{x}}(1)}}\Xi(\X_{\overline{x}}(\tau))d\tau\right|\right|_{W}^{2} \leq \sum \limits_{m=1}\limits^{\infty} \mathbb{E}\left(\left(\sum \limits_{k=0}\limits^{m-1}\beta_{k+1}(c_{1}\sum \limits_{j=0}\limits^{k}||\eta_{k}||_{V_{2}}+c_{2})\right)^{2}\id_{\{e_{\overline{x}}(1)=m\}}\right).
\end{align*}
Consequently, appealing to Cauchy-Schwarz' inequality implies
\begin{align}
\label{lemma_squareintegrabilityproofeq2}
\mathbb{E}\left|\left|\int \limits_{0}\limits^{\alpha_{e_{\overline{x}}(1)}}\Xi(\X_{\overline{x}}(\tau))d\tau\right|\right|_{W}^{2} \leq \sum \limits_{m=1}\limits^{\infty} \left(\mathbb{E}\left(\sum \limits_{k=0}\limits^{m-1}\beta_{k+1}(c_{1}\sum \limits_{j=0}\limits^{k}||\eta_{k}||_{V_{2}}+c_{2})\right)^{4}\right)^{\frac{1}{2}}\P(e_{\overline{x}}(1)=m)^{\frac{1}{2}}.
\end{align}
Now upper bounds for each factor of each summand of the preceding series will be derived.\\
So let $m \in \mathbb{N}$ be arbitrary but fixed. Then the triangle inequality, the independence of $(\beta_{k})_{k \in \mathbb{N}}$ and $(\eta_{m})_{m \in \mathbb{N}}$ as well as the fact that each of these sequences is identically distributed, yields
\begin{eqnarray*}
\left(\mathbb{E}\left(\sum \limits_{k=0}\limits^{m-1}\beta_{k+1}(c_{1}\sum \limits_{j=0}\limits^{k}||\eta_{k}||_{V_{2}}+c_{2})\right)^{4}\right)^{\frac{1}{4}}
& \leq & ~\sum \limits_{k=0}\limits^{m-1}||\beta_{k+1}||_{L^{4}(\Omega)}\left(c_{1}\sum \limits_{j=0}\limits^{k}||~||\eta_{k}||_{V_{2}}||_{L^{4}(\Omega)}+c_{2}\right)\\
& = & ~ ||\beta_{1}||_{L^{4}(\Omega)}c_{1}||~||\eta_{1}||_{V_{2}}||_{L^{4}(\Omega)}\frac{m(m+1)}{2}+||\beta_{1}||_{L^{4}(\Omega)}c_{2}m\\
& \leq & ~ m^{2}\left(||\beta_{1}||_{L^{4}(\Omega)}c_{1}||~||\eta_{1}||_{V_{2}}||_{L^{4}(\Omega)}+||\beta_{1}||_{L^{4}(\Omega)}c_{2}\right). 
\end{eqnarray*}
Note that $||\beta_{1}||_{L^{4}(\Omega)}<\infty$ and $||~||\eta_{1}||_{V_{2}}||_{L^{4}(\Omega)}<\infty$, by Assumption \ref{assumption_mb}.ii).\\
Consequently, by introducing $C:=\left(||\beta_{1}||_{L^{4}(\Omega)}c_{1}||~||\eta_{1}||_{V_{2}}||_{L^{4}(\Omega)}+||\beta_{1}||_{L^{4}(\Omega)}c_{2}\right)^{2}<\infty$, we get
\begin{align}
\label{lemma_squareintegrabilityproofeq3}
\left(\mathbb{E}\left(\sum \limits_{k=0}\limits^{m-1}\beta_{k+1}(c_{1}\sum \limits_{j=0}\limits^{k}||\eta_{k}||_{V_{2}}+c_{2})\right)^{4}\right)^{\frac{1}{2}} \leq Cm^{4},~\forall m \in \mathbb{N}.
\end{align}
Now for all $m \in \mathbb{N}\setminus \{1\}$ we have
\begin{align*}
\P(e_{\overline{x}}(1)=m)\leq \P(T(\beta_{k})\x_{\overline{x},k-1}\neq 0,~\forall k=1,...,m-1).
\end{align*}
Consequently, employing Assumption \ref{assumption_fe}.ii), which is possible due to Lemma \ref{lemma_ineqx}.i), yields
\begin{align*}
\P(e_{\overline{x}}(1)=m) \leq \P(-\kappa \beta_{k}+||\x_{\overline{x},k-1}||_{V_{1}}^{\rho}> 0,~\forall k=1,...,m-1)
\end{align*}
Hence by appealing to Lemma \ref{lemma_inclusionx} we get
\begin{align*}
\P(e_{\overline{x}}(1)=m) \leq \P\left(-\kappa \sum \limits_{k=1}\limits^{m-1}\beta_{k}+\sum \limits_{k=1}\limits^{m-2}||\eta_{k}||_{V_{1}}^{\rho}+||\eta_{0}||_{V_{1}}^{\rho}>0\right) = \P\left(\sum \limits_{k=1}\limits^{m-1}-\kappa\beta_{k}+||\eta_{k-1}||_{V_{1}}^{\rho}>0\right),
\end{align*}
for all $m \in \mathbb{N}\setminus \{1\}$. Now let $\nu:=\mathbb{E}(-\kappa\beta_{1}+||\eta_{0}||_{V_{1}}^{\rho})$, which is negative by Assumption \ref{assumption_mb}.iii). Consequently, we have
\begin{align}
\label{lemma_squareintegrabilityproofeq4}
\P(e_{\overline{x}}(1)=m) \leq \P\left(\left|\sum \limits_{k=1}\limits^{m-1}-\kappa\beta_{k}+||\eta_{k-1}||_{V_{1}}^{\rho}-\nu(m-1)\right|>|\nu|(m-1)\right)
\end{align}
for all $m \in \mathbb{N}\setminus \{1\}$. Hence, combining (\ref{lemma_squareintegrabilityproofeq2}), (\ref{lemma_squareintegrabilityproofeq3}) and (\ref{lemma_squareintegrabilityproofeq4}) yields
\begin{align*}
\mathbb{E}\left|\left|\int \limits_{0}\limits^{\alpha_{e_{\overline{x}}(1)}}\Xi(\X_{\overline{x}}(\tau))d\tau\right|\right|_{W}^{2} \leq C+ \sum \limits_{m=2}\limits^{\infty} Cm^{4}\P\left(\left|\sum \limits_{k=1}\limits^{m-1}-\kappa\beta_{k}+||\eta_{k-1}||_{V_{1}}^{\rho}-\nu(m-1)\right|>|\nu|(m-1)\right)^{\frac{1}{2}}
\end{align*}
Moreover, it is plain that $m \leq 2(m-1)$ for all $m \geq 2$ and consequently $m^{4}\leq 16(m-1)^{4}$, which yields by employing Cauchy Schwarz' inequality that
\begin{eqnarray*}
	& & ~
\mathbb{E}\left|\left|\int \limits_{0}\limits^{\alpha_{e_{\overline{x}}(1)}}\Xi(\X_{\overline{x}}(\tau))d\tau\right|\right|_{W}^{2}\\
& \leq & ~ C+ 16C\sum \limits_{m=1}\limits^{\infty} m^{4}\P\left(\left|\sum \limits_{k=1}\limits^{m}-\kappa\beta_{k}+||\eta_{k-1}||_{V_{1}}^{\rho}-\nu m\right|>|\nu|m\right)^{\frac{1}{2}}\\
& \leq & ~ C+ 16C\left(\sum \limits_{m=1}\limits^{\infty}m^{-1-\hat{\varepsilon}}\right)^{\frac{1}{2}}\left(\sum \limits_{m=1}\limits^{\infty} m^{9+\hat{\varepsilon}}\P\left(\left|\sum \limits_{k=1}\limits^{m}-\kappa\beta_{k}+||\eta_{k-1}||_{V_{1}}^{\rho}-\nu m\right|>|\nu|m\right)\right)^{\frac{1}{2}}.
\end{eqnarray*}
It is common knowledge that the first series in the preceding expression is finite. Consequently, the claim follows if the second is finite as well. To this end, note that the sequence $(-\kappa\beta_{k}+||\eta_{k-1}||_{V_{1}}^{\rho})_{k \in \mathbb{N}}$ is i.i.d. with mean $\nu$. Consequently, \cite[Theorem 1]{katz} yields
\begin{align*}
\sum \limits_{m=1}\limits^{\infty} m^{9+\hat{\varepsilon}}\P\left(\left|\sum \limits_{k=1}\limits^{m}-\kappa\beta_{k}+||\eta_{k-1}||_{V_{1}}^{\rho}-\nu m\right|>|\nu|m\right) < \infty,
\end{align*}
if (and only if) $-\kappa \beta_{1}+||\eta_{0}||_{V_{1}}^{\rho} \in L^{11+\hat{\varepsilon}}(\Omega)$, which is true by Assumption \ref{assumption_mb}.ii).
\end{proof}

Note that $(\varphi,\mathbb{R})\in SL_{V_{2}}(V)$, where $\varphi:V\rightarrow\mathbb{R}$ is the function which is constantly one. This plain fact, together with Proposition \ref{prop_iid} and Lemma \ref{lemma_squareintegrability} yields the following quite useful corollary.

\begin{corollary}\label{lemma_alphaiid} Let $x:\Omega \rightarrow V$ be an independent initial leading to extinction. Then the sequence $(\alpha_{e_{x}(n+1)}-\alpha_{e_{x}(n)})_{n \in \mathbb{N}}$ is square integrable and i.i.d with $\alpha_{e_{x}(n+1)}-\alpha_{e_{x}(n)}=\alpha_{e_{\overline{x}}(1)}$ in distribution.
\end{corollary}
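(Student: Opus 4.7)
The proof is essentially spelled out in the sentence preceding the corollary, so the plan is simply to make the substitution explicit and quote the two preceding results. First I would define $\varphi:V\to\mathbb{R}$ by $\varphi(v):=1$ for all $v\in V$, and verify that $(\varphi,\mathbb{R})\in SL_{V_{2}}(V)$: the target $(\mathbb{R},|\cdot|)$ is a separable Banach space, $\varphi$ is trivially $\mathfrak{B}(V)$-$\mathfrak{B}(\mathbb{R})$-measurable, and the sublinearity bound from Notation \ref{notation_xi}.iii) holds with $c_{1}:=0$ and $c_{2}:=1$, since $|\varphi(v)|=1\leq 0\cdot\|v\|_{V_{2}}+1$ for all $v\in V_{2}$.

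Next I would observe that with this choice of $\Xi$, the integrals appearing in Proposition \ref{prop_iid} and Lemma \ref{lemma_squareintegrability} reduce to the lengths of the integration intervals, namely
\begin{equation*}
\int\limits_{\alpha_{e_{x}(n)}}\limits^{\alpha_{e_{x}(n+1)}}\varphi(\X_{x}(\tau))\,d\tau=\alpha_{e_{x}(n+1)}-\alpha_{e_{x}(n)}\quad\text{a.s.},\qquad\forall n\in\mathbb{N},
\end{equation*}
and analogously $\int_{0}^{\alpha_{e_{\overline{x}}(1)}}\varphi(\X_{\overline{x}}(\tau))\,d\tau=\alpha_{e_{\overline{x}}(1)}$ almost surely.

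Applying Proposition \ref{prop_iid} to $(\varphi,\mathbb{R})$ then immediately yields that the sequence $(\alpha_{e_{x}(n+1)}-\alpha_{e_{x}(n)})_{n\in\mathbb{N}}$ is i.i.d.\ and that each term is equal to $\alpha_{e_{\overline{x}}(1)}$ in distribution. Similarly, Lemma \ref{lemma_squareintegrability} applied to $(\varphi,\mathbb{R})$ gives $\alpha_{e_{x}(n+1)}-\alpha_{e_{x}(n)}\in L^{2}(\Omega)$ for all $n\in\mathbb{N}$, which is the required square integrability. No genuine obstacle is present — the only thing to check is the very short membership $(\varphi,\mathbb{R})\in SL_{V_{2}}(V)$; everything else is a direct citation of the two preceding results.
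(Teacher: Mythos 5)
Your proposal is correct and is exactly the argument the paper intends: the paper's own justification is the sentence immediately preceding the corollary, which notes $(\varphi,\mathbb{R})\in SL_{V_{2}}(V)$ for the constant function $\varphi\equiv 1$ and then cites Proposition \ref{prop_iid} and Lemma \ref{lemma_squareintegrability}. You have merely spelled out the reduction of the integrals to interval lengths, which is a welcome but inessential elaboration.
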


\begin{lemma}\label{lemma_convcomp} Let $(U,||\cdot||_{U})$ be a separable Banach space. Moreover, let $(Y_{m})_{m \in \mathbb{N}}\subseteq \mathcal{M}(\Omega;U)$ be such that there is a $Y \in \mathcal{M}(\Omega;U)$, with $\lim \limits_{m \rightarrow \infty} Y_{m}=Y$ almost surely. Finally, let $(N_{t})_{t\geq 0}$, with \linebreak$N_{t}:\Omega \rightarrow \mathbb{N}$, be such that each $N_{t}$ is $\F$-$2^{\mathbb{N}}$-measurable and $\lim \limits_{t\rightarrow \infty}N_{t}=\infty$ almost surely. Then the convergence $\lim \limits_{t\rightarrow \infty} Y_{N_{t}}=Y$ takes place with probability one.
\end{lemma}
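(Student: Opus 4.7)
The plan is to proceed by a pathwise argument after first handling the measurability subtlety. I would begin by observing that $Y_{N_t}$ is a genuine $U$-valued random variable for each $t \geq 0$. Indeed, since $N_t$ is $\F$-$2^{\mathbb{N}}$-measurable, the sets $\{N_t=k\}$ lie in $\F$ for each $k\in\mathbb{N}$, so for any $B\in\B(U)$ one has the decomposition
\begin{align*}
\{Y_{N_t}\in B\}=\bigcup_{k\in\mathbb{N}}\bigl(\{N_t=k\}\cap\{Y_k\in B\}\bigr)\in\F,
\end{align*}
which makes the limit assertion well-posed.

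Next I would isolate the $\P$-null exceptional set. Let $M_1:=\{\omega:\ Y_m(\omega)\not\to Y(\omega)\text{ in }U\}$ and $M_2:=\{\omega:\ N_t(\omega)\not\to\infty\text{ as }t\to\infty\}$. Both have probability zero by hypothesis, so $M:=M_1\cup M_2$ is a $\P$-null set, and on $\Omega\setminus M$ the two convergences hold simultaneously.

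The remainder is a standard $\varepsilon$-chase, carried out for each fixed $\omega\in\Omega\setminus M$. Given $\varepsilon>0$, almost-sure convergence of the $Y_m$ supplies an index $K=K(\omega,\varepsilon)\in\mathbb{N}$ with $\|Y_m(\omega)-Y(\omega)\|_U<\varepsilon$ for all $m\geq K$. Since $N_t(\omega)\to\infty$, there exists $T=T(\omega,\varepsilon)\geq 0$ such that $N_t(\omega)\geq K$ for every $t\geq T$. Composing, one obtains $\|Y_{N_t(\omega)}(\omega)-Y(\omega)\|_U<\varepsilon$ for all $t\geq T$, which is exactly $\lim_{t\to\infty}Y_{N_t(\omega)}(\omega)=Y(\omega)$.

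I do not anticipate any real obstacle: the argument is essentially the classical "random subsequence" lemma, with the only mild point being that the index set $[0,\infty)$ is uncountable, which is harmless because the $\varepsilon$-$T$ formulation is pathwise and the measurability of $Y_{N_t}$ is secured by the discreteness of the range of $N_t$.
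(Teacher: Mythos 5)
Your proof is correct and follows essentially the same route as the paper's: isolate a single $\P$-null set outside of which both convergences hold, then run the pathwise $\varepsilon$-chase by first choosing the index threshold from the convergence of $(Y_m)$ and then the time threshold from $N_t\to\infty$. The only addition is your preliminary measurability check for $Y_{N_t}$ via the decomposition over $\{N_t=k\}$, which the paper omits but which is harmless and correct.
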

\begin{proof} Let $M\in \F$ be a $\P$-null-set such that $\lim \limits_{m \rightarrow \infty} Y_{m}(\omega)=Y(\omega)$ and $\lim \limits_{t\rightarrow \infty}N_{t}(\omega)=\infty$ for all \linebreak$\omega \in \Omega\setminus M$. Now fix one of these $\omega \in \Omega\setminus M$ and note that there is for each $\varepsilon>0$ an $m_{0}\in \mathbb{N}$ such that $||Y_{m}(\omega)-Y(\omega)||_{U}<\varepsilon$ for all $m \geq m_{0}$. In addition, we can find a $t_{0}\in [0,\infty)$ such that $N_{t}(\omega)\geq m_{0}$ for all $t \in [t_{0},\infty)$. Consequently, we get $||Y_{N_{t}(\omega)}(\omega)-Y(\omega)||_{U}<\varepsilon$ for all $t \geq t_{0}$, which yields the claim.
\end{proof}

Actually, the preceding result already enables us to prove our SLLN. But, to also prove our CLT, a version of Anscombe's CLT in type $2$ Banach spaces is needed. Since the standard CLT as well as Kolmogorov's inequality both hold if (and only if) the underlying Banach space is of type $2$, it is possible to prove our type $2$ version of Anscombe's theorem identically to the real-valued case. Since this is not that obvious from the statement of the theorem, the proof will be given. For a proof of Anscombe's theorem on the line see \cite[Theorem 3.2]{good}.

\begin{theorem}\label{theorem_anscombeclt} Let $(W,||\cdot||_{W})$ be a separable Banach space of type $2$, introduce $(Y_{m})_{m\in \mathbb{N}}\subseteq L^{2}(\Omega;W)$, $(N_{n})_{n\in \mathbb{N}}$, where $N_{n}:\Omega \rightarrow \mathbb{N}$ is $\F$-$2^{\mathbb{N}}$-measurable and $(\theta_{n})_{n\in \mathbb{N}}\subseteq (0,\infty)$. Moreover, assume that
\begin{enumerate}
	\item $(Y_{m})_{m \in \mathbb{N}}$ is i.i.d. and $\mathbb{E}Y_{1}=0$ and
	\item $\lim \limits_{n  \rightarrow \infty} \theta_{n}=\infty$ and  $\lim \limits_{n  \rightarrow \infty} \frac{N_{n}}{\theta_{n}}=1$ in probability.
\end{enumerate}
Then the convergence
\begin{align}
\label{theorem_ascombeclteq1}
\lim \limits_{n  \rightarrow \infty} \frac{1}{\sqrt{\theta_{n}}}\sum \limits_{k=1}\limits^{N_{n}}Y_{k}= \lim \limits_{n  \rightarrow \infty} \frac{1}{\sqrt{N_{n}}}\sum \limits_{k=1}\limits^{N_{n}}Y_{k}=Z,
\end{align}
takes place in distribution, where $Z \sim N_{W}(0,\Cov_{W}(Y_{1}))$.
\end{theorem}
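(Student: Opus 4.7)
The plan is to mimic the classical real-valued proof of Anscombe's theorem, using the type $2$ assumption at the single point where one needs a moment bound on partial sums.

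First I would prove the convergence $\frac{1}{\sqrt{\theta_{n}}}\sum_{k=1}^{N_{n}}Y_{k}\to Z$ in distribution; the second identity in (\ref{theorem_ascombeclteq1}) will then follow immediately. Indeed, writing $S_{m}:=\sum_{k=1}^{m}Y_{k}$, one has $S_{N_{n}}/\sqrt{N_{n}}=\sqrt{\theta_{n}/N_{n}}\cdot S_{N_{n}}/\sqrt{\theta_{n}}$, and the assumption $N_{n}/\theta_{n}\to 1$ in probability gives $\sqrt{\theta_{n}/N_{n}}\to 1$ in probability; a Slutsky-type argument (which is valid in separable Banach spaces for scalar multiplication by a real-valued sequence converging in probability) then transfers convergence in distribution from $S_{N_{n}}/\sqrt{\theta_{n}}$ to $S_{N_{n}}/\sqrt{N_{n}}$.

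For the main convergence I would decompose
\begin{align*}
\frac{S_{N_{n}}}{\sqrt{\theta_{n}}}=\frac{S_{\lfloor\theta_{n}\rfloor}}{\sqrt{\theta_{n}}}+\frac{S_{N_{n}}-S_{\lfloor\theta_{n}\rfloor}}{\sqrt{\theta_{n}}}.
\end{align*}
For the first summand, the type $2$ CLT for i.i.d. sequences gives $S_{\lfloor\theta_{n}\rfloor}/\sqrt{\lfloor\theta_{n}\rfloor}\to Z$ in distribution, and multiplying by the deterministic factor $\sqrt{\lfloor\theta_{n}\rfloor/\theta_{n}}\to 1$ yields $S_{\lfloor\theta_{n}\rfloor}/\sqrt{\theta_{n}}\to Z$. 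Hence it suffices to show that the second summand tends to $0$ in probability, for then an application of the portmanteau theorem delivers the claim.

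To estimate the remainder, fix $\varepsilon>0$, pick an auxiliary $\delta>0$, and split
\begin{align*}
\P\left(\left\|\tfrac{S_{N_{n}}-S_{\lfloor\theta_{n}\rfloor}}{\sqrt{\theta_{n}}}\right\|_{W}>\varepsilon\right)\leq \P(|N_{n}-\lfloor\theta_{n}\rfloor|>\delta\theta_{n})+\P\left(\max_{|k-\lfloor\theta_{n}\rfloor|\leq \delta\theta_{n}}\|S_{k}-S_{\lfloor\theta_{n}\rfloor}\|_{W}>\varepsilon\sqrt{\theta_{n}}\right).
\end{align*}
The first probability tends to $0$ by the second hypothesis. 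For the second, the stationarity of the increments of $(S_{m})_{m\in\mathbb{N}}$ lets one bound it by $2\P(\max_{j\leq \lceil\delta\theta_{n}\rceil}\|S_{j}\|_{W}>\varepsilon\sqrt{\theta_{n}})$. This is the step where the type $2$ property is essential: since $(S_{j})_{j\in\mathbb{N}}$ is a $W$-valued martingale (for the natural filtration generated by $(Y_{k})$) and $\|\cdot\|_{W}$ is convex, $(\|S_{j}\|_{W})_{j\in\mathbb{N}}$ is a real submartingale, so Doob's $L^{2}$ maximal inequality combined with the type $2$ bound $\mathbb{E}\|S_{m}\|_{W}^{2}\leq C\,m\,\mathbb{E}\|Y_{1}\|_{W}^{2}$ yields
\begin{align*}
\P\left(\max_{j\leq \lceil\delta\theta_{n}\rceil}\|S_{j}\|_{W}>\varepsilon\sqrt{\theta_{n}}\right)\leq \frac{4\,\mathbb{E}\|S_{\lceil\delta\theta_{n}\rceil}\|_{W}^{2}}{\varepsilon^{2}\theta_{n}}\leq \frac{4C(\delta\theta_{n}+1)\mathbb{E}\|Y_{1}\|_{W}^{2}}{\varepsilon^{2}\theta_{n}}.
\end{align*}
Letting first $n\to\infty$ and then $\delta\to 0$, this bound becomes arbitrarily small, which finishes the argument.

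The main obstacle, conceptually, is verifying that the type $2$ assumption is exactly what is needed to run the maximal inequality on $\|S_{j}\|_{W}$ and to invoke the Banach space CLT simultaneously; once these two ingredients are in place, the structure is genuinely identical to the scalar case, and the only care required is to keep the Slutsky step clean in the Banach-valued setting.
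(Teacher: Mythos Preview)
Your proof is correct and follows essentially the same route as the paper's: decompose $S_{N_{n}}/\sqrt{\theta_{n}}$ around the deterministic index $\lfloor\theta_{n}\rfloor$ (the paper uses $\lceil\theta_{n}\rceil$), handle the main term via the type~$2$ CLT, and kill the remainder in probability by splitting on $|N_{n}/\theta_{n}-1|\leq\delta$ and applying a maximal inequality together with the type~$2$ second-moment bound. The only cosmetic differences are that the paper proves the $1/\sqrt{N_{n}}$ statement first and derives the $1/\sqrt{\theta_{n}}$ one via Slutsky (you do the reverse), and that the paper invokes Kolmogorov's inequality in type~$2$ spaces directly while you obtain the same estimate via Doob's maximal inequality for the submartingale $(\|S_{j}\|_{W})_{j}$.
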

\begin{proof} Firstly, the claim is trivial if $Y_{1}=0$ a.s., so assume w.l.o.g. $Y_{1}\neq 0$. Now, introduce $S_{n}:=\sum \limits_{k=1}\limits^{n}Y_{k}$, $\hat{S}_{n}:=\frac{1}{\sqrt{n}}S_{n}$ for all $n \in \mathbb{N}$ and let us start by proving the second equality in (\ref{theorem_ascombeclteq1}). Appealing to the CLT in type 2 Banach spaces, see \cite[Corollary 3.3 and Remark 1.1]{CLT}, yields $\lim \limits_{n \rightarrow \infty}\hat{S}_{n}=Z$ in distribution. Now set $\tilde{\theta}_{n}:=\lceil\theta_{n}\rceil$ and note that clearly $\lim \limits_{n \rightarrow \infty}\hat{S}_{\tilde{\theta}_{n}}=Z$ in distribution and $\lim \limits_{n  \rightarrow \infty} \frac{N_{n}}{\tilde{\theta}_{n}}=1$ in probability. Moreover, as $\hat{S}_{N_n}=(\hat{S}_{\tilde{\theta}_{n}}+\frac{S_{N_n}-S_{\tilde{\theta}_{n}}}{\sqrt{\tilde{\theta}_{n}}})\sqrt{\frac{\tilde{\theta}_{n}}{N_n}}$ for all $n \in \mathbb{N}$, Slutsky's theorem\footnote{We were unable to find a direct reference for Slutsky's theorem in the Banach space setting. However, this is easily deduced from \cite[Theorem 3.9]{Billingsley} and the continuous mapping theorem.} yields that the second equality in (\ref{theorem_ascombeclteq1}) follows, if
\begin{align}
\label{theorem_anscombeproofeq1}
\lim \limits_{n \rightarrow \infty}\frac{S_{N_n}-S_{\tilde{\theta}_{n}}}{\sqrt{\tilde{\theta}_{n}}}=0,
\end{align}
in probability.\\
So let us prove (\ref{theorem_anscombeproofeq1}). To this end, let $\varepsilon>0$, $\delta \in (0,1)$, $r_{n}:=\lceil \tilde{\theta}_{n}(1-\delta)\rceil$ and $R_{n}:=\lfloor \tilde{\theta}_{n}(1+\delta)\rfloor$ for all $n \in \mathbb{N}$. And note that it is plain that
\begin{align*}
\P\left(||S_{N_{n}}-S_{\tilde{\theta}_n}||_{W}>\varepsilon\sqrt{\tilde{\theta}_{n}}\right)\leq \P\left(||S_{N_{n}}-S_{\tilde{\theta}_n}||_{W}>\varepsilon\sqrt{\tilde{\theta}_{n}},~ \left|\frac{N_{n}}{\tilde{\theta}_n}-1\right|\leq \delta\right)+\P\left(\left|\frac{N_{n}}{\tilde{\theta}_n}-1\right|> \delta\right)
\end{align*}
for all $n \in \mathbb{N}$. Moreover, as $\left|\frac{N_{n}}{\tilde{\theta}_n}-1\right|\leq \delta$ if and only if $N_{n}\in [r_{n},R_{n}]$, we get
\begin{align*}
\P\left(||S_{N_{n}}-S_{\tilde{\theta}_n}||_{W}>\varepsilon\sqrt{\tilde{\theta}_{n}},~ \left|\frac{N_{n}}{\tilde{\theta}_n}-1\right|\leq \delta\right) \leq \P\left(\max \limits_{m=r_n,..,R_n}||S_{m}-S_{\tilde{\theta}_n}||_{W}>\varepsilon\sqrt{\tilde{\theta}_{n}}\right),
\end{align*}
for all $n \in \mathbb{N}$. In addition, note that
\begin{eqnarray*}
\max \limits_{m=r_n,..,R_n}||S_{m}-S_{\tilde{\theta}_n}||_{W}
& \leq & ~ \max \limits_{m=r_n,..,\tilde{\theta}_n-1}\left|\left|\sum \limits_{k=m+1}\limits^{\tilde{\theta}_n}Y_{k}\right|\right|_{W}+\max \limits_{m=\tilde{\theta}_n+1,..,R_n}\left|\left|\sum \limits_{k=\tilde{\theta}_n+1}\limits^{m}Y_{k}\right|\right|_{W}\\
& = & ~ \max \limits_{m=1,..,\tilde{\theta}_n-r_n}\left|\left|\sum \limits_{k=1}\limits^{m}Y_{\tilde{\theta}_{n}+1-k}\right|\right|_{W}+\max \limits_{m=1,..,R_n-\tilde{\theta}_n}\left|\left|\sum \limits_{k=1}\limits^{m}Y_{k+\tilde{\theta}_n}\right|\right|_{W}
\end{eqnarray*}
where we set $ \max \limits_{m=a,..,b}(\cdot):=0$, if $a>b$.\\ 
Using this, together with the well known inequality $P(X_{1}+X_{2}>t)\leq \P(2X_{1}> t)+\P(2X_{2}> t)$, for any $X_{1},X_{2}\in\mathcal{M}(\Omega;\mathbb{R})$, $t > 0$ and Kolmogorov's inequality in type $2$ Banach spaces (see \cite[Theorem 6.1]{CLT}), yields that there is a constant $C>0$ such that
\begin{align*}
\P\left(\max \limits_{m=r_n,..,R_n}||S_{m}-S_{\tilde{\theta}_n}||_{W}>\varepsilon\sqrt{\tilde{\theta}_{n}}\right)\leq \frac{4C\mathbb{E}||Y_{1}||_{W}^{2}}{\varepsilon^{2}{\tilde{\theta}_n}}(\tilde{\theta}_{n}-r_{n})+\frac{4C\mathbb{E}||Y_{1}||_{W}^{2}}{\varepsilon^{2}{\tilde{\theta}_n}}(R_{n}-\tilde{\theta}_{n}),
\end{align*}
for all $n \in \mathbb{N}$. Now let $\varepsilon^{\prime}>0$ be arbitrary but fixed and choose $0<\delta<\min\left(\frac{\varepsilon^{2}\varepsilon^{\prime}}{8C\mathbb{E}||Y_{1}||_{W}^{2}},1\right)$, then we get
\begin{align*}
\P\left(\max \limits_{m=r_n,..,R_n}||S_{m}-S_{\tilde{\theta}_n}||_{W}>\varepsilon\sqrt{\tilde{\theta}_{n}}\right) \leq \frac{4C\mathbb{E}||Y_{1}||_{W}^{2}}{\varepsilon^{2}{\tilde{\theta}_n}}(R_n-r_n) \leq \frac{8C\mathbb{E}||Y_{1}||_{W}^{2}}{\varepsilon^{2}}\delta \leq \varepsilon^{\prime}.
\end{align*}
Conclusively, putting it all together yields $\limsup \limits_{n \rightarrow \infty } \P\left(||S_{N_{n}}-S_{\tilde{\theta}_n}||_{W}>\varepsilon\sqrt{\tilde{\theta}_{n}}\right) \leq \varepsilon^{\prime}$, which implies (\ref{theorem_anscombeproofeq1}), since $\varepsilon^{\prime}>0$ can be chosen arbitrarily small. Finally, the first inequality in (\ref{theorem_ascombeclteq1}) now follows from the second one an Slutsky's theorem.
\end{proof}

\begin{theorem}\label{theorem_sllncltmain} Let $(\Xi,(W,||\cdot||_{W}))\in SL_{V_{2}}(V)$ and let $x:\Omega \rightarrow V$ be an independent initial leading to extinction. Moreover, introduce $\nu:=\frac{1}{\mathbb{E}(\alpha_{e_{\overline{x}}(1)})}\mathbb{E}\left(\int \limits_{0}\limits^{\alpha_{e_{\overline{x}}(1)}}\Xi (\X_{\overline{x}}(\tau))d\tau\right)$. Then the convergence
\begin{align}
\label{theorem_sllnclteq1}
\lim \limits_{t\rightarrow \infty} \frac{1}{t}\int \limits_{0}\limits^{t}\Xi(\X_{x}(\tau))d\tau= \nu,
\end{align}
takes place almost surely in $(W,||\cdot||_{W})$\footnote{This of course means convergence for $\P$-a.e. $\omega \in \Omega$ with respect to $||\cdot||_{W}$. So far it seems redundant to write "almost surely w.r.t. $||\cdot||_{W}$", instead of just "almost surely". But later on we will choose $W$ as a subspace of $V$, which makes it necessary to emphasize w.r.t. which norm the almost sure convergence is taking place.}. Moreover, if $(W,||\cdot||_{W})$ is of type $2$, then
\begin{align}
\label{theorem_sllnclteq2}
\lim \limits_{t  \rightarrow \infty}\frac{1}{\sqrt{t}}\left(\int \limits_{0}\limits^{t}\Xi(\X_{x}(\tau))d\tau-t\nu\right)=Z,
\end{align}
in distribution, as elements of $(W,||\cdot||_{W})$\footnote{Again, in the next theorem it becomes clear why we emphasize on the fact that these are elements of $(W,||\cdot||_{W})$.}, where $Z \sim N_{W}(0,Q)$ and the covariance is given by\linebreak $Q:=\Cov_{W}\left(\sqrt{\frac{1}{\mathbb{E}(\alpha_{e_{\overline{x}}(1)})}}\int \limits_{0}\limits^{\alpha_{e_{\overline{x}}(1)}}\Xi (\X_{\overline{x}}(\tau))-\nu d\tau\right)$.
\end{theorem}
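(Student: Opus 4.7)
The plan is to reduce both assertions to the i.i.d.\ regeneration structure established in Proposition \ref{prop_iid} and Corollary \ref{lemma_alphaiid}, and then to deal with the random number of completed ``cycles'' by means of Lemma \ref{lemma_convcomp} for the SLLN and Theorem \ref{theorem_anscombeclt} for the CLT. Throughout the argument, set
\begin{align*}
I_{n}:=\int \limits_{\alpha_{e_{x}(n)}}\limits^{\alpha_{e_{x}(n+1)}}\Xi(\X_{x}(\tau))d\tau,\quad A_{n}:=\alpha_{e_{x}(n+1)}-\alpha_{e_{x}(n)},\quad a:=\mathbb{E}\alpha_{e_{\overline{x}}(1)},\quad \mu:=\mathbb{E}I_{1},
\end{align*}
so that $\nu=\mu/a$. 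Introduce the random counter $N_{t}:=\sup\{n\in\mathbb{N}:\alpha_{e_{x}(n)}\le t\}$. Since $(A_{n})$ is i.i.d.\ in $L^{2}(\Omega)$ with mean $a>0$ by Corollary \ref{lemma_alphaiid}, the real-valued SLLN gives $\alpha_{e_{x}(n)}/n\to a$ almost surely, hence $N_{t}<\infty$, $N_{t}\to\infty$ and $N_{t}/t\to 1/a$ almost surely. Both parts start from the telescoping decomposition
\begin{align*}
\int \limits_{0}\limits^{t}\Xi(\X_{x}(\tau))d\tau=\int \limits_{0}\limits^{\alpha_{e_{x}(1)}}\Xi(\X_{x}(\tau))d\tau+\sum \limits_{k=1}\limits^{N_{t}-1}I_{k}+R_{t},\qquad R_{t}:=\int \limits_{\alpha_{e_{x}(N_{t})}}\limits^{t}\Xi(\X_{x}(\tau))d\tau.
\end{align*}

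For the SLLN, the Banach-space SLLN for separable spaces applies because $(I_{n})$ is i.i.d.\ in $L^{2}(\Omega;W)\subseteq L^{1}(\Omega;W)$ by Proposition \ref{prop_iid} and Lemma \ref{lemma_squareintegrability}, yielding $\frac{1}{n}\sum_{k=1}^{n}I_{k}\to \mu$ almost surely in $W$. Combined with $N_{t}/t\to 1/a$ a.s.\ and Lemma \ref{lemma_convcomp}, this gives $\frac{1}{t}\sum_{k=1}^{N_{t}-1}I_{k}\to \nu$ almost surely in $W$. The leading ``burn-in'' term in the decomposition is almost surely finite, so after division by $t$ it vanishes. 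For the remainder, Remark \ref{lemma_xichoice}.i gives $(\|\Xi(\cdot)\|_{W},\mathbb{R})\in SL_{V_{2}}(V)$, so Lemma \ref{lemma_squareintegrability} shows that $\hat{I}_{n}:=\int \limits_{\alpha_{e_{x}(n)}}\limits^{\alpha_{e_{x}(n+1)}}\|\Xi(\X_{x}(\tau))\|_{W}d\tau$ is an i.i.d.\ sequence in $L^{2}(\Omega)$; in particular $\hat{I}_{n}/n\to 0$ a.s. Since $\|R_{t}\|_{W}\le\hat{I}_{N_{t}}$, it follows that $\|R_{t}\|_{W}/t\to 0$ a.s., which together with the previous step proves (\ref{theorem_sllnclteq1}).

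For the CLT, apply Remark \ref{lemma_xichoice}.ii with $w=-\nu$ to get $\Xi-\nu\in SL_{V_{2}}(V)$; re-using Proposition \ref{prop_iid} and Lemma \ref{lemma_squareintegrability} on this functional shows that $\tilde Y_{n}:=I_{n}-A_{n}\nu$ is an i.i.d.\ sequence in $L^{2}(\Omega;W)$, centered (since $\mu=a\nu$), and in distribution equal to $\int_{0}^{\alpha_{e_{\overline{x}}(1)}}(\Xi(\X_{\overline{x}}(\tau))-\nu)d\tau$. Subtracting $t\nu$ on both sides of the decomposition and rearranging yields
\begin{align*}
\int \limits_{0}\limits^{t}\Xi(\X_{x}(\tau))d\tau-t\nu=\Big[\int \limits_{0}\limits^{\alpha_{e_{x}(1)}}\Xi(\X_{x}(\tau))d\tau-\alpha_{e_{x}(1)}\nu\Big]+\sum \limits_{k=1}\limits^{N_{t}-1}\tilde Y_{k}+\big[R_{t}-(t-\alpha_{e_{x}(N_{t})})\nu\big].
\end{align*}
The first bracket is almost surely finite and is thus $o(\sqrt{t})$. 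The last bracket is bounded in $W$-norm by $\hat{I}_{N_{t}}+A_{N_{t}}\|\nu\|_{W}$; because $(\hat{I}_{n})$ and $(A_{n})$ are $L^{2}(\Omega)$-i.i.d., Chebyshev gives $\sum_{n}\mathbb{P}(\hat{I}_{n}>\varepsilon\sqrt{n})<\infty$, and likewise for $(A_{n})$, whence Borel--Cantelli yields $\hat{I}_{n}/\sqrt{n}\to 0$ and $A_{n}/\sqrt{n}\to 0$ a.s.; combined with $N_{t}/t\to 1/a$ this forces the bracket divided by $\sqrt{t}$ to tend to $0$ almost surely. Finally, Theorem \ref{theorem_anscombeclt}, applied along any $t_{n}\to\infty$ with $Y_{k}:=\tilde Y_{k}$, $\theta_{n}:=t_{n}/a$ and random index $N_{t_{n}}-1$ (whose ratio to $\theta_{n}$ converges in probability to $1$ thanks to $N_{t}/t\to 1/a$), delivers $\frac{1}{\sqrt{t}}\sum_{k=1}^{N_{t}-1}\tilde Y_{k}\to Z$ in distribution with $Z\sim N_{W}\bigl(0,\tfrac{1}{a}\Cov_{W}(\tilde Y_{1})\bigr)=N_{W}(0,Q)$. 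An application of Slutsky's theorem to the three-term splitting above then gives (\ref{theorem_sllnclteq2}).

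The main obstacle I expect is the clean bookkeeping of the tail piece between $\alpha_{e_{x}(N_{t})}$ and $t$: the $o(t)$ estimate sufficient for the SLLN is immediate from a pointwise SLLN applied to $(\hat{I}_{n}/n)$, but the $o(\sqrt{t})$-in-probability estimate required for the CLT is more delicate since one probes a random block selected by $t$. The Borel--Cantelli argument sketched above is what makes this step go through, and it essentially consumes the full strength of Assumption \ref{assumption_mb}.ii (propagated to $(\hat I_{n})$ by the moment calculation in Lemma \ref{lemma_squareintegrability}).
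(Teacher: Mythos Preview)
Your approach is essentially the paper's: regeneration decomposition via Proposition \ref{prop_iid} and Corollary \ref{lemma_alphaiid}, renewal counting $N_t/t\to 1/a$, Banach-space SLLN together with Lemma \ref{lemma_convcomp} for (\ref{theorem_sllnclteq1}), and Anscombe (Theorem \ref{theorem_anscombeclt}) plus an $o(\sqrt t)$ remainder estimate for (\ref{theorem_sllnclteq2}). The paper differs only cosmetically: it centers first via $\Xi_{\nu}:=\Xi-\nu$, sums up to $L(t)+1$ rather than $N_t-1$, and controls the boundary piece by two consecutive blocks instead of one.

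There is, however, one genuine slip in your remainder argument for the CLT. You write ``Chebyshev gives $\sum_{n}\mathbb{P}(\hat{I}_{n}>\varepsilon\sqrt{n})<\infty$'', but Chebyshev only yields $\mathbb{P}(\hat{I}_{n}>\varepsilon\sqrt{n})\le \mathbb{E}\hat{I}_{1}^{2}/(\varepsilon^{2}n)$, and the harmonic series diverges. The conclusion $\hat{I}_{n}/\sqrt{n}\to 0$ a.s.\ is nonetheless true, and the fix is immediate: either use the layer-cake equivalence $\sum_{n}\mathbb{P}(\hat I_{1}^{2}>\varepsilon^{2} n)<\infty\Leftrightarrow \mathbb{E}\hat I_{1}^{2}<\infty$ and then Borel--Cantelli, or (as the paper does) apply the real-valued SLLN to the i.i.d.\ $L^{1}$ sequence $(\hat I_{n}^{2})$ and read off
\[
\frac{1}{n}\hat I_{n}^{2}=\frac{1}{n}\sum_{k=1}^{n}\hat I_{k}^{2}-\frac{n-1}{n}\cdot\frac{1}{n-1}\sum_{k=1}^{n-1}\hat I_{k}^{2}\longrightarrow 0\quad\text{a.s.}
\]
The same repair handles $(A_{n})$. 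With this correction your argument goes through.
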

\begin{proof} Until explicitly stated otherwise, $(W,||\cdot||_{W})$ is not necessarily of type $2$.\\ 
Firstly, note that both expectations occurring in the definition of $\nu$ are indeed finite by Proposition \ref{prop_iid}, Lemma \ref{lemma_squareintegrability} and Corollary \ref{lemma_alphaiid}. Now, introduce $\Xi_{\nu}:V\rightarrow W$, by $\Xi_{\nu}(v):=\Xi(v)-\nu$ for all $v \in V$; and $(Y_{k})_{k \in \mathbb{N}_{0}}$, with $Y_{k}:\Omega \rightarrow W$ for all $k \in \mathbb{N}_{0}$, by $Y_{k}:=\int \limits_{\alpha_{e_{x}(k)}}\limits^{\alpha_{e_{x}(k+1)}}\Xi_{\nu}(\X_{x}(\tau))d\tau$ for all $k \in \mathbb{N}$ and $Y_{0}:=\int \limits_{0}\limits^{\alpha_{e_{\overline{x}}(1)}}\Xi_{\nu}(\X_{\overline{x}}(\tau))d\tau$. Finally, let $L(t):\Omega \rightarrow \mathbb{N}_{0}$ be defined by $L(t):=\max(k \in \mathbb{N}_{0}:~\alpha_{e_{x}(k)}\leq t)$ for all $t\geq 0$, where $e_{x}(0):=0$\\
Now we will proceed by proving the following assertions, from which (\ref{theorem_sllnclteq1}) as well as (\ref{theorem_sllnclteq2}) will follow quickly.
\begin{enumerate}
	\item $\mathbb{E}\alpha_{e_{\overline{x}}(1)}>0$ and $\lim \limits_{t  \rightarrow \infty}\frac{L(t)+1}{t}=\frac{1}{\mathbb{E}\alpha_{e_{\overline{x}}(1)}}$ almost surely.
	\item $\Xi_{\nu}\in SL_{V_{2}}(V)$ and consequently $(Y_{m})_{m\in \mathbb{N}}\subseteq L^{2}(\Omega;W)$ is centered, i.i.d. and $Y_{m}=Y_{0}$ in distribution for all $m \in \mathbb{N}$.
	\item $\lim \limits_{t  \rightarrow \infty}\frac{1}{\sqrt{t}}\left(\int \limits_{0}\limits^{t}\Xi_{\nu}(\X_{x}(\tau))d\tau-\sum \limits_{k=1}\limits^{L(t)+1}Y_{k}\right)=0$ almost surely.
\end{enumerate}
Proof of i). Firstly, note that $\P(L(t)<\infty,~\forall t \geq 0)=1$, since: Employing Corollary \ref{lemma_alphaiid} and the SLLN yields
\begin{align}
\label{theorem_sllncltproofeq1}
\lim \limits_{k \rightarrow \infty}\frac{1}{k} \alpha_{e_{x}(k)}=\lim \limits_{k \rightarrow \infty}\frac{1}{k}\alpha_{e_{x}(1)}+\frac{k-1}{k}\frac{1}{k-1} \sum \limits_{j=1}\limits^{k-1} (\alpha_{e_{x}(j+1)}-\alpha_{e_{x}(j)}) = \mathbb{E}\alpha_{e_{\overline{x}}(1)}> 0,
\end{align}
almost surely, where the last inequality follows from $\alpha_{e_{\overline{x}}(1)}\geq \alpha_{1}>0$ almost surely. Consequently, if there were a $t\geq 0$ such that $\P(L(t)=\infty)>0$, then
\begin{align*}
0<\P(L(t)=\infty)=\P(\alpha_{e_{x}(k)}\leq t,~\forall k \in \mathbb{N})\leq\P(\lim \limits_{k \rightarrow\infty}\frac{1}{k} \alpha_{e_{x}(k)}- \frac{t}{k}\leq 0)=0.
\end{align*}
Hence, $\P(L(t)<\infty)=1$ for a given $t$, which yields $\P(L(t)<\infty,~\forall t \geq 0)=1$, as the paths of $L(t)$ are clearly increasing with probability one.\\ 
Moreover, it is plain to verify that the simple but quite useful inequality
\begin{align}
\label{theorem_sllncltproofeq0}
\alpha_{e_{x}(L(t))}\leq t \leq \alpha_{e_{x}(L(t)+1)},~\forall t\geq 0
\end{align}
takes place with probability one. Particularly, we have
\begin{align*}
\frac{\alpha_{e_{x}(L(t))}}{L(t)+1}\leq \frac{t}{L(t)+1} \leq \frac{\alpha_{e_{x}(L(t)+1)}}{L(t)+1}
\end{align*}
for all $t\geq 0$, almost surely. Furthermore, thanks to (\ref{theorem_sllncltproofeq1}), it is plain that also $\lim \limits_{k \rightarrow \infty}\frac{1}{k} \alpha_{e_{x}(k-1)}=\mathbb{E}\alpha_{e_{\overline{x}}(1)}$ almost surely. Consequently, if $\lim\limits_{t  \rightarrow \infty} L(t)+1=\infty$ a.s., then employing (\ref{theorem_sllncltproofeq1}), Lemma \ref{lemma_convcomp}, the previous inequality as well as the sandwich lemma give i). Hence, i) follows once $\lim\limits_{t  \rightarrow \infty}L(t)=\infty$ a.s. is proven.\\
To this end, let $M  \in \F$ be a $\P$-null-set, such that $\alpha_{e_{x}(k)}(\omega)$ is well-defined for all $k \in \mathbb{N}_{0}$ and such that $\lim \limits_{k \rightarrow \infty}\frac{1}{k} \alpha_{e_{x}(k)}(\omega)=\mathbb{E}\alpha_{e_{\overline{x}}(1)}$, for all $\omega \in \Omega \setminus M$. Now fix one these $\omega$ and note that there is for a given $\varepsilon>0$ a $k_{0}\in \mathbb{N}$, such that $\left|\frac{1}{k} \alpha_{e_{x}(k)}(\omega)-\mathbb{E}\alpha_{e_{\overline{x}}(1)}\right|<\varepsilon$ for all $k \geq k_{0}$. Hence, choosing $\varepsilon=\mathbb{E}\alpha_{e_{\overline{x}}(1)}$ yields the existence of a $k_{0}\in \mathbb{N}$, with
$0<\alpha_{e_{x}(k)}(\omega)<2k\mathbb{E}\alpha_{e_{\overline{x}}(1)}$ for all $k \geq k_{0}$, and hence
\begin{align*}
\sup \limits_{t  \geq 0} L(t)(\omega) \geq \sup \limits_{k \geq k_{0}} L(2k\mathbb{E}\alpha_{e_{\overline{x}}(1)})(\omega)\geq  \sup \limits_{k \geq k_{0}} k =\infty.
\end{align*}
Finally, this implies $\lim \limits_{t  \rightarrow \infty } L(t)=\infty$ a.s., since $M$ is a $\P$-null-set and $L$ has paths that increase with probability one.\\
Proof of ii). Employing Remark \ref{lemma_xichoice}.ii) yields that $\Xi_{\nu}\in SL_{V_{2}}(V)$. Consequently, appealing to Lemma \ref{lemma_squareintegrability} as well as Proposition \ref{prop_iid} yields all claims in ii), except for $\mathbb{E}Y_{k}=0$ for all $k \in \mathbb{N}_{0}$. But this is plain, since $Y_{k}=Y_{0}$ in distribution gives
\begin{align*}
\mathbb{E}Y_{k}=\mathbb{E}Y_{0}=\mathbb{E}\left(\int \limits_{0}\limits^{\alpha_{e_{\overline{x}}(1)}}\Xi_{\nu}(\X_{\overline{x}}(\tau))d\tau\right) = \mathbb{E}\left(\int \limits_{0}\limits^{\alpha_{e_{\overline{x}}(1)}}\Xi(\X_{\overline{x}}(\tau))d\tau\right)-\nu \mathbb{E}\alpha_{e_{\overline{x}}(1)}=0,
\end{align*}
for all $k \in \mathbb{N}_{0}$.\\
Proof of iii). Let us start by proving that
\begin{align}
\label{theorem_sllncltproofeq2}
\lim \limits_{t  \rightarrow \infty}\frac{1}{\sqrt{t}}\int \limits_{\alpha_{e_{x}(L(t))}}\limits^{\alpha_{e_{x}(L(t)+2)}}||\Xi_{\nu}(\X_{x}(\tau))||_{W}d\tau=0
\end{align}
with probability one. Firstly, ii) and Remark \ref{lemma_xichoice}.i) yield $(||\Xi_{v}||_{W},\mathbb{R})\in SL_{V_{2}}(V)$. Consequently, invoking Lemma \ref{lemma_squareintegrability} and Proposition \ref{prop_iid} yields that $\left(\left(\int \limits_{\alpha_{e_{x}(n)}}\limits^{\alpha_{e_{x}(n+1)}}||\Xi_{\nu}(\X_{x}(\tau))||_{W}d\tau\right)^{2}\right)_{n\in \mathbb{N}}$ is integrable and i.i.d. Hence by appealing to the SLLN we get
\begin{eqnarray*}
	& & ~
\lim \limits_{n  \rightarrow \infty} \frac{1}{n}\left(\int \limits_{\alpha_{e_{x}(n)}}\limits^{\alpha_{e_{x}(n+1)}}||\Xi_{\nu}(\X_{x}(\tau))||_{W}d\tau\right)^{2}\\
& = & ~ \lim \limits_{n  \rightarrow \infty} \frac{1}{n}\sum\limits_{k=1}\limits^{n}\left(\int \limits_{\alpha_{e_{x}(k)}}\limits^{\alpha_{e_{x}(k+1)}}||\Xi_{\nu}(\X_{x}(\tau))||_{W}d\tau\right)^{2}-\frac{n-1}{n}\frac{1}{n-1}\sum\limits_{k=1}\limits^{n-1}\left(\int \limits_{\alpha_{e_{x}(k)}}\limits^{\alpha_{e_{x}(k+1)}}||\Xi_{\nu}(\X_{x}(\tau))||_{W}d\tau\right)^{2}\\
& = & ~ 0
\end{eqnarray*}
almost surely. Consequently, we also get
\begin{eqnarray*}
	& & ~
\lim \limits_{n  \rightarrow \infty} \frac{1}{\sqrt{n}}\int \limits_{\alpha_{e_{x}(n-1)}}\limits^{\alpha_{e_{x}(n+1)}}||\Xi_{\nu}(\X_{x}(\tau))||_{W}d\tau\\
& = & ~ \lim \limits_{n  \rightarrow \infty} \sqrt{\frac{n-1}{n}}\frac{1}{\sqrt{n-1}}\int \limits_{\alpha_{e_{x}(n-1)}}\limits^{\alpha_{e_{x}(n)}}||\Xi_{\nu}(\X_{x}(\tau))||_{W}d\tau+\frac{1}{\sqrt{n}}\int \limits_{\alpha_{e_{x}(n)}}\limits^{\alpha_{e_{x}(n+1)}}||\Xi_{\nu}(\X_{x}(\tau))||_{W}d\tau\\
& = & ~ 0.
\end{eqnarray*}
almost surely. In addition, i) enables us to apply Lemma \ref{lemma_convcomp} to the preceding equality, which gives
\begin{align*}
\lim \limits_{t  \rightarrow \infty} \frac{1}{\sqrt{L(t)+1}}\int \limits_{\alpha_{e_{x}(L(t))}}\limits^{\alpha_{e_{x}(L(t)+2)}}||\Xi_{\nu}(\X_{x}(\tau))||_{W}d\tau=0
\end{align*}
almost surely; this yields (\ref{theorem_sllncltproofeq2}) by employing i) once more. Finally, appealing to (\ref{theorem_sllncltproofeq0}), while having in mind (\ref{theorem_sllncltproofeq2}), yields
\begin{eqnarray*}
	& & ~
	\lim \limits_{t  \rightarrow \infty}\frac{1}{\sqrt{t}}\left|\left|\int \limits_{0}\limits^{t}\Xi_{\nu}(\X_{x}(\tau))d\tau-\sum \limits_{k=1}\limits^{L(t)+1}Y_{k}\right|\right|_{W}\\
	& \leq & ~ \lim \limits_{t  \rightarrow \infty}\frac{1}{\sqrt{t}}\left|\left|\int \limits_{0}\limits^{t}\Xi_{\nu}(\X_{x}(\tau))d\tau-\int \limits_{ 0}\limits^{ \alpha_{e_{x}(L(t)+2)}}\Xi_{\nu}(\X_{x}(\tau))d\tau\right|\right|_{W}+ \frac{1}{\sqrt{t}}\left|\left|\int \limits_{ 0}\limits^{ \alpha_{e_{x}(1)}}\Xi_{\nu}(\X_{x}(\tau))d\tau\right|\right|_{W}\\
	& \leq & ~ \lim \limits_{t  \rightarrow \infty}\frac{1}{\sqrt{t}}\int \limits_{ \alpha_{e_{x}(L(t))}}\limits^{ \alpha_{e_{x}(L(t)+2)}}\left|\left|\Xi_{\nu}(\X_{x}(\tau))\right|\right|_{W}d\tau\\
	& = & ~ 0,
\end{eqnarray*}
with probability one.\\
Now (\ref{theorem_sllnclteq1}) will be proven. Firstly, ii) and the SLLN in separable Banach spaces, see \cite[Corollary  7.10]{PIBS}, enable us to conclude that $\lim \limits_{n  \rightarrow \infty} \frac{1}{n}\sum \limits_{k=1}\limits^{n}Y_{k}=0$ a.s. Using this, as well as Lemma \ref{lemma_convcomp} and i) gives
\begin{align*}
\lim \limits_{t  \rightarrow \infty} \frac{1}{t} \sum \limits_{k=1}\limits^{L(t)+1}Y_{k}=\lim \limits_{t  \rightarrow \infty} \frac{L(t)+1}{t} \frac{1}{L(t)+1}\sum \limits_{k=1}\limits^{L(t)+1}Y_{k}=0,
\end{align*}
with probability one. Conclusively, Appealing to the previous equality, while having in mind iii), implies
\begin{eqnarray*}
	& & ~
	\lim \limits_{t  \rightarrow \infty }\left|\left|\frac{1}{t}\int \limits_{0}\limits^{t}\Xi(\X_{x}(\tau))d\tau- \nu\right|\right|_{W}\\
	& \leq & ~ 	\lim \limits_{t  \rightarrow \infty }\frac{1}{\sqrt{t}}\frac{1}{\sqrt{t}}\left|\left|\int \limits_{0}\limits^{t}\Xi_{\nu}(\X_{x}(\tau))d\tau- \sum \limits_{k=1}\limits^{L(t)+1}Y_{k}\right|\right|_{W}+\left|\left|\frac{1}{t} \sum \limits_{k=1}\limits^{L(t)+1}Y_{k}\right|\right|_{W}\\
	& = & ~ 0,
\end{eqnarray*}
with probability one, which proves (\ref{theorem_sllnclteq1}).\\
Finally, let us prove (\ref{theorem_sllnclteq2}). Consequently, from now on it is assumed that $(W,||\cdot||_{W})$ is a type $2$ Banach space. Let $(t_{n})_{n\in \mathbb{N}}\subseteq (0,\infty)$ be such that $\lim \limits_{n \rightarrow\infty}t_{n}=\infty$ and $(\theta_{n})_{n\in \mathbb{N}}$,  by $\theta_{n}:=\frac{t_{n}}{\mathbb{E}\alpha_{e_{\overline{x}}(1)}}$ for all $n \in \mathbb{N}$ and note that i) yields $\lim \limits_{n  \rightarrow \infty}\frac{L(t_{n})+1}{\theta_{n}}=1$ almost surely, and particularly in probability. Moreover, in light of ii), it is obvious that the sequence $(\frac{1}{\sqrt{\mathbb{E}\alpha_{e_{\overline{x}}(1)}}}Y_{n})_{n\in \mathbb{N}}$ is also centered, square integrable, i.i.d. and that each $\frac{1}{\sqrt{\mathbb{E}\alpha_{e_{\overline{x}}(1)}}}Y_{n}$ is distributed as $\frac{1}{\sqrt{\mathbb{E}\alpha_{e_{\overline{x}}(1)}}}Y_{0}$. These results enable us to employ Theorem \ref{theorem_anscombeclt}, which yields
\begin{align*}
\lim \limits_{n  \rightarrow \infty} \frac{1}{\sqrt{t_{n}}}\sum \limits_{k=1}\limits^{L(t_{n})+1}Y_{k}=\lim \limits_{n  \rightarrow \infty} \frac{1}{\sqrt{\theta_{n}}}\sum \limits_{k=1}\limits^{L(t_{n})+1}\frac{1}{\sqrt{\mathbb{E}\alpha_{e_{\overline{x}}(1)}}}Y_{k}=Z,
\end{align*}
in distribution. Finally, invoking iii) yields
\begin{align*}
\lim \limits_{n  \rightarrow \infty}\frac{1}{\sqrt{t_{n}}}\left(\int \limits_{0}\limits^{t_{n}}\Xi(\X_{x}(\tau))d\tau-t_{n}\nu\right)-\frac{1}{\sqrt{t_{n}}}\sum \limits_{k=1}\limits^{L(t_{n})+1}Y_{k}=0,
\end{align*}
almost surely and consequently
\begin{align*}
\lim \limits_{n  \rightarrow \infty}\frac{1}{\sqrt{t_{n}}}\left(\int \limits_{0}\limits^{t_{n}}\Xi(\X_{x}(\tau))d\tau-t_{n}\nu\right)= Z,
\end{align*}
in distribution, by \cite[Theorem 3.1]{Billingsley}, which gives the claim as $(t_{n})_{n\in \mathbb{N}}$ was arbitrary. (By the very definition of convergence in distribution it is clear that it suffices to consider sequences.)
\end{proof}

Now note that for $\Xi:V\rightarrow V_{2}$ with $\Xi(v):=v$, if $v \in V_{2}$ and $\Xi(v):=0$, if $v \in V \setminus V_{2}$, it is easy to verify that $(\Xi,(V_{2},||\cdot||_{V_{2}}))\in SL_{V_{2}}(V)$. Moreover, for $\xi:V\rightarrow \mathbb{R}$ with $\xi(v):=||v||_{V_{2}}$ if $v \in V_{2}$ and $\xi(v):=0$ for $v \in V \setminus V_{2}$, we also get $(\xi,\mathbb{R})\in SL_{V_{2}}(V)$. Using these facts together with the preceding theorem and Lemma \ref{lemma_Xbasicprop}.i) yields the following corollary.

\begin{corollary}\label{theorem_corvectorvalued} Let $x:\Omega \rightarrow V$ be an independent initial leading to extinction. Then the following assertions hold.
\begin{enumerate}
	\item $\lim \limits_{t\rightarrow \infty} \frac{1}{t}\int \limits_{0}\limits^{t}\X_{x}(\tau)d\tau= \nu_{1}$ almost surely in $(V_{2},||\cdot||_{V_{2}})$, where $\nu_{1}:=\frac{1}{\mathbb{E}(\alpha_{e_{\overline{x}}(1)})}\mathbb{E}\left(\int \limits_{0}\limits^{\alpha_{e_{\overline{x}}(1)}}\X_{\overline{x}}(\tau)d\tau\right)$.
	\item $\lim \limits_{t\rightarrow \infty} \frac{1}{t}\int \limits_{0}\limits^{t}||\X_{x}(\tau)||_{V_{2}}d\tau= \nu_{2}$ almost surely, where $\nu_{2}:=\frac{1}{\mathbb{E}(\alpha_{e_{\overline{x}}(1)})}\mathbb{E}\left(\int \limits_{0}\limits^{\alpha_{e_{\overline{x}}(1)}}||\X_{\overline{x}}(\tau)||_{V_{2}}d\tau\right)$.
	\item $\lim \limits_{t  \rightarrow \infty}\frac{1}{\sqrt{t}}\left(\int \limits_{0}\limits^{t}||\X_{x}(\tau)||_{V_{2}}d\tau-t\nu_{2}\right)=Z_{1}$ in distribution, where $Z_{1}\sim N(0,\sigma^{2})$ and $\sigma^{2} \in [0,\infty)$ is given by $\sigma^{2}:= \frac{1}{\mathbb{E}(\alpha_{e_{\overline{x}}(1)})}\mathbb{E}\left(\int \limits_{0}\limits^{\alpha_{e_{\overline{x}}(1)}}||\X_{\overline{x}}(\tau)||_{V_{2}}-\nu_{2} d\tau\right)^{2}$.
	\item If $(V_{2},||\cdot||_{V_{2}})$ is in addition of type $2$, then $\lim \limits_{t  \rightarrow \infty}\frac{1}{\sqrt{t}}\left(\int \limits_{0}\limits^{t}\X_{x}(\tau)d\tau-t\nu_{1}\right)=Z_{2}$ in distribution, as elements of $(V_{2},||\cdot||_{V_{2}})$, where $Z_{2}\sim N_{V_{2}}(0,Q)$ and $Q:=\Cov_{V_{2}}\left(\sqrt{\frac{1}{\mathbb{E}(\alpha_{e_{\overline{x}}(1)})}}\int \limits_{0}\limits^{\alpha_{e_{\overline{x}}(1)}}\X_{\overline{x}}(\tau)-\nu_{1} d\tau\right)$.
\end{enumerate}
\end{corollary}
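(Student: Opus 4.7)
The plan is to reduce the statement to a direct application of Theorem \ref{theorem_sllncltmain} by choosing two concrete elements of $SL_{V_{2}}(V)$: the ``embedding'' $\Xi_{1}:V\to V_{2}$ given by $\Xi_{1}(v):=v$ for $v\in V_{2}$ and $\Xi_{1}(v):=0$ otherwise; and the ``norm'' functional $\xi:V\to\R$ given by $\xi(v):=\|v\|_{V_{2}}$ for $v\in V_{2}$ and $\xi(v):=0$ otherwise.

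First I would verify that $(\Xi_{1},(V_{2},\|\cdot\|_{V_{2}}))$ and $(\xi,\R)$ both lie in $SL_{V_{2}}(V)$. Separability of $V_{2}$ and $\R$ is given, and sub-linearity is immediate with $c_{1}=1$, $c_{2}=0$, since $\|\Xi_{1}(v)\|_{V_{2}}=\|v\|_{V_{2}}$ and $|\xi(v)|=\|v\|_{V_{2}}$ on $V_{2}$ while both vanish on $V\setminus V_{2}$. The only genuine point of care is the $\B(V)$-measurability: for $\xi$ this is exactly the conclusion of Remark \ref{lemma_meas1} applied with $\hat{U}=V_{2}$; for $\Xi_{1}$ the Lusin--Souslin argument recorded there yields $\B(V_{2})\subseteq\B(V)$, so that for any $B\in\B(V_{2})$ the preimage $\Xi_{1}^{-1}(B)$ equals either $B$ or $B\cup(V\setminus V_{2})$ depending on whether $0\in B$, and in both cases it is Borel in $V$.

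Next I would use Lemma \ref{lemma_Xbasicprop}.i) (noting that, by Remark \ref{remark_xbar}, $\overline{x}$ is itself an independent initial leading to extinction) to obtain $\P(\X_{x}(\tau)\in V_{2}\text{ and }\X_{\overline{x}}(\tau)\in V_{2}\text{ for all }\tau\ge 0)=1$. Therefore $\Xi_{1}(\X_{x}(\tau))=\X_{x}(\tau)$ and $\xi(\X_{x}(\tau))=\|\X_{x}(\tau)\|_{V_{2}}$ hold almost surely for all $\tau\ge 0$, and analogously for $\X_{\overline{x}}$; consequently, all four integrals appearing in the statement coincide almost surely with integrals of $\Xi_{1}(\X_{\bullet}(\tau))$ or $\xi(\X_{\bullet}(\tau))$ to which Theorem \ref{theorem_sllncltmain} directly applies.

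Finally I would invoke Theorem \ref{theorem_sllncltmain} twice. With $(\Xi,W)=(\Xi_{1},V_{2})$ the SLLN assertion (\ref{theorem_sllnclteq1}) yields i) with the claimed $\nu_{1}$, and --- under the additional hypothesis of iv) that $(V_{2},\|\cdot\|_{V_{2}})$ is of type $2$ --- the CLT assertion (\ref{theorem_sllnclteq2}) yields iv) with the centered Gaussian $Z_{2}\sim N_{V_{2}}(0,Q)$ described there. With $(\Xi,W)=(\xi,\R)$, the SLLN part gives ii), and since $\R$ is trivially of type $2$ the CLT part gives iii) with the stated variance $\sigma^{2}$. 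No single step stands out as a real obstacle; the proof is essentially a bookkeeping exercise, and the only mildly delicate point is the $\B(V)$-measurability of $\Xi_{1}$ and $\xi$ away from $V_{2}$, which is handled as above via Remark \ref{lemma_meas1}.
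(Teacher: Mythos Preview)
Your proposal is correct and follows essentially the same route as the paper: the paper introduces precisely the two functionals $\Xi(v):=v\cdot\id_{V_{2}}(v)$ (valued in $V_{2}$) and $\xi(v):=\|v\|_{V_{2}}\cdot\id_{V_{2}}(v)$ (valued in $\R$), notes that both lie in $SL_{V_{2}}(V)$, and then appeals to Theorem~\ref{theorem_sllncltmain} together with Lemma~\ref{lemma_Xbasicprop}.i). Your write-up in fact supplies more detail than the paper does, in particular the $\B(V)$-measurability of $\Xi_{1}$ via $\B(V_{2})\subseteq\B(V)$ from Remark~\ref{lemma_meas1}, which the paper leaves as ``easy to verify''.
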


\section{Asymptotic Results for the weighted p-Laplacian evolution Equation}
\label{plaplace}
The purpose of this section is to apply the results developed in Section \ref{section_asymptotics} to the weighted p-Laplacian evolution equation with Neumann boundary conditions on an $L^{1}$-space, for "small" values of $p$. The existence and uniqueness theory for this equation can be found in \cite{mazon}. Moreover, \cite{ich1} deals with asymptotic results for this equation.\\

Throughout this section, let $n \in \mathbb{N}\setminus \{1\}$  and $\emptyset \neq  S  \subseteq \mathbb{R}^{n}$ be a non-empty, open, connected and bounded sets of class $C^{1}$.
Moreover, let $p \in (1,\infty)\setminus \{2\}$ and set $L^{q}(S,\mathbb{R}^{m}):=L^{q}(S,\B(S),\lambda;\mathbb{R}^{m})$, for any $q \in [1,\infty]$ and $m \in \mathbb{N}$, where $\lambda$ denotes the Lebesgue measure. This is further abbreviated by $L^{q}(S)$, if $m=1$. In addition, introduce $L^{q}_{0}(S):=\{f\in L^{q}(S):~\overline{(f)}=0\}$, where $\overline{(f)}:=\frac{1}{\lambda(S)}\int \limits_{S}fd\lambda$.\\ 
Now, let $\gamma: S   \rightarrow (0,\infty)$ be such that $\gamma \in L^{\infty}( S )$, $\gamma^{\frac{1}{1-p}} \in L^{1}( S)$ and assume that there is an $A_{p}$-Muckenhoupt weight (see, \cite[page 4]{ich1}) $\gamma_{0}:\mathbb{R}^{n}\rightarrow \mathbb{R}$ such that $\gamma_{0}|_{ S }=\gamma$ a.e.  on $S$. Moreover, set
\begin{align*}
p_{0}:=\inf\{q>1:\gamma^{\frac{1}{1-q}}\in L^{1}(S)\}.
\end{align*}
It is plain that $p_{0}\leq p$. In fact, we even have $p_{0}<p$, cf. \cite[Lemma 4.3]{ich1}.
Moreover, $W_{\gamma}^{1,p}( S )$ denotes the weighted Sobolev space defined by
\begin{align*}
W_{\gamma }^{1,p}( S ):=\{f \in L^{p}( S ): f \text{ is weakly diff. and } ~\gamma^{\frac{1}{p}}\nabla f \in L^{p}( S;\mathbb{R}^{n})\}. 
\end{align*} 
Throughout this section, $|\cdot|_{n}$ is the Euclidean norm on $\mathbb{R}^{n}$ and for any $x,y\in\R^{n}$, $x\cdot y$ denotes the canonical inner product of these vectors.\\
Using these notations we introduce the following weighted p-Laplacian operator with Neumann boundary conditions:

\begin{definition}\label{definifition_plaplaceop} Let $A: D(A)\rightarrow 2^{L^{1}(S)}$ be defined by: $(f,\hat{f}) \in A$ if and only if the following assertions hold.
	\begin{enumerate}
		\item $f \in W^{1,p}_{\gamma}( S ) \cap L^{\infty}( S )$. 
		\item $\hat{f} \in L^{1}( S )$.
		\item $\int \limits_{ S}  \gamma|\nabla f|_{n}^{p-2}\nabla f\cdot\nabla \varphi  d \lambda = \int \limits_{ S } \hat{f} \varphi d \lambda$ for all $\varphi\in W^{1,p}_{\gamma }( S )\cap L^{\infty}( S )$.
	\end{enumerate}
\end{definition}

\begin{remark} It is an easy exercise to see that the integrals occurring in Definition \ref{definifition_plaplaceop}.iii) exist and are finite. Moreover, one also verifies that $A$ is single-valued, see \cite[Lemma 3.1]{ich1}.\\
In addition,  note that if one chooses $\gamma=1$ on $S$, then $A$ is simply the $p$-Laplacian operator with Neumann boundary conditions.
\end{remark}

\begin{remark} It turns out that $A$ is not m-accretive but that its closure is. Throughout this section, $\A:D(\A)\rightarrow 2^{L^{1}(S)}$ denotes the closure of $A$, i.e. $(f,\hat{f})\in \A$ if there is a sequence $((f_{m},\hat{f}_{m}))_{m \in \mathbb{N}}\subseteq A$ such that $\lim \limits_{m \rightarrow \infty} (f_{m},\hat{f}_{m})=(f,\hat{f})$, in $L^{1}(S)\times L^{1}(S)$.\\
Actually, it is possible to determine the closure explicitly, see \cite[Proposition 3.6]{mazon} or \cite[Definition 2.2]{ich1}. But the explicit description of the closure is quite technical and not needed for our purposes, therefore it will be omitted.
\end{remark}

\begin{remark}\label{lemma_a11} $\A$ is densely defined and m-accretive, see \cite[Theorem 3.7]{mazon}. In the sequel, let  $T_{\A}(\cdot)u:[0,\infty)\rightarrow L^{1}(S)$, where $u \in L^{1}(S)$, be such that $(T_{\A}(t))_{t\geq 0}$ is the semigroup associated to $\A$, see Remark \ref{remark_msex}.\\
Moreover, it is an easy exercise to deduce from  \cite[Lemma 3.3]{ich1} that $||T_{\A}(t)u||_{L^{q}(S)} \leq ||u||_{L^{q}(S)}$ for all $t\geq 0$, $u \in L^{q}(S)$ and $q\in [1,\infty]$. In addition, $T_{\A}$ preserves mass, i.e. $\overline{(T_{\A}(t)v)}=\overline{(v)}$ for all $v \in L^{1}(S)$, see \cite[Lemma 3.4]{ich1}. Combining these results yields that $(L^{q}_{0}(S),||\cdot||_{L^{q}(S)})$ as well as $(L^{q}(S),||\cdot||_{L^{q}(S)})$ are invariant with respect to $T_{\A}$, for all $q \in [1,\infty]$. Moreover, it is clear that the injections $L^{q}(S)\hookrightarrow L^{1}(S)$ and $L^{q}_{0}(S)\hookrightarrow L^{1}(S)$ are continuous. 
\end{remark}

The following lemma will be extracted from \cite{ich1} and enables us to apply the results of Section \ref{section_asymptotics} to the weighted $p$-Laplacian evolution equation.

\begin{lemma}\label{lemma_a12} Assume that the interval $\left(\frac{p_{0}(n-2)}{n+2}+p_{0},2\right)$ is non-empty and that $p \in \left(\frac{p_{0}(n-2)}{n+2}+p_{0},2\right)$. In addition, introduce $\rho:=2-p$ and
\begin{align*}
\kappa:=(2-p)\left(\tilde{C}_{ S }^{p} \left( C_{ S ,\frac{2n}{n+2}}^{\frac{2n}{n+2}}+1\right)^{\frac{np+2p}{2n}} \tilde{\Gamma}_{n,p} \right)^{-1}>0,
\end{align*}
where: $\tilde{C}_{S}$ is the operator norm of the continuous injection $W^{1,\frac{2n}{n+2}}\hookrightarrow L^{2}(S)$; $C_{S,\frac{2n}{n+2}}$ is the Poincar\'{e} constant (see \cite[p. 10]{ich1}) of $S$ in $L^{\frac{2n}{n+2}}(S)$; and
\begin{align*}
\tilde{\Gamma}_{n,p}:=\left( \int \limits_{ S } \gamma^{\frac{2n}{2n-np-2p}}d\lambda \right)^{\frac{np+2p-2n}{2n}}<\infty.
\end{align*}
Then we have 
\begin{align}
\label{lemma_assumptionextinctioneq1}
||T_{\A}(t)u||_{L^{2}(S)}^{\rho}\leq (-\kappa t +||u||_{L^{2}(S)}^{\rho})_{+},
\end{align}
for all $t\geq 0$ and $u\in L^{2}_{0}(S)$.
\end{lemma}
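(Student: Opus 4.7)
The plan is to exhibit a weighted Sobolev--Poincar\'e inequality of the form $\|u\|_{L^2(S)}^p \leq C\int_S \gamma|\nabla u|_n^p\,d\lambda$ on $L^2_0(S)\cap W^{1,p}_\gamma(S)$, with $C$ equal to $(2-p)/\kappa$, and then to feed it into the standard energy identity for the flow. Writing $\phi(t):=\|T_{\A}(t)u\|_{L^2(S)}^2$, the inequality together with the identity $\phi'(t) = -2\int_S \gamma|\nabla T_{\A}(t)u|_n^p\,d\lambda$, obtained by testing the equation against $T_{\A}(t)u$ itself via Definition \ref{definifition_plaplaceop}.iii), yields the Bernoulli-type differential inequality $\phi'(t)\leq -\tfrac{2}{C}\phi(t)^{p/2}$. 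Rewriting the left-hand side as $\tfrac{2}{2-p}\bigl(\phi^{(2-p)/2}\bigr)'$ and integrating gives $\phi(t)^{(2-p)/2}\leq\bigl(\phi(0)^{(2-p)/2}-\kappa t\bigr)_+$, which is the announced estimate once one recalls $\rho=2-p$.

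The Sobolev--Poincar\'e chain is to be assembled in three steps, and the three factors appearing in the expression for $\kappa$ correspond precisely to those steps. First, the continuous injection $W^{1,2n/(n+2)}(S)\hookrightarrow L^2(S)$ contributes $\tilde C_S^p$ after raising to the $p$-th power. Second, the Poincar\'e inequality on $L^2_0(S)$ in $L^{2n/(n+2)}$, combined with the elementary bound $a+b\leq(1+C^r)b$ whenever $a\leq Cb$, produces the factor $(C_{S,2n/(n+2)}^{2n/(n+2)}+1)^{(np+2p)/(2n)}$ when the $W^{1,2n/(n+2)}$-norm is raised to the $p$-th power; here $(np+2p)/(2n)$ is just $p/(2n/(n+2))$. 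Third, H\"older's inequality applied to $\int |\nabla u|_n^{2n/(n+2)}\,d\lambda=\int\bigl(\gamma|\nabla u|_n^p\bigr)^{2n/(p(n+2))}\gamma^{-2n/(p(n+2))}\,d\lambda$ with conjugate exponents $p(n+2)/(2n)$ and $p(n+2)/(pn+2p-2n)$ yields $\tilde\Gamma_{n,p}$. The restriction $p\in(p_0(n-2)/(n+2)+p_0,2)$ is exactly what is needed so that the dual exponent is admissible and $\gamma^{2n/(2n-np-2p)}\in L^1(S)$ follows from the definition of $p_0$ (noting that $p_0(n-2)/(n+2)+p_0=2np_0/(n+2)$) together with the $A_p$-Muckenhoupt assumption on $\gamma_0$.

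The main obstacle is that the energy identity is a priori justified only for sufficiently regular solutions, whereas the lemma concerns mild solutions of a doubly nonlinear equation driven by the closure $\A$ of the single-valued operator $A$. The natural remedy is to argue first for $u\in D(A)\cap L^2_0(S)$, where one may legitimately use $\varphi=u$ as a test function in Definition \ref{definifition_plaplaceop}.iii) and differentiate $\|T_{\A}(t)u\|_{L^2(S)}^2$, and then to extend the resulting estimate to an arbitrary $u\in L^2_0(S)$ by density of $D(A)$ in $L^2_0(S)$ combined with the $L^2$-contractivity of $T_{\A}$ recorded in Remark \ref{lemma_a11}. Since this whole program has already been carried out for precisely this operator in \cite{ich1}, the cleanest presentation is simply to cite the corresponding estimate there and verify that the constant $\kappa$ displayed above coincides with the one produced by the three-step Sobolev--Poincar\'e chain.
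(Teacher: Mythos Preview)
Your proposal is essentially the paper's proof: both derive the Bernoulli inequality from the energy identity plus the weighted Sobolev--Poincar\'e chain (which the paper simply cites as \cite[Eq.~(5.7)]{ich1} while you spell out the three factors), argue first for $u\in D(A)\cap L^2_0(S)$, and then pass to general $u\in L^2_0(S)$ by density. Two technical refinements the paper makes explicit that you gloss over: (i) since $\phi^{\rho/2}$ is not differentiable where $\phi=0$, the paper restricts to $[0,t^\ast_u-\varepsilon]$ with $t^\ast_u$ the extinction time, establishes Lipschitz continuity of $t\mapsto\|T_{\A}(t)u\|_{L^2}^{\rho}$ there via \cite[Lemma~5.2]{ich1}, and only then integrates; (ii) Remark~\ref{lemma_a11} records only $\|T_{\A}(t)v\|_{L^q}\leq\|v\|_{L^q}$, not contractivity of differences in $L^2$, so the density step is carried out via $L^1$-convergence, passage to an a.e.\ convergent subsequence, and Fatou's lemma rather than by a direct $L^2$ limit.
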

\begin{proof} Firstly, $\tilde{\Gamma}_{n,p}$ is indeed finite, see \cite[Lemma 5.4]{ich1}. Now assume $u \in D(A)\cap L^{2}_{0}(S)$ and introduce $t^{\ast}_{u}:= \inf(t\geq 0:~T_{\A}(t)u=0)$. If $t^{\ast}_{u}=0$, then (by continuity) $u=0$, and consequently $T_{\A}(t)u=0$ for all $t\geq 0$. In this case (\ref{lemma_assumptionextinctioneq1}) is trivial. Hence, assume $t^{\ast}_{u}>0$. Moreover, we have $t^{\ast}_{u}<\infty$, see \cite[Lemma 5.4]{ich1}. Now let $\varepsilon \in (0,t^{\ast}_{u})$ be arbitrary but fixed, introduce $f_{u}:[0,t_{u}^{\ast}-\varepsilon]\rightarrow [0,\infty)$, by $f_{u}(t):=||T_{\A}(t)u||_{L^{2}(S)}^{\rho}$ and $\hat{\varepsilon}:=||T_{\A}(t^{\ast}_{u}-\varepsilon)u||_{L^{2}(S)}^{2}>0$.\\
It can be inferred from the results in \cite{ich1} that $f_{u}$ is Lipschitz continuous, more precisely: The mapping $[0,t_{u}^{\ast}-\varepsilon]\ni t \mapsto ||T_{\A}(t)u||_{L^{2}(S)}^{2}$ is Lipschitz continuous, see \cite[Lemma 5.2]{ich1}. Moreover, it is common knowledge that $[\hat{\varepsilon},||u||_{L^{2}(s)}^{2}]\ni x \mapsto x^{\frac{\rho}{2}}$ is Lipschitz continuous as well, since $\rho \in (0,1)$ and by construction $\hat{\varepsilon}>0$. Consequently, $f_{u}$ is (as it is the composition of Lipschitz continuous functions) Lipschitz continuous.\\
Particularly, $f_{u}$ is differentiable almost everywhere and by \cite[Lemma 5.3]{ich1} we get
\begin{align*}
f_{u}^{\prime}(t)=\frac{\partial}{\partial t}\left(\int \limits_{S}T_{\A}(t)u^{2}\right)^{1-\frac{p}{2}}=(p-2)||T_{\A}(t)u||_{L^{2}(S)}^{-p}\int \limits_{S}\gamma|\nabla T_{\A}(t)u|_{n}^{p}d\lambda,
\end{align*}
for a.e. $t\in [0,t_{u}^{\ast}-\varepsilon]$. Consequently, appealing to \cite[Eq. (5.7)]{ich1}, yields $f_{u}^{\prime}(t)\leq -\kappa$ for a.e. $t\in [0,t_{u}^{\ast}-\varepsilon]$. Hence, we obtain
\begin{align*}
f_{u}(t)-f_{u}(0)= \int \limits_{0}\limits^{t}f_{u}^{\prime}(\tau)dt\leq -\kappa t,~\forall t \in [0,t_{u}^{\ast}-\varepsilon],
\end{align*}
i.e. $||T_{\A}(t)u||_{L^{2}(S)}^{\rho}\leq -\kappa t +||u||_{L^{2}(S)}^{\rho}$ which holds for all $t \in [0,t^{\ast}_{u})$, as $\varepsilon>0$ was arbitrary. Moreover, note that as $[0,\infty)\ni t \mapsto ||T_{\A}(t)u||_{L^{1}(S)}$ is a continuous, monotonically decreasing map, we have $T_{\A}(t)u=0$ for all $t \geq t^{\ast}_{u}$. Consequently, the preceding inequality enables us to conclude that
\begin{align*}
||T_{\A}(t)u||_{L^{2}(S)}^{\rho}\leq (-\kappa t +||u||_{L^{2}(S)}^{\rho})_{+},
\end{align*}
for all $t \geq 0$ and $u \in D(A)\cap L^{2}_{0}(S)$.\\ 
Now let $u \in L^{2}_{0}(S)$ be arbitrary. Then there is a sequence $(u_{m})_{m\in \mathbb{N}}\subseteq D(A)$, such that $\lim \limits_{m \rightarrow \infty}u_{m}=u$ in $L^{2}(S)$, cf. \cite[Lemma 5.6]{ich1}. Moreover, one instantly verifies that also $u_{m}-\overline{(u_{m})}\in D(A)$ for all $m \in \mathbb{N}$. Consequently, as $D(A)\subseteq L^{\infty}(S)\subseteq L^{2}(S)$, we get $u_{m}-\overline{(u_{m})}\in D(A)\cap L^{2}_{0}(S)$ for all $m \in \mathbb{N}$ and that
\begin{align*}
\lim \limits_{m \rightarrow \infty} u_{m}-\overline{(u_{m})}=u-\overline{(u)}=u,
\end{align*}
in $L^{2}(S)$. In addition, by continuity we have $\lim \limits_{m \rightarrow \infty} T_{\A}(t)(u_{m}-\overline{(u_{m})})=T_{\A}(t)u$ in $L^{1}(S)$ and (by passing to a subsequence if necessary) also almost everywhere on $S$. Conclusively, appealing to Fatou's Lemma yields
\begin{align*}
||T_{\A}(t)u||^{\rho}_{L^{2}(S)}\leq \liminf\limits_{m \rightarrow \infty} ||T_{\A}(t)(u_{m}-\overline{(u_{m})})||_{L^{2}(S)}^{\rho} \leq  (-\kappa t +||u||_{L^{2}(S)}^{\rho})_{+},
\end{align*}
for all $t\geq 0$.
\end{proof}

In the sequel, we assume that $\left(\frac{p_{0}(n-2)}{n+2}+p_{0},2\right)$ is non-empty, that $p \in \left(\frac{p_{0}(n-2)}{n+2}+p_{0},2\right)$ and that $\rho \in (0,1)$ and $\kappa \in (0,\infty)$ are as in the preceding lemma.\footnote{Note that if $n=2$ and $p_{0}=1$, then $\left(\frac{p_{0}(n-2)}{n+2}+p_{0},2\right)=(1,2)$ and that $p_{0}=1$ holds particularly if $\gamma$ is bounded from below away from zero. }\\ 
In addition, $(\eta_{m})_{m \in \mathbb{N}}$ and $(\beta_{m})_{m \in \mathbb{N}}$ denote i.i.d. sequences, where $\eta_{m}:\Omega \rightarrow L^{1}(S)$ and $\beta_{m}:\Omega \rightarrow (0,\infty)$ are $\F$-$\B(L^{1}(S))$-measurable and $\F$-$\B((0,\infty))$-measurable, respectively.  Moreover, assume that $(\eta_{m})_{m \in \mathbb{N}}$ and $(\beta_{m})_{m \in \mathbb{N}}$ are independent of each other.  As in the previous section, set $\alpha_{0}:=0$ and
$\alpha_{m}:=\sum\limits_{k =1}\limits^{m}\beta_{k}$ for all $m \in \mathbb{N}$. Moreover, let $x \in \mathcal{M}(\Omega;L^{1}(S))$ be jointly independent of $(\beta_{m})_{m\in \mathbb{N}}$ and $(\eta_{m})_{m\in \mathbb{N}}$; assume that $x \in L^{2}_{0}(S)$ a.s. and $||x||_{L^{2}(S)}^{2\rho}\in L^{1}(\Omega)$. Finally, let $\X_{x}:[0,\infty)\times \Omega \rightarrow L^{1}(S)$, be the process generated by $((\beta_{m})_{m \in \mathbb{N}},(\eta_{m})_{m \in \mathbb{N}},x,\A)$ in $L^{1}(S)$; and let $\overline{x}\in \mathcal{M}(\Omega;L^{1}(S))$ and $\alpha_{e_{\overline{x}}(1)}$ be as in Remark \ref{remark_xbar} and Definition \ref{definition_basic}.v), respectively.\\ 
Now assume that $\eta_{m}\in L^{2}_{0}(S)$ almost surely and that there is a constant $\hat{\varepsilon}>0$ such that\linebreak $\beta_{m}^{11+\hat{\varepsilon}},||\eta_{m}||_{L^{2}(S)}^{\rho(11+\hat{\varepsilon})} \in L^{1}(\Omega)$ and $-\kappa \mathbb{E}\beta_{m}+\mathbb{E}||\eta_{m}||_{L^{2}(S)}^{\rho}<0$.

\begin{theorem} Assume $||\eta_{m}||_{L^{2}(S)}\in L^{4}(\Omega)$ and introduce $\nu:=\frac{1}{\mathbb{E}(\alpha_{e_{\overline{x}}(1)})}\mathbb{E}\left(\int \limits_{0}\limits^{\alpha_{e_{\overline{x}}(1)}}\X_{\overline{x}}(\tau)d\tau\right)$. Then the convergence
	\begin{align*}
	\lim \limits_{t\rightarrow \infty} \frac{1}{t}\int \limits_{0}\limits^{t}\X_{x}(\tau)d\tau= \nu,
	\end{align*}
	takes place almost surely in $(L^{2}(S),||\cdot||_{L^{2}(S)})$. Moreover, we have
	\begin{align*}
	\lim \limits_{t  \rightarrow \infty}\frac{1}{\sqrt{t}}\left(\int \limits_{0}\limits^{t}\X_{x}(\tau)d\tau-t\nu\right)=Z,
	\end{align*}
	in distribution, as elements of  $(L^{2}(S),||\cdot||_{L^{2}(S)})$, where $Z \sim N_{L^{2}(S)}(0,Q)$ and the covariance is given by $Q:=\Cov_{L^{2}(S)}\left(\sqrt{\frac{1}{\mathbb{E}(\alpha_{e_{\overline{x}}(1)})}}\int \limits_{0}\limits^{\alpha_{e_{\overline{x}}(1)}}\X_{\overline{x}}(\tau)-\nu d\tau\right)$.
\end{theorem}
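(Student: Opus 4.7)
The plan is to verify that the hypotheses of the previous section are met with the choice $V:=L^{1}(S)$ and $V_{1}:=V_{2}:=L^{2}_{0}(S)$, and then to read off the conclusion from Corollary \ref{theorem_corvectorvalued}.

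First I would check Assumption \ref{assumption_fe}. Separability of $L^{2}_{0}(S)$ and continuity of the injection $L^{2}_{0}(S)\hookrightarrow L^{1}(S)$ are standard, as $S$ is bounded. Invariance of $L^{2}_{0}(S)$ under $T_{\A}$ follows from Remark \ref{lemma_a11}, which yields both the $L^{2}$-contractivity and the mass-preservation $\overline{(T_{\A}(t)v)}=\overline{(v)}$; so item i) and the contractivity item iii) of Assumption \ref{assumption_fe} are immediate (with the value $\rho=2-p\in(0,1)$ from Lemma \ref{lemma_a12}). The finite-extinction estimate, item ii), is precisely the content of Lemma \ref{lemma_a12}, using the hypothesis $p\in(\tfrac{p_{0}(n-2)}{n+2}+p_{0},2)$ and the constant $\kappa>0$ defined there.

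Next I would verify Assumption \ref{assumption_mb}. Items i), ii) and iii) translate directly: $\eta_{m}\in L^{2}_{0}(S)$ almost surely by hypothesis; the integrability $\mathbb{E}\|\eta_{m}\|_{V_{2}}^{4}=\mathbb{E}\|\eta_{m}\|_{L^{2}(S)}^{4}<\infty$ comes from the additional assumption in the theorem; and the moment bounds $\mathbb{E}\|\eta_{m}\|_{L^{2}(S)}^{\rho(11+\hat\varepsilon)}<\infty$, $\mathbb{E}\beta_{m}^{11+\hat\varepsilon}<\infty$, together with the strict inequality $-\kappa\mathbb{E}\beta_{m}+\mathbb{E}\|\eta_{m}\|_{L^{2}(S)}^{\rho}<0$, are precisely the standing assumptions stated just before the theorem. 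Likewise, the hypotheses imposed on $x$ — measurability into $L^{1}(S)$, taking values in $L^{2}_{0}(S)$ a.s., square-integrability of $\|x\|_{L^{2}(S)}^{\rho}$, and joint independence of $(\beta_{m})$ and $(\eta_{m})$ — are exactly the requirements making $x$ an independent initial value leading to extinction in the sense of Definition \ref{definition_basic}.

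Finally, I would observe that $(V_{2},\|\cdot\|_{V_{2}})=(L^{2}_{0}(S),\|\cdot\|_{L^{2}(S)})$ is a Hilbert space and therefore of type $2$. At this point Corollary \ref{theorem_corvectorvalued}.i) directly yields the SLLN claim
\[
\lim_{t\rightarrow\infty}\frac{1}{t}\int_{0}^{t}\X_{x}(\tau)\,d\tau=\nu,
\]
almost surely in $L^{2}_{0}(S)$, which coincides with almost sure convergence in $L^{2}(S)$ since $\X_{x}(\tau)$, and hence the Bochner integral, remains in the closed subspace $L^{2}_{0}(S)$ with probability one (by Lemma \ref{lemma_Xbasicprop}.i)). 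Similarly, Corollary \ref{theorem_corvectorvalued}.iv) supplies the CLT with the covariance given in the statement, viewed in $L^{2}(S)$ via the isometric inclusion $L^{2}_{0}(S)\hookrightarrow L^{2}(S)$.

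There is no real obstacle here; the only thing to be careful about is the distinction between $L^{2}(S)$ (as written in the theorem) and $L^{2}_{0}(S)$ (the natural invariant space). This is resolved by the mass-preservation property of $T_{\A}$ together with the hypothesis that both $x$ and all $\eta_{m}$ lie in $L^{2}_{0}(S)$ a.s., so every random variable in sight almost surely takes values in the closed subspace $L^{2}_{0}(S)$ of $L^{2}(S)$, and convergence in the two norms coincides.
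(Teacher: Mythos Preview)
Your proposal is correct and follows essentially the same route as the paper: verify Assumptions~\ref{assumption_fe} and~\ref{assumption_mb} via Remark~\ref{lemma_a11} and Lemma~\ref{lemma_a12}, then invoke Corollary~\ref{theorem_corvectorvalued}. The only difference is that the paper chooses $V_{2}=L^{2}(S)$ rather than $V_{2}=L^{2}_{0}(S)$; this makes the conclusion in $(L^{2}(S),\|\cdot\|_{L^{2}(S)})$ immediate and avoids the isometric-inclusion step you added, but your choice works equally well and your handling of the passage from $L^{2}_{0}(S)$ to $L^{2}(S)$ is sound.
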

\begin{proof} This follows from Corollary \ref{theorem_corvectorvalued}, more precisely: Choose $V=L^{1}(S)$, $V_{1}=L^{2}_{0}(S)$ and $V_{2}=L^{2}(S)$, then combining Remark \ref{lemma_a11} and Lemma \ref{lemma_a12} yield Assumption \ref{assumption_fe}. Moreover, Assumption \ref{assumption_mb} holds by construction. Finally, it is well known that $L^{2}(S)$ is a type $2$ Banach space, see \cite[Theorem 3.4]{CLT}.
\end{proof}

\begin{theorem} Let $q \in [1,\infty)$ be given. Moreover, assume $x,\eta_{m} \in L^{q}(S)$ a.s. and $||\eta_{m}||_{L^{q}(S)}\in L^{4}(\Omega)$ and introduce $\nu:=\frac{1}{\mathbb{E}(\alpha_{e_{\overline{x}}(1)})}\mathbb{E}\left(\int \limits_{0}\limits^{\alpha_{e_{\overline{x}}(1)}}||\X_{\overline{x}}(\tau)||_{L^{q}(S)}d\tau\right)$. Then the convergence
	\begin{align*}
	\lim \limits_{t\rightarrow \infty} \frac{1}{t}\int \limits_{0}\limits^{t}||\X_{x}(\tau)||_{L^{q}(S)}d\tau= \nu,
	\end{align*}
	takes place almost surely. Moreover, 
	\begin{align*}
	\lim \limits_{t  \rightarrow \infty}\frac{1}{\sqrt{t}}\left(\int \limits_{0}\limits^{t}||\X_{x}(\tau)||_{L^{q}(S)}d\tau-t\nu\right)=Z
	\end{align*}
	in distribution, where $Z \sim N(0,\sigma^{2})$ and $\sigma^{2} \in [0,\infty)$ is given by
	\begin{align*}
	\sigma^{2}:= \frac{1}{\mathbb{E}(\alpha_{e_{\overline{x}}(1)})}\mathbb{E}\left(\int \limits_{0}\limits^{\alpha_{e_{\overline{x}}(1)}}||\X_{\overline{x}}(\tau)||_{L^{q}(S)}-\nu d\tau\right)^{2}.
	\end{align*}
\end{theorem}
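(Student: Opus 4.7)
The plan is to reduce everything directly to Corollary \ref{theorem_corvectorvalued}, specifically to parts ii) and iii), by making the choices $V := L^{1}(S)$, $V_{1} := L^{2}_{0}(S)$ and, crucially, $V_{2} := L^{q}(S)$. Unlike in the preceding theorem (which used $V_2 = L^{2}(S)$), this choice of $V_{2}$ is the one that produces the desired $L^{q}$-norm inside the integral. Note that Corollary \ref{theorem_corvectorvalued}.ii)--iii) do not require $V_{2}$ to be of type $2$, so the only thing to be checked is that Assumptions \ref{assumption_fe} and \ref{assumption_mb} hold and that $x$ is an independent initial leading to extinction under these choices.

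First I would verify Assumption \ref{assumption_fe}. Continuity of the injections $L^{2}_{0}(S)\hookrightarrow L^{1}(S)$ and $L^{q}(S)\hookrightarrow L^{1}(S)$ (for bounded $S$) is standard. Invariance of both $L^{2}_{0}(S)$ and $L^{q}(S)$ under $T_{\A}$, as well as the $L^{q}$-contractivity $\|T_{\A}(t)v\|_{L^{q}(S)}\leq \|v\|_{L^{q}(S)}$, are precisely what is recorded in Remark \ref{lemma_a11}. The finite-extinction estimate in $V_{1} = L^{2}_{0}(S)$ with exponent $\rho = 2-p$ and constant $\kappa$ is exactly the content of Lemma \ref{lemma_a12}.

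Next I would check Assumption \ref{assumption_mb}. The standing assumptions of this section already give $\mathbb{E}\|\eta_m\|_{L^{2}(S)}^{\rho(11+\hat{\varepsilon})}<\infty$ and $\mathbb{E}\beta_m^{11+\hat{\varepsilon}}<\infty$ as well as $-\kappa \mathbb{E}\beta_m + \mathbb{E}\|\eta_m\|_{L^{2}(S)}^{\rho}<0$. The fourth moment condition on $\|\eta_m\|_{V_{2}} = \|\eta_m\|_{L^{q}(S)}$ is the new hypothesis of this theorem. Membership $\eta_m \in L^{2}_{0}(S)$ a.s. is given, and $\eta_m \in L^{q}(S)$ a.s. is the additional hypothesis. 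Analogously, $x$ is $\F$-$\B(L^{1}(S))$-measurable, jointly independent of $(\beta_m)_m,(\eta_m)_m$, lies in $L^{2}_{0}(S)\cap L^{q}(S)$ almost surely, and satisfies $\mathbb{E}\|x\|_{L^{2}(S)}^{2\rho}<\infty$, so $x$ qualifies as an independent initial leading to extinction. By Remark \ref{remark_xbar} the same then holds for $\overline{x}$ (here one uses $\eta_1 \in L^{q}(S)$ a.s.\ to get $\overline{x}\in V_{2}$ a.s.), which makes the definition of $\nu$ meaningful via Corollary \ref{lemma_alphaiid} and Lemma \ref{lemma_squareintegrability} applied to the sub-linear functional $v\mapsto \|v\|_{L^{q}(S)}\id_{V_{2}}(v)$.

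With all hypotheses verified, the SLLN statement follows from Corollary \ref{theorem_corvectorvalued}.ii) and the CLT from Corollary \ref{theorem_corvectorvalued}.iii). There is no real obstacle here; the only subtle point worth flagging is simply that one must pick $V_{1}$ and $V_{2}$ independently of each other (rather than trying to set $V_{1}=V_{2}=L^{q}_{0}(S)$), because the finite-extinction inequality of Lemma \ref{lemma_a12} is available only in the $L^{2}$-norm on $L^{2}_{0}(S)$, whereas the contractivity used for Assumption \ref{assumption_fe}.iii) holds in every $L^{q}$-norm. This is precisely the flexibility for which the two auxiliary spaces $V_{1}, V_{2}$ were introduced in Section \ref{section_asymptotics}.
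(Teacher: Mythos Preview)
Your proposal is correct and follows exactly the paper's own approach: the paper's proof is the single sentence ``Analogously, all claims follow from Corollary \ref{theorem_corvectorvalued}, by choosing $V=L^{1}(S)$, $V_{1}=L^{2}_{0}(S)$ and $V_{2}=L^{q}(S)$.'' Your additional detail in verifying Assumptions \ref{assumption_fe} and \ref{assumption_mb} and the conditions on $x$ is accurate and makes explicit what the paper leaves to the reader.
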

\begin{proof} Analogously, all claims follow from Corollary \ref{theorem_corvectorvalued}, by choosing $V=L^{1}(S)$, $V_{1}=L^{2}_{0}(S)$ and $V_{2}=L^{q}(S)$.
\end{proof}

\begin{center}
	\textsc{Acknowledgment}
\end{center}
The present author is grateful to Prof. Dr. Alexei Kulik for fruitful conversations during a research stay of the present author at Technische Universit\"at Berlin.


\end{document}